\documentclass{article}
\usepackage{graphicx} % Required for inserting images

\usepackage{amsfonts}
\usepackage{amstext}

\usepackage{stmaryrd}
\usepackage{amsthm}
\usepackage{amsmath}

\usepackage{mathrsfs}

\usepackage{graphics}

\usepackage{mathtools}
\usepackage{amssymb}
\usepackage{commath}
\usepackage{bigints}
\usepackage{romanbar}
\usepackage{enumerate}
\usepackage{etaremune}
\usepackage{multirow}
\usepackage{graphicx}

\usepackage{tikz}

\usepackage{hyperref}
\usepackage{fullpage}

\usepackage{algorithm}
\usepackage{algpseudocode}

\usepackage{comment}

\usepackage[shortlabels]{enumitem}

\usepackage{color}
\usepackage{todonotes}

\DeclareMathOperator*{\argmin}{arg\,min}

\newcommand{\vmeas}{ {\boldsymbol{w}} }
\newcommand{\vx}{\mathbf{x}}

\newcommand{\vz}{\mathbf{z}}
\newcommand{\vu}{\mathbf{u}}

\newcommand{\vv}{\mathbf{v}}
\newcommand{\vw}{\mathbf{w}}
\newcommand{\vn}{\mathbf{n}}
\newcommand{\vf}{\mathbf{f}}
\newcommand{\vg}{\mathbf{g}}

\newcommand{\valpha}{\boldsymbol{\alpha}}
\newcommand{\vlambda}{\boldsymbol{\lambda}}
\newcommand{\vvarphi}{\boldsymbol{\varphi}}
\newcommand{\vpsi}{\boldsymbol{\psi}}
\newcommand{\veta}{\boldsymbol{\eta}}
\newcommand{\vpi}{\boldsymbol{\pi}}

\newcommand{\veps}{\boldsymbol{\varepsilon}}
\newcommand{\vphi}{\boldsymbol{\phi}}

\newcommand{\vzeta}{\boldsymbol{\zeta}}

\newcommand{\vdiv}{\textbf{div}}

\newcommand{\cS}{\mathcal{S}}
\newcommand{\cXs}{\mathcal{X}^s}
\newcommand{\cXsz}{\mathcal{X}_0^s}
\newcommand{\cKs}{\mathcal{K}^s}
\newcommand{\Bd}{\Gamma} % {\partial\Omega} %% boundary

\newcommand{\mean}{\text{mean}} % average/mean value

\theoremstyle{plain}% default
\newtheorem{theorem}{Theorem}[section]

\newtheorem{remark}{Remark}[section]

\title{A near-optimal recovery algorithm for the Stokes equations with incomplete information on the boundary conditions
\thanks{This research was supported in part by the NSF Grant DMS-2409807 (AB) and the NSERC Discovery Grant RGPIN-2021-04311 (DG).}}

\author{Andrea Bonito$^1$ and Diane Guignard$^2$ \\
\small
$^1$Texas A\&M University, Department of Mathematics, College Station, TX 77843, USA (bonito@tamu.edu) \\
\small
$^2$Department of Mathematics and Statistics, University of Ottawa, Canada (dguignar@uottawa.ca)}

\date{\today}

\begin{document}

\maketitle

\begin{abstract}
We address the problem of numerically approximating the velocity and pressure governed by the Stokes system when the boundary conditions are only partially known and thus do not uniquely determine the velocity-pressure couple. We propose an algorithm that takes advantage of available linear measurements of the velocity and pressure to construct a numerical approximation. This approximation is guaranteed to be near-optimal in the sense that it approximates the velocity-pressure couple that minimizes, in the energy norm, the distance to all other solutions satisfying the measurements and the Stokes system.
\end{abstract}

\section{Introduction} \label{sec:intro}

We consider the Stokes system 
\begin{equation}\label{e:Stokes}
-\vdiv(\nabla \vu + \nabla \vu ^T) + \nabla p = \vf \qquad \textrm{and}\qquad \textrm{div}(\vu) = 0 \qquad \textrm{in }\Omega
\end{equation}
relating the velocity $\vu:\Omega \rightarrow \mathbb R^d$ with the pressure $p:\Omega \rightarrow \mathbb R$ of a fluid in a bounded domain $\Omega \subset \mathbb R^{d}$, $d=2,3$, with Lipschitz boundary $\partial \Omega$. 
Here, $\vf:\Omega \rightarrow \mathbb R^3$ is a given force applied to the fluid. Typically, the Stokes system is supplemented by boundary conditions on $\partial \Omega$ that either set the velocity, the force $(\nabla \vu + \nabla \vu ^T){\vn} -p{\boldsymbol \vn}$, where $\vn$ stands for the outward pointing normal to $\Omega$, or a combination of both. We refer for instance to \cite{temam2024navier,galdi2011introduction,sohr2012navier,bernardi2024mathematics} for details.

Instead, this work is concerned with situations where the type of boundary condition or its value is (partially) unknown. % not known, or only partially.
Missing boundary conditions in complex systems are the result of a lack of understanding of the physics \cite{raj,richards2019appropriate}, the inaccessibility of the actual value to impose at the boundary \cite{grinberg2008outflow}, or the modification/truncations of the physical domain to design efficient numerical approximations \cite{formaggia2002numerical,richards2011appropriate}.
We refer in particular to \cite{xu2021explore,brunton2020machine,duraisamy2019turbulence} for examples where boundary conditions are missing in fluid dynamics, for which the Stokes system considered here is a prototype.

For the moment, consider the case where the velocity is known on $\partial \Omega$, say
\begin{equation}\label{e:dirichlet}
\vu = \vg \qquad \textrm{on } \partial \Omega
\end{equation}
for some $\vg: \partial \Omega \rightarrow \mathbb R^d$ satisfying $\int_{\partial \Omega} \vg \cdot \vn=0$.
Standard results, see Theorem~I.5.1 and Remarks~I.5.1 in \cite{GR86}, guarantee that for $\vg \in H^{\frac{1}{2}}(\partial \Omega)$ and $\vf \in H^{-1}(\Omega)^d$, there exists a unique solution $(\vu(\vg),p(\vg))\in X$with $\int_{\Omega}p(\vg)=0$, where
$$
X:=\left\{ (\vv,q) \in H^1(\Omega)^d \times L^2(\Omega) \ : \ \int_{\partial \Omega} \vv \cdot \vn = 0\right\}
$$
is equipped with the graph norm; see Section~\ref{ss:Prelim} for the definition of these spaces.
The velocity-pair is also unique when restricting the mean value of the pressure $|\Omega|^{-1}\int_\Omega p = P$ for some given $P\in \mathbb R$.
In that case, the data $(\vg,P)$ and the solution $(\vu(\vg,P),p(\vg,P))\in X$ satisfy
\begin{equation}\label{e:a-priori}
\|\vu(\vg,P),p(\vg,P) \|_X \leq C \left(\| \vf \|_{H^{-1}(\Omega)} + \| \vg \|_{H^{\frac{1}{2}}(\partial \Omega)}+|P|\right)
\end{equation}
for some constant $C$ only depending on $\Omega$; refer to Remarks~I.5.1 and~I.5.3 in \cite{GR86}.
    
In contrast, the velocity and pressure that satisfy the Stokes system \eqref{e:Stokes} cannot be uniquely determined without a boundary condition. 
To alleviate some of this uncertainty, we propose a recovery algorithm that relies on a model class assumption, or prior, to construct the ``best'' velocity-pressure as defined below. It consists of restricting our search for velocities-pressures such that for some $s>1/2$ and some $C_\textrm{data}>0$
\begin{equation}\label{e:model}
(\vu,p) \in K:= \left\{ (\vv,q) \in X \ : \ (\vv,q) \quad \textrm{satisfies }\eqref{e:Stokes}, \ \| \vv \|_{H^s(\partial \Omega)} +\left||\Omega|^{-1}\int_\Omega q\right|\leq C_\textrm{data} \right\}.
\end{equation}
Such velocities possess slightly more smoothness than the standard boundary regularity $H^{\frac{1}{2}}(\partial \Omega)$. However, we observe that, in any case, higher regularity is necessary for any algorithm to construct a meaningful approximation of the exact velocity. We also emphasize that it is not assumed that the target solution of \eqref{e:Stokes} satisfies any specific type of boundary condition and that although the pressures in $K$ must satisfy $|\Omega|^{-1} \int_\Omega p \leq C_\textrm{data}$, the mean value of the pressure is not prescribed and therefore becomes part of the recovery process.

In addition, we assume that measurements $\omega_i:=\lambda_i(\vu,p)\in \mathbb R$, $i=1,\ldots,m$, of some $(\vu,p)$ satisfying \eqref{e:Stokes} and~\eqref{e:model} are available, where the $\lambda_i$ are fixed measurement functionals on $X$. 
We then gather all the information about the function to be recovered in
\[
K^s_{\vmeas} := \{ (\vv,q) \in K \ :  \ \lambda_i(\vu,q)=\omega_i, i=1,...,m\}. 
\]
The set $K^s_{\vmeas}$ is generally not reduced to one element (the solution to recover).
In fact, any element in $K^s_{\vmeas}$ is a possible recovery for $(\vu,p)$.
We say that $(\vu^*,p^*)$ is the best recovery in $K^s_\vmeas$ if
\begin{equation}\label{e:opt}
\sup_{(\vv,q) \in K^s_{\vmeas}} \| (\vv,q)-(\vu^*,p^*)\|_X \leq \inf_{(\vu,p)\in X} \sup_{(\vv,q) \in K^s_{\vmeas}} \| (\vv,q) - (\vu,p) \|_X=:R(K^s_\vmeas)_X=:R(K^s_\vmeas).
\end{equation}
Geometrically, $(\vu^*,p^*)$ is the center of the smallest ball in $X$ (with radius $R(K^s_\vmeas)$) containing $K^s_{\vmeas}$. It is called the Chebyshev center of $K^s_{\vmeas}$ and $R(K^s_\vmeas)$ is the Chebyshev radius \cite{bojanov1994optimal, micchelli1985lectures, micchelli1976optimal, traub1973theory}. The latter serves as a benchmark for recovery algorithms. 

When $\vf=\mathbf{0}$, the Chebyshev center $(\vu^*,p^*)$ in the recovery setting considered here is characterized as element in $K^s_{\vmeas}$ whose norm is minimal \cite{micchelli1985lectures, micchelli1976optimal}. In particular, $K^s_\vmeas = K \cap \textrm{span}(\vphi_1,...,\vphi_m)$, where $\vphi_j$ are appropriate Riesz representers of the measurement functionals $\lambda_j$. They are the solutions to the Stokes system \eqref{e:Stokes} with $\vf=\mathbf{0}$ and satisfy  a fractional partial differential equation on $\partial \Omega$; refer to Section~\ref{sec:algo} for more details. Hence, the Chebyshev center satisfies
\begin{equation}\label{e:rieszrep}
(\vu^*,p^*)= \sum_{j=1}^m \alpha_j^* \vphi_j,
\end{equation}
for some $\valpha^*=(\alpha_j^*)\in\mathbb{R}^m$ determined so that $(\vu^*,p^*)$ satisfies the measurements.
The general case where $\vf \not = \mathbf{0}$ is obtained by shifting expression \eqref{e:rieszrep} by the solution to \eqref{e:Stokes} supplemented by vanishing Dirichlet condition.

The minimal norm property is exploited in \cite{batlle2025error} to design a kernel method algorithm for the approximation of an optimal recovery of general, possibly nonlinear, partial differential equations.
In \cite{BBCDDP2023} and later in \cite{BG2024,BDS2025}, the representation~\eqref{e:rieszrep} is advocated to design near-optimal algorithms for the Poisson problem.
In these works, fully practical and near-optimal algorithms based on the approximation of the Riesz representers are proposed.
The challenges that arise in the Stokes system, compared to the Poisson problem, stem from the added complexity and efficiency requirements to determine the Riesz representer $\vphi_j$, the inherent nontrivial compatibility conditions that admissible boundary data must satisfy, and the need to recover the pressure average within the recovery process.

Inspired by \cite{BBCDDP2023,BG2024}, we consider an algorithm that constructs Riesz representers that are within a tolerance $\varepsilon>0$ in $H^1(\Omega)^d\times L^2(\Omega)$ of the exact Riesz representer. We show that using these approximate Riesz representers in \eqref{e:rieszrep} yields $(\vu^\varepsilon,p^\varepsilon)$ such that
\begin{equation}\label{e:u_near_opt}
\sup_{(\vv,q) \in K^s_{\vmeas}} \| (\vv,q)-(\vu^\varepsilon,p^\varepsilon)\|_X \leq C(\varepsilon) R(K^s_\vmeas),
\end{equation}
where $C(\varepsilon) \to 1^+$ as $\varepsilon \to 0^+$. 
In that case, we say that the algorithm $A:\varepsilon \mapsto (\vu^\varepsilon,p^\varepsilon)$ is near-optimal.

We further address the question of near‑optimal performance for quantities of interest $Q:X \rightarrow \mathbb R$ such as drag or lift. In particular, we show that the algorithm $A_Q:\varepsilon \mapsto Q(\vu^\varepsilon,p^\varepsilon)\in \mathbb R$ is near‑optimal for the quantity of interest, that is, 
\begin{equation}\label{e:Q_near_opt_intro}
\sup_{(\vv,q) \in K^s_\vmeas} |Q(\vv,q)-Q(\vu^\varepsilon,p^\varepsilon)| \leq C(\varepsilon) \inf_{l \in \mathbb R} \sup_{(\vv,q) \in K^s_\vmeas} |Q(\vv,q)-l|,
\end{equation}
where again $C(\varepsilon)\to 1^+$ as $\varepsilon \to 0$. 

A concrete/practical realization of the algorithm is then obtained by approximating the Riesz representers using a finite element method. For each $\vphi_j$ it consists of three successive steps. We employ Lagrange multipliers to ensure that $(\vw_j,r_j):=\vphi_j$ satisfies \eqref{e:Stokes} in $\Omega$ and the first step of the algorithm consists of approximating a Stokes-like system to determine the approximate Lagrange multipliers. Then, the solution to a fractional partial differential equation on $\partial\Omega$ is approximated to determine the approximation of $\vw_j|_{\partial\Omega}$. In the last step, the approximation of $\vphi_j$ inside $\Omega$ is determined solving again a Stokes-like system in $\Omega$.

In Section~\ref{sec:ORP}, we describe in more detail the concept of an optimal algorithm in the present context and present in Section~\ref{sec:algo} the analysis of conceptual near-optimal algorithms, i.e., producing $(\vu^\varepsilon,p^\varepsilon)$ satisfying \eqref{e:u_near_opt} and \eqref{e:Q_near_opt_intro}. 
A finite element method for the approximation of the Riesz representers is described in Section~\ref{sec:Riesz} followed by numerical illustrations of the performance of the practical algorithms in Section~\ref{sec:numeric}.
But before we start with some preliminaries and notation. 

\subsection{Preliminaries and notations}\label{ss:Prelim}

Let $\Omega\subset\mathbb{R}^d$, $d\in\{2,3\}$, be a bounded domain with Lipschitz boundary $\Gamma:=\partial\Omega$ and let $\mathcal R=\Omega$ or $\mathcal R=\Gamma$.

We denote by  $L^q(\mathcal R)$,  $1\leq q \leq \infty$, the standard Lebesgue spaces of measurable functions. When $q=2$ the space of square integrable functions $L^2(\mathcal R)$ is a Hilbert space with the inner product and associated norm
$$
\langle v,w\rangle_{L^2(\mathcal R)}:=\int_{\mathcal R} vw \quad \text{and} \quad \| v\|_{L^2(\mathcal R)}:= \sqrt{\langle v,v\rangle_{L^2(\mathcal R)}}, \qquad v,w \in L^2(\mathcal R).
$$
For a function $q\in L^1(\Omega)$, we use $\bar q$ to denote its mean (average) defined by
\begin{equation} \label{eq:avrg}
    \bar q:=\mean(q):=\frac{1}{|\Omega|}\int_{\Omega}q.
\end{equation}
Moreover, we introduce the space $L_0^2(\Omega):=\{q\in L^2(\Omega): \,\, \mean(q)=0\}$ of square integrable functions with mean value zero.  

We equip the Sobolev space $H^1(\mathcal R)$ with the standard norm $\|\cdot\|_{H^1(\mathcal R)}$ induced by the inner product
\begin{equation} \label{def:H1_IP}
  \langle v,w\rangle_{H^1(\mathcal R)}:=\int_{\mathcal R}vw+\int_{\mathcal R}\nabla_{\mathcal R} v\cdot\nabla_{\mathcal R} w, \quad v,w\in H^1(\mathcal R), 
\end{equation}
where $\nabla_\Omega:=\nabla$ and $\nabla_{\Gamma}$ is the tangential gradient to $\Gamma$.
We denote by $H^1_0(\Omega)$ the space of $H^1(\Omega)$ functions whose trace vanishes on $\Gamma$, and by $H^{-1}(\Omega)$ the dual of $H_0^1(\Omega)$. The $H^{-1}(\Omega)-H^1_0(\Omega)$ duality product is denoted $\langle \cdot,\cdot\rangle$.

We also need the fractional (interpolation) Sobolev space $H^s(\Gamma)$, $0<s<1$, which are Hilbert spaces with the inner product and associated norm defined for $v,w\in H^s(\Gamma)$ by
\begin{equation} \label{def:Hs}
    \langle v,w\rangle_{H^s(\Gamma)}:= \langle (I-\Delta_{\Gamma})^{s/2}v,(I-\Delta_{\Gamma})^{s/2}w\rangle_{L^2(\Gamma)} \quad \text{and} \quad \|v\|_{H^s(\Gamma)}:=\sqrt{\langle v,v\rangle_{H^s(\Gamma)}}.
    \end{equation}
Here $(I-\Delta_{\Gamma})^{s/2}$ are the fractional powers of the operator $I-\Delta_{\Gamma}$ \cite{lunardi2018interpolation} with $I$ the identity and $\Delta_{\Gamma}$ the Laplace--Beltrami operator. This special choice of inner products, which are equivalent to the standard Slobodeckij inner products, is preferred to facilitate their approximations needed by the proposed recovery algorithm; see Section~2.2 in \cite{BG2024} for more details. We recall that there is $r^*>1$ depending on the regularity of $\Omega$ such that 
\begin{equation}\label{e:r_star}
\| v|_{\Gamma}\|_{H^{\frac{1}{2}+r}(\Gamma)} \leq C\| v \|_{H^{1+r}(\Omega)}, \qquad 1\leq r\leq r^*,
\end{equation}
for some constant $C$ only depending on $\Omega$ and where $v|_{\Gamma}$ denotes the (extension to $H^{1}(\Omega)$ of the) trace operator. When confusion is not possible, we will simply write $v$ for the trace of $v$. 

The inner products and norms of scalar-valued functions are extended to vector-valued functions by a component-wise application. More precisely, if $H$ is a Hilbert space with inner product $\langle\cdot,\cdot\rangle_H$ and associated norm $\|\cdot\|_H$, then for $\vv=(v_1,...,v_d),\vw=(w_1,...w_d)\in H^d$ we define
\begin{equation} \label{def:Multivar}
\langle\vv,\vw\rangle_H:=\sum_{i=1}^d\langle v_i,w_i\rangle_H \quad \text{and} \quad \|\vv\|_H:=\sqrt{\sum_{i=1}^d\|v_i\|_H^2}.
\end{equation}
In particular, we recall that we defined the functional space for the velocity-pressure couple by
\begin{equation} \label{def:SpaceX}
    X = \left\{ (\vv,q) \in H^1(\Omega)^d\times L^2(\Omega) \ : \
\int_{\Gamma} \vv \cdot \vn = 0 \right\}
\end{equation}
with $\|(\vv,q)\|_X:=\sqrt{\|\vv\|_{H^1(\Omega)}^2+\|q\|_{L^2(\Omega)}^2}$.

Similarly, an operator acting on a scalar-valued function will be extended to vector-valued functions by component-wise application. 

We end this section with some notational conventions. Given a functional $F:X\rightarrow\mathbb{R}$, we will often write $F(\vv,q):=F\big((\vv,q)\big)$ for $(\vv,q)\in X$ and similarly $\| \vv,q\|_X:= \| (\vv,q)\|_X$. 

\section{Optimal recovery problem} \label{sec:ORP}

We assume that the boundary conditions on $\Gamma$ are not known. This could reflect a lack of knowledge of the boundary data, but also of the actual physics, namely, the type of boundary conditions.
To simplify the presentation, we suppose for now that no information is available on boundary conditions and postpone the discussion of partially known boundary data to Section~\ref{sec:partial}. Not knowing the boundary conditions means \eqref{e:Stokes} does not uniquely determine $\vu$ and $p$.
This lack of knowledge is alleviated by the following assumptions on the velocity-pressure $(\vu^\textrm{ex},p^\textrm{ex})$ to recover. We recall that $r^*$ is defined in \eqref{e:r_star}.

\begin{enumerate}[label={(H\arabic*)}]
    \item \label{h:reg} \textbf{Regularity}: The boundary trace of the velocity satisfies $\vu^\textrm{ex}|_\Gamma \in H^s(\Bd)^d$ for some $\frac 1 2 < s \leq r^*$;
    $$
    \| \vu^{\textrm{ex}}\|_{H^s(\Bd)}+ | \mean(p^{\textrm{ex}}) | \leq C_\textrm{data}.
    $$
    \item \textbf{Measurements}: We have access to measurements of $(\vu^\textrm{ex},p^\textrm{ex})$, namely we know the measurement vector
$$\vmeas:=\vlambda(\vu^\textrm{ex},p^\textrm{ex}):=\Big(\lambda_1(\vu^\textrm{ex},p^\textrm{ex}),\lambda_2(\vu^\textrm{ex},p^\textrm{ex}),\ldots,\lambda_m(\vu^\textrm{ex},p^\textrm{ex})\Big)\in\mathbb{R}^m,$$
where $\lambda_j$, $j=1,...,m$, are linearly independent functionals on $X$.
\end{enumerate}

We note that the regularity and boundedness assumptions \ref{h:reg} are on the trace of the velocity and the pressure average. This indicates that we chose to recover these quantities from the measurements. An alternative approach would be, for example, to recover the boundary force $ (2\veps(\vu^\textrm{ex})-p^\textrm{ex}I)\vn$ in $H^{-\frac{1}{2}}(\Gamma)$ (the dual of $H^{\frac{1}{2}}(\Gamma)$) along with the average velocity, but this approach is less favorable to the design of a practical recovery algorithm because it would involve negative norms on $\Gamma$.

We also anticipate that the recovery algorithm discussed in Section~\ref{sec:algo} explicitly uses the value of $s$ but not that of $C_\textrm{data}$. This explains why only the dependence on $s$ is indicated for the set gathering all the a‑priori knowledge:
$$
K^s_{\vmeas}:=\{(\vu,p)\in K^s \ : \,\, \vlambda(\vu,p)=\vmeas\},
$$
where
$$
K^s:=\{(\vu,p)\in X: \, (\vu,p)\text{ satisfies } \eqref{e:Stokes} \textrm{ and } \| \vu\|_{H^s(\Bd)} +|\mean(p)|\leq C_{\textrm{data}}\}
$$
is the so-called model class and is compact in $X$.

The \emph{optimal recovery problem} consists of determining the best representative $(\vu^*,p^*)\in X$ for all elements of $K_{\vmeas}^s$, i.e.,
\begin{equation} \label{eqn:up_star}
    (\vu^*,p^*)\in \argmin_{(\vu,p)\in X} \sup_{(\vv,q)\in K_{\vmeas}^s}\|(\vv,q)-(\vu,p)\|_X.
\end{equation}

\subsection{Decomposition of functions in $X$ and associated recovery problem} \label{subsec:splitting}

We define the Stokes operator $\mathcal S: X \rightarrow H^{-1}(\Omega)^d \times \mathbb R$ for $(\vv,q)\in X$ by  
\begin{equation*}
\cS(\vv,q):=  \left(
\begin{array}{c}
   -\vdiv(2\veps(\vv))+\nabla q \\
   {\rm div}(\vv)
\end{array}
\right),  \quad  \veps(\vv):= \frac 1 2 (\nabla \vv + (\nabla \vv)^T).
\end{equation*}
Given $\vf \in H^{-1}(\Omega)^d$, we consider velocity-pressure couples $(\vu,p)\in X$ satisfying
\begin{equation} \label{def:pb1}
		\cS(\vu,p)=\begin{pmatrix}
		    \vf \\
            0
		\end{pmatrix}
\end{equation}
in the weak sense, that is 
$$
\langle 2\veps( \vu),  \veps(\vv)\rangle_{L^2(\Omega)} - \langle p,\textrm{div}(\vv)\rangle_{L^2(\Omega)} - \langle q,\textrm{div}(\vu)\rangle_{L^2(\Omega)} = \langle \vf, \vv \rangle, \qquad \forall\, (\vv,q) \in H^1_0(\Omega)^d \times L^2_0(\Omega).
$$
We recall that Theorem~5.1 in \cite{GR86}, see also Remarks~5.1 and~5.3 in \cite{GR86}, guarantees the existence and uniqueness of $(\vu_\vf,p_\vf)\in  X \cap \left(H^1_0(\Omega)^d\times L^2_0(\Omega)\right)$ satisfying~\eqref{def:pb1}.
Therefore, it will be convenient to decompose any couple $(\vu,p) \in X$ satisfying \eqref{def:pb1} into two parts and introduce
\begin{equation}\label{def:splitting}
(\vu_{\cS},p_{\cS}) := (\vu,p) - (\vu_{\vf},p_{\vf}).
\end{equation}
Since $\vf$ is known, $(\vu_\vf,p_\vf)$ is uniquely determined and the challenge is to recover $(\vu_{\cS},p_{\cS})\in X$. Notice that the latter satisfies $\mathcal S(\vu_{\cS},p_{\cS})= \bf{0}$ and that if $(\vu,p) \in K^s$, we also have $\|\vu_{\cS}\|_{H^s(\Gamma)} + |\mean(p_{\cS})| \leq C_\textrm{data}$.
In other words,  $(\vu,p)\in K^s$ if and only if $(\vu,p)=(\vu_\vf,p_\vf)+(\vu_\cS,p_\cS)$ for some $(\vu_\cS,p_\cS) \in \cKs$, where
\begin{equation} \label{def:cKs}
    \cKs:=\{(\vv,q)\in \cXs: \, \|\vv \|_{H^s(\Gamma)}+ | \mean(q)| \le C_\textrm{data} \}
\end{equation}
with
\begin{equation} \label{def:space_Xs}
  \cXs:=\left\{(\vv,q)\in X: \,\, \cS(\vv,q)=\mathbf{0} \quad \textrm{and} \quad \vv|_\Gamma \in H^s(\Gamma)^d\right\}\subset X.  
\end{equation}

This indicates that the recovery of $(\vu,p)\in K^s$ is equivalent to the recovery of $(\vu_\cS,p_\cS)\in \cKs$ provided measurements of the latter are accessible. We take advantage of the linearity of the measurements to define
$\vmeas_{\cS}:=\vmeas-\vlambda(\vu_{\vf},p_{\vf})$ so that
$\vmeas_{\cS}=\vlambda(\vu_{\cS},p_{\cS})$ 
are the desired measurements of $(\vu_{\cS},p_{\cS})$. This leads to the definition of 
$$
\mathcal{K}^s_{\vmeas_{\cS}}:=\{ (\vu,p)\in \cKs \ : \ \vlambda(\vu,p) = \vmeas_{\cS}\}.
$$

We are now in a position to restate the optimal recovery problem in terms of $\cKs_{\vmeas_{\cS}}$: given $s>1/2$ and $\vmeas\in\mathbb{R}^m$, find 
\begin{equation} \label{eqn:up_star_S}
    (\vu_{\cS}^*,p_{\cS}^*)\in \argmin_{(\vu,p)\in X}\sup_{(\vv,q)\in \cKs_{\vmeas_{\cS}}}\|(\vv,q)-(\vu,p)\|_X
\end{equation}
and set $(\vu^*,p^*):=(\vu_{\vf},p_{\vf})+(\vu_{\cS}^*,p_{\cS}^*)$ to obtain \eqref{eqn:up_star}.

We end this section noting that thanks to \eqref{e:a-priori} with $\vf=\bf{0}$, the functional space $\cXs$ is a Hilbert space when equipped with the inner product and the associated norm
\begin{equation} \label{eqn:IP_Xs}
\langle(\vv,q),(\vu,p) \rangle_{\cXs} := \langle \vv,\vu\rangle_{H^s(\Gamma)}+\mean(q)\mean(p), \qquad \|\vv,q\|_{\cXs}:=\sqrt{\langle(\vv,q),(\vv,q) \rangle_{\cXs}}.
\end{equation}
Moreover, from the a-priori estimate \eqref{e:a-priori} and the continuous embedding 
$H^{s}(\Gamma)\subset H^{\frac{1}{2}}(\Gamma)$, we infer the existence of some constant $C_s>0$ depending only on $\Omega$ such that $\|\vv,q\|_X \le C_s\|\vv,q\|_{\cXs}$ for $(\vv,q)\in\cXs$ and in particular $\|\vv,q\|_X \le C_s C_\textrm{data}$ for $(\vv,q)\in\cKs$.

\subsection{Partial knowledge of the boundary conditions} \label{sec:partial}

In several applications, the boundary conditions are know on part of the boundary $\Gamma$. 
We briefly remark that the strategy developed in this work easily extends to that case.

Consider the decomposition $\Gamma=\Gamma_k\cup\Gamma_u$ of the boundary into two disjoint non-empty portions, where $\Gamma_k$ is the part where the boundary conditions are known while no information about boundary data is available on $\Gamma_u$. To simplify the discussion, we assume that the velocity is known on $\Gamma_k$, i.e., $\vu=\vg_k$ for some known function $\vg_k:\Gamma_k\rightarrow\mathbb{R}$, but the method can be extended to accommodate other types of known boundary conditions.

We are seeking a velocity-pressure couple $(\vu,p)$ such that $\mathcal S(\vu,p)= (\vf,0)^T$ and $\vu = \vg_k$ on $\Gamma_k$.
Taking advantage of the splitting strategy \eqref{def:splitting}, we define $(\vu_{\vf},p_{\vf})\in X$ as satisfying $\mathcal S(\vu_{\vf},p_{\vf})= ({\bf \vf}, 0)^T$
together with the boundary conditions
$$ 
\vu = \vg_k  \qquad \textrm{on }\Gamma_k \qquad \textrm{and}\qquad  2\veps(\vu)\vn -p\vn = {\bf 0} \quad \textrm{on }\Gamma_u.
$$
Note that since we impose (homogeneous) Neumann boundary conditions on $\Gamma_u$, the pressure is uniquely determined in $Q=L^2(\Omega)$. The recovery problem then reduces to finding $(\vu_{\cS},p_{\cS})$ in $\cXs$ satisfying the modified measurement vector $\vmeas_{\cS}:=\vmeas-\vlambda(\vu_{\vf},p_{\vf})$, where now
$$
\cXs:=\left\{(\vv,q)\in X: \,\, \cS(\vv,q)=\mathbf{0}, \,\, \vv|_{\Gamma_k}=\mathbf{0}, \,\, \vv|_{\Gamma_u}\in \left[H^s(\Gamma_u)\cap H_{00}^{\frac{1}{2}}(\Gamma_u)\right]^d \right\}
$$
and $H_{00}^{\frac{1}{2}}(\Gamma_u)$ denotes the space of $H^{\frac{1}{2}}(\Gamma_u)$ functions whose extension by zero to $\Gamma$ belongs to $H^{\frac{1}{2}}(\Gamma)$. Note that the latter reduces to $H^{\frac{1}{2}}(\Gamma_u)$ when $\Gamma_u$ is a closed hyper-surface, which occurs when the closures of $\Gamma_k$ and $\Gamma_u$ are disjoint. 

Unless specified otherwise, in the rest of the paper, we assume that the boundary condition is unknown on the entire boundary $\Gamma$, i.e., $\Gamma_k=\emptyset$.

\section{Near optimal algorithm} \label{sec:algo}

In the setting considered in this work, the optimization problem \eqref{eqn:up_star_S} has a unique solution $(\vu_\cS^*,p_\cS^*)$, which is the center of the smallest ball $B$ in $X$ containing $\cKs_{\vmeas_{\cS}}$. It is called the Chebyshev center of $\cKs_{\vmeas_{\cS}}$ and the best recovery performance is the radius $R(\cKs_{\vmeas_{\cS}})$ of $B$, see \eqref{e:opt} which serves as benchmark for recovery algorithms. Moreover, in the present setting, $(\vu_\cS^*,p_\cS^*) \in \mathcal{K}_{\vmeas_{\cS}}^s$ is the element with minimal norm in $\cXs$:
\begin{equation}\label{e:minimal_norm}
    (\vu_\cS^*,p_\cS^*)= \argmin_{(\vv,q)\in \cKs_{\vmeas_{\cS}}} \| \vv,q\|_{\cXs}
\end{equation}
and has thus the following favorable representation for its numerical approximation
\begin{equation} \label{eqn:opt_recovery}
    (\vu_{\cS}^*,p_{\cS}^*) = \sum_{j=1}^m\alpha_j^*\vphi_j.
\end{equation}
Here, for each $j=1,2,\ldots,m$, $\vphi_j$ is the Riesz representer of $\lambda_j$ in the Hilbert space $\cXs\subset X$, i.e.,  $\vphi_j\in \cXs$ solves
\begin{equation} \label{eqn:Riesz}
    \langle \vphi_j,\veta \rangle_{\cXs} = \lambda_j(\veta) \qquad \forall\,\veta\in \cXs.
\end{equation}
Moreover, $\valpha^*=(\alpha_j^*)\in\mathbb{R}^m$ is the solution to
\begin{equation} \label{eqn:LS_Gram}
    G\valpha = \vmeas_{\cS},
\end{equation}
where $G=(g_{ij})\in\mathbb{R}^{m\times m}$ is the Gram matrix with entries $g_{ij} := \lambda_i(\vphi_j) = \langle \vphi_i,\vphi_j \rangle_{\cXs}$, $i,j=1,2,\ldots,m$.

The idea, introduced in \cite{BBCDDP2023} and further considered in \cite{BG2024,BDS2025}, is to approximate the Riesz representers $\{\vphi_j\}_{j=1}^m$ using a finite element method. The proposed strategy is summarised in Algorithm~\ref{alg:OR}.

\begin{algorithm}
\small
\caption{Recovery Algorithm} \label{alg:OR}
\begin{algorithmic}[1]
\Require Tolerances $\varepsilon_1>0$, $\varepsilon_2>0$
\Ensure Approximation $(\widehat\vu,\widehat p)$ of $(\vu^*,p^*)$
\State Compute an approximation $(\widehat \vu_{\vf},\widehat p_{\vf})$ of the solution $(\vu_{\vf},p_{\vf})\in H_0^1(\Omega)^d\times L_0^2(\Omega)$ of \eqref{def:pb1} such that
\begin{equation} \label{eqn:eps1}
    \|(\vu_{\vf},p_{\vf})-(\widehat\vu_{\vf},\widehat p_{\vf})\|_X\le\varepsilon_1;
\end{equation}
\State Set $\widehat\vmeas_{\cS} := \vmeas-\vlambda(\widehat \vu_{\vf},\widehat p_{\vf})$;
\State For each $j\in\{1,2,\ldots,m\}$, compute an approximation $\widehat\vphi_j$ of the solution $\vphi_j\in\cXs$ of \eqref{eqn:Riesz} such that
\begin{equation} \label{eqn:eps2}
    \|\vphi_j-\widehat\vphi_j\|_X\le\varepsilon_2;
\end{equation}
\State Set $\widehat G=(\widehat g_{ij})\in\mathbb{R}^{m\times m}$ with $\widehat g_{ij}:= \lambda_i(\widehat \vphi_j)$ for $i,j=1,2,\ldots,m$;
\State Solve
    \begin{equation} \label{eqn:LS_Gram_approx}
        \widehat G\widehat\valpha = \widehat\vmeas_{\cS}
    \end{equation}
    and set
    \begin{equation} \label{eqn:opt_recovery_approx}
        (\widehat \vu,\widehat p) := (\widehat \vu_{\vf},\widehat p_{\vf})+(\widehat \vu_{\cS},\widehat p_{\cS}), \quad  \text{where} \quad (\widehat \vu_{\cS},\widehat p_{\cS}):=\sum_{j=1}^m\widehat\alpha_j\widehat\vphi_j.
    \end{equation} 
\end{algorithmic}
\end{algorithm}

We have
\begin{theorem}[Near optimal algorithm] \label{thm:OR}
    Let $(\widehat \vu,\widehat p)$ be the output of Algorithm~\ref{alg:OR}. If $\varepsilon_2$ is small enough so that $\widehat G$ is invertible, then
    \begin{equation}\label{e:approx_center}
    \|(\vu^*,p^*)-(\widehat \vu,\widehat p)\|_X\le \varepsilon ,
    \end{equation}
    where $\varepsilon>0$ satisfies $\varepsilon \in O(\varepsilon_1+\varepsilon_2)$ and depends only on $\varepsilon_1$, $\varepsilon_2$, $m$, $\{\lambda_i\}_{i=1}^m$, $\{\vphi_i\}_{i=1}^m$, $G$, $\widehat G$, $C_\textrm{data}$, and $C_s$.
    In particular, we have
    $$
    \sup_{(\vv,q) \in K^s_\vmeas} \|(\vv,q)-(\widehat \vu,\widehat p)\|_X\le R(K_{\vmeas}^s)_X + \varepsilon.
    $$
\end{theorem}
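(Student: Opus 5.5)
We split the error according to the decomposition \eqref{def:splitting}. Since $(\vu^*,p^*)=(\vu_\vf,p_\vf)+(\vu_\cS^*,p_\cS^*)$ and $(\widehat\vu,\widehat p)=(\widehat\vu_\vf,\widehat p_\vf)+(\widehat\vu_\cS,\widehat p_\cS)$, the triangle inequality together with \eqref{eqn:eps1} gives
\begin{equation*}
\|(\vu^*,p^*)-(\widehat \vu,\widehat p)\|_X\le \varepsilon_1+\Big\|\sum_{j=1}^m\alpha_j^*\vphi_j-\sum_{j=1}^m\widehat\alpha_j\widehat\vphi_j\Big\|_X .
\end{equation*}
For the remaining term we write $\alpha_j^*\vphi_j-\widehat\alpha_j\widehat\vphi_j=\alpha_j^*(\vphi_j-\widehat\vphi_j)+(\alpha_j^*-\widehat\alpha_j)\widehat\vphi_j$. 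Using \eqref{eqn:eps2} and $\|\widehat\vphi_j\|_X\le \|\vphi_j\|_X+\varepsilon_2$, we obtain the bound $\varepsilon_2\|\valpha^*\|_{\ell^1}+\max_j(\|\vphi_j\|_X+\varepsilon_2)\,\|\valpha^*-\widehat\valpha\|_{\ell^1}$. It therefore remains to bound $\|\valpha^*\|$ and $\|\valpha^*-\widehat\valpha\|$ in terms of the listed quantities.

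For $\valpha^*$ we use $\valpha^*=G^{-1}\vmeas_\cS$ and $\vmeas_\cS=\vlambda(\vu_\cS^*,p_\cS^*)$. Because $(\vu_\cS^*,p_\cS^*)\in\cKs$, we have $\|(\vu_\cS^*,p_\cS^*)\|_X\le C_sC_\textrm{data}$, so $|\vmeas_{\cS,i}|\le\|\lambda_i\|_{X'}C_sC_\textrm{data}$. This bounds $\|\valpha^*\|$ by $\|G^{-1}\|$, the norms $\|\lambda_i\|_{X'}$, $C_s$ and $C_\textrm{data}$.

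For the difference we use the perturbation identity
\begin{equation*}
\valpha^*-\widehat\valpha=\widehat G^{-1}\big[(\widehat G-G)\valpha^*+(\vmeas_\cS-\widehat\vmeas_\cS)\big].
\end{equation*}
Here $|g_{ij}-\widehat g_{ij}|=|\lambda_i(\vphi_j-\widehat\vphi_j)|\le\|\lambda_i\|_{X'}\varepsilon_2$, and $|(\vmeas_\cS-\widehat\vmeas_\cS)_i|=|\lambda_i((\widehat\vu_\vf,\widehat p_\vf)-(\vu_\vf,p_\vf))|\le\|\lambda_i\|_{X'}\varepsilon_1$. Hence $\|\valpha^*-\widehat\valpha\|\lesssim\|\widehat G^{-1}\|\max_i\|\lambda_i\|_{X'}(\sqrt m\,\varepsilon_2\|\valpha^*\|+\sqrt m\,\varepsilon_1)$.

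Collecting these bounds gives $\varepsilon=O(\varepsilon_1+\varepsilon_2)$, with constants depending only on $m$, $\{\lambda_i\}$, $\{\vphi_i\}$, $G$, $\widehat G$, $C_\textrm{data}$ and $C_s$, as claimed. We use the $\lambda_i$ as bounded functionals on $X$, which is implicit in the setting. The main point requiring care is that $\|\widehat G^{-1}\|$ is an explicit dependence rather than a uniform bound; this is allowed by the statement.

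If one wants uniformity in $\widehat G$, one can note that $\|G-\widehat G\|\le C\varepsilon_2$. Then, for $\varepsilon_2$ small enough, a Neumann series gives $\|\widehat G^{-1}\|\le 2\|G^{-1}\|$. This is optional.

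The final claim follows from the triangle inequality. For any $(\vv,q)\in K^s_\vmeas$,
\begin{equation*}
\|(\vv,q)-(\widehat\vu,\widehat p)\|_X\le\|(\vv,q)-(\vu^*,p^*)\|_X+\varepsilon .
\end{equation*}
Taking the supremum over $(\vv,q)\in K^s_\vmeas$, we then use that $(\vu^*,p^*)$ is the Chebyshev center of $K^s_\vmeas$. This holds because $K^s_\vmeas$ is the translate of $\cKs_{\vmeas_\cS}$ by $(\vu_\vf,p_\vf)$, and Chebyshev centers and radii are translation equivariant. So $\sup_{(\vv,q)\in K^s_\vmeas}\|(\vv,q)-(\vu^*,p^*)\|_X=R(K^s_\vmeas)_X$, which gives the stated estimate.
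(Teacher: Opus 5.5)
Your proposal is correct and follows essentially the argument the paper omits and defers to \cite{BBCDDP2023}: a perturbation analysis of the Gram system. You use the splitting $(\vu^*,p^*)=(\vu_\vf,p_\vf)+\sum_j\alpha_j^*\vphi_j$, bound $\valpha^*$ through $G^{-1}$ and $C_sC_\textrm{data}$, control $\valpha^*-\widehat\valpha$ through $\widehat G^{-1}$ and the entrywise perturbations of size $\|\lambda_i\|\varepsilon_2$ and $\|\lambda_i\|\varepsilon_1$, and conclude with translation invariance of the Chebyshev center. As in the paper, you implicitly assume $G$ is invertible, and your Neumann-series remark is exactly what makes the claim $\varepsilon\in O(\varepsilon_1+\varepsilon_2)$ uniform in $\widehat G$.
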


The proof of this result directly follows the argument provided in \cite{BBCDDP2023}, see also \cite{BDS2025}, for the Poisson problem. It is therefore omitted for the sake of brevity. In contrast, an optimal recovery result for a quantity of interest $Q \in X^\#$ - the dual of $X$ - appears to be not directly available in the literature. The next result guarantees the optimality relation \eqref{e:Q_near_opt}.

\begin{theorem}[Near optimal for quantity of interest] \label{thm:OR_Q}
    The unique optimal solution $(\vu^*,p^*) \in X$ to \eqref{eqn:up_star} satisfies $(\vu^*,p^*)\in K_{\vmeas}^s$ and the optimality relation
    \begin{equation}\label{e:Q_opt}
\sup_{(\vv,q) \in K^s_\vmeas} |Q(\vv,q)-Q(\vu^*,p^*)| = \inf_{l \in \mathbb R} \sup_{(\vv,q) \in K^s_\vmeas} |Q(\vv,q)-l|=: R(K^s_\vmeas;Q).
\end{equation}
Moreover, under the assumption of Theorem~\ref{thm:OR}, the output $(\widehat \vu,\widehat p)$ of Algorithm~\ref{alg:OR} satisfies
\begin{equation}\label{e:Q_near_opt}
\sup_{(\vv,q) \in K^s_\vmeas} |Q(\vv,q)-Q(\widehat \vu,\widehat p)| \leq R(K^s_\vmeas;Q)+\varepsilon\|Q\|_{X^{\#}}.
    \end{equation}
\end{theorem}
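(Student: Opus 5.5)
The plan is to reduce everything to the homogeneous problem via the splitting \eqref{def:splitting}, and then exploit the symmetry of $\cKs_{\vmeas_{\cS}}$ about its minimal-norm element. Since $K^s_\vmeas = (\vu_\vf,p_\vf) + \cKs_{\vmeas_\cS}$ and $(\vu^*,p^*)=(\vu_\vf,p_\vf)+(\vu_\cS^*,p_\cS^*)$, both sides of \eqref{e:Q_opt} are invariant under this translation: the shift changes $Q$ by the constant $Q(\vu_\vf,p_\vf)$, which is absorbed into $l$. It therefore suffices to work with $\cKs_{\vmeas_\cS}$ and the minimal-norm element $(\vu_\cS^*,p_\cS^*)$ of \eqref{e:minimal_norm}. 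Membership $(\vu^*,p^*)\in K^s_\vmeas$ follows because $(\vu_\cS^*,p_\cS^*)$ lies in $\cKs_{\vmeas_\cS}$. This needs the set to be nonempty, which holds since it contains the exact solution shifted by $(\vu_\vf,p_\vf)$. Uniqueness comes from the strict convexity of the Hilbert norm on $X$ applied to the Chebyshev center problem, as already stated at the start of Section~\ref{sec:algo}.

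\textbf{Symmetry.} Write $\cKs_{\vmeas_\cS} = \{ \veta^* + \vzeta : \vzeta \in \mathcal N,\ \|\veta^*+\vzeta\|_{\cXs}\le C_\textrm{data}\}$, with the following notation:
\begin{itemize}
\item $\veta^*=(\vu_\cS^*,p_\cS^*)$;
\item $\mathcal N=\{\vzeta\in\cXs:\vlambda(\vzeta)=0\}$.
\end{itemize}
I am reading the constraint as $\|\cdot\|_{\cXs}\le C_\textrm{data}$, or more generally as any symmetric convex constraint that makes $\veta^*$ the minimal-norm point. Since $\veta^*$ is a combination of Riesz representers, it is $\cXs$-orthogonal to $\mathcal N$. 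Hence
\[
\|\veta^*\pm\vzeta\|_{\cXs}^2=\|\veta^*\|^2_{\cXs}+\|\vzeta\|^2_{\cXs},
\]
so $\veta^*+\vzeta\in\cKs_{\vmeas_\cS}$ if and only if $\veta^*-\vzeta\in\cKs_{\vmeas_\cS}$. The set is therefore centrally symmetric about $\veta^*$.

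\textbf{Proof of \eqref{e:Q_opt}.} The inequality ``$\ge$'' is trivial: take $l=Q(\veta^*)$. For the reverse, fix any $l$ and any $\vzeta$ admissible as above. By linearity,
\[
|Q(\vzeta)| \le \tfrac12\bigl(|Q(\veta^*+\vzeta)-l|+|Q(\veta^*-\vzeta)-l|\bigr)\le \sup_{\cKs_{\vmeas_\cS}}|Q(\cdot)-l|.
\]
Taking the supremum over $\vzeta$ and then the infimum over $l$ gives equality. The step I expect to need the most care is this symmetry argument: the model class in \eqref{def:cKs} is written with $\|\vv\|_{H^s}+|\mean(q)|$ rather than the Hilbert norm \eqref{eqn:IP_Xs}. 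I would first check that reflection about $\veta^*$ preserves that constraint. If it does not, I would rely on the paper's assertion that $\veta^*$ is simultaneously the Chebyshev center and the minimal-norm element, i.e. effectively adopt the Hilbert-norm ball.

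\textbf{Proof of \eqref{e:Q_near_opt}.} This follows from the triangle inequality and Theorem~\ref{thm:OR}. For each $(\vv,q)\in K^s_\vmeas$,
\[
|Q(\vv,q)-Q(\widehat\vu,\widehat p)|\le |Q(\vv,q)-Q(\vu^*,p^*)| + \|Q\|_{X^\#}\,\|(\vu^*,p^*)-(\widehat\vu,\widehat p)\|_X .
\]
Bounding the second term by $\|Q\|_{X^\#}\,\varepsilon$ via \eqref{e:approx_center}, taking the supremum, and applying \eqref{e:Q_opt} gives the claim.
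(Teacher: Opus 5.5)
Your proof is correct. For \eqref{e:Q_opt} it takes a different and more elementary route than the paper; the reduction to $\cKs_{\vmeas_\cS}$ and the proof of \eqref{e:Q_near_opt} match the paper's.

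\textbf{The paper's route.} The paper picks one specific direction. It takes $\vpsi$ to be an eigenfunction for the largest eigenvalue $\mu$ of $Q^*_{\mathcal N}Q_{\mathcal N}$ (i.e.\ the $\cXs$-Riesz representer of $Q$ restricted to $\mathcal N$), scaled so that $\|\vpsi\|_{\cXs}^2=C_\textrm{data}^2-\|\vzeta^*_\cS\|_{\cXs}^2$. Orthogonality of $\vzeta^*_\cS$ to $\mathcal N$ then places $\vzeta^*_\cS\pm\vpsi$ in $\cKs_{\vmeas_\cS}$. The parallelogram law gives $\sup|Q(\cdot)-l|^2\ge|Q(\vzeta^*_\cS)-l|^2+|Q(\vpsi)|^2$. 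Finally, the variational characterization of $\mu$ over the ball $\cKs_{\vmeas_\cS}-\vzeta^*_\cS\subset\mathcal N$ identifies $|Q(\vpsi)|$ with $\sup_{\cKs_{\vmeas_\cS}}|Q(\cdot)-Q(\vzeta^*_\cS)|$.

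\textbf{Your route.} You use orthogonality once to get central symmetry of the whole set about $\veta^*$, and then apply the triangle inequality to every admissible $\vzeta$. So you need no extremal direction and no spectral argument. You also avoid the degenerate case $\|\vzeta^*_\cS\|_{\cXs}=C_\textrm{data}$, which the paper must set aside in order to normalize $\vpsi$. Your argument holds verbatim for any linear $Q$ and any set centrally symmetric about $\veta^*$.

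\textbf{What each buys.} The paper's route additionally yields the explicit value $R(K^s_\vmeas;Q)^2=\mu\,(C_\textrm{data}^2-\|\vzeta^*_\cS\|_{\cXs}^2)$. The Remark following the theorem relies on this formula, and your argument does not produce it.

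\textbf{On your caveat.} Your parenthetical ``any symmetric convex constraint that makes $\veta^*$ the minimal-norm point'' is not sufficient in general. What matters is central symmetry of the slice. With the literal constraint $\|\vv\|_{H^s(\Gamma)}+|\mean(q)|\le C_\textrm{data}$ of \eqref{def:cKs}, this can fail as soon as some $\lambda_i(\mathbf 0,1)\neq 0$, as happens for pressure measurements. However, the paper's own proof makes exactly the same Hilbert-ball reading: it infers $\vzeta^*_\cS\pm\vpsi\in\cKs_{\vmeas_\cS}$ from $\|\vzeta^*_\cS\pm\vpsi\|_{\cXs}^2=C_\textrm{data}^2$. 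So your fallback to the ball $\|\cdot\|_{\cXs}\le C_\textrm{data}$ is the right one.
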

\begin{proof}
To prove \eqref{e:Q_opt}, we follow \cite{FH2025} where this result is derived for a different model class assumption. 

We first derive the optimality result for $Q(\vu_\cS^*,p_\cS^*)$ in $\cKs_{\vmeas_{\cS}}$, where $\vzeta^*_\cS:=(\vu_\cS^*,p_\cS^*) \in \cKs_{\vmeas_{\cS}}$ is given by \eqref{eqn:up_star_S}. We assume that $\cKs_{\vmeas_{\cS}}$ does not reduce to a point, in which case the proof is trivial. In particular, this implies that $\|\vzeta^*_\cS\|_{\cXs}<C_\textrm{data}$.

We denote by $Q_{\mathcal N}: \mathcal N \rightarrow \mathbb R$ the restriction of $Q$ to the null space $\mathcal N:=\{ (\vv,q) \in \cXs \ : \ \vlambda(\vv,q)=0\}$ and by $Q^*_{\mathcal N}$ its adjoint.
Let $\vpsi$ be an eigenfunction associated with the largest eigenvalue of $Q^*_{\mathcal N}Q_{\mathcal N}: \mathcal N \rightarrow \mathcal N$ and normalized such that $\|\vpsi\|_{\cXs}^2 = C^2_\textrm{data}- \| \vzeta^*_\cS \|_{\cXs}^2> 0$.
 
Observe that $\vzeta^*_\cS \pm \vpsi \in \cXs$ and 
$
\vlambda(\vzeta^*_\cS \pm \vpsi) =\vlambda(\vzeta^*_\cS)\pm\vlambda(\vpsi) = \vlambda(\vzeta^*_\cS) = \vmeas_{\cS}$.
Since, according to \eqref{e:minimal_norm}, $\vzeta^*_\cS$ is the element of minimal norm in $\cXs$ satisfying the measurements, we deduce
$$
\| \vzeta^*_\cS \pm \vpsi \|^2_{\cXs} = \| \vzeta^*_\cS \|_{\cXs}^2 + \| \vpsi \|_{\cXs}^2 =C^2_\textrm{data}.
$$
This implies that $\vzeta^*_\cS\pm \vpsi  \in \cKs_{\vmeas_{\cS}}$ and for any $l \in \mathbb R$ we have
$$
\sup_{(\vv,q) \in \cKs_{\vmeas_{\cS}}} | Q(\vv,q)-l|^2 \geq \frac 1 2 |Q(\vzeta^*_\cS+ \vpsi) -l|^2 + \frac 1 2 |Q(\vzeta^*_\cS- \vpsi) -l|^2 = \frac 1 2 |Q(\vzeta^*_\cS)-l + Q(\vpsi)|^2 + \frac 1 2 |Q(\vzeta^*_\cS)-l -Q(\vpsi)|^2.
$$
Estimating further using the parallelogram law, we find that
$$
\sup_{(\vv,q) \in \cKs_{\vmeas_{\cS}}} | Q(\vv,q)-l|^2 \geq |Q(\vzeta^*_\cS)-l|^2 + |Q(\vpsi)|^2 \geq |Q(\vpsi)|^2.
$$
Moreover, because $\vpsi \in \mathcal N$ corresponds to the largest eigenvalue of $Q^*_{\mathcal N}Q_{\mathcal N}$, we find that
$$
|Q(\vpsi)|^2 = \sup_{ \substack{\veta \in \mathcal N, \\ \| \veta \|_{\cXs}^2 \leq C_{\textrm{data}}^2-\| \vzeta^*_\cS\|_{\cXs}^2}} |Q(\veta)|^2 = \sup_{(\vv,q) \in \cKs_{\vmeas_{\cS}}} |Q((\vv,q)-\vzeta^*_\cS)|^2 = \sup_{(\vv,q) \in \cKs_{\vmeas_{\cS}}} |Q(\vv,q)-Q(\vzeta^*_\cS)|^2.
$$

Combining the above relations, we find that
$$
\sup_{(\vv,q) \in \cKs_{\vmeas_{\cS}}} |Q(\vv,q)-Q(\vzeta^*_\cS)|^2 \leq \sup_{(\vv,q) \in \cKs_{\vmeas_{\cS}}} |Q(\vv,q)-l|^2
$$
and thus 
\begin{equation}\label{e:opt_Q_S}
 \sup_{(\vv,q) \in \cKs_{\vmeas_{\cS}}} |Q(\vv,q)-Q(\vzeta^*_\cS)| = \inf_{l \in \mathbb R} \sup_{(\vv,q) \in \cKs_{\vmeas_{\cS}}} |Q(\vv,q)-l|,
\end{equation}
which is the desired optimality result for $Q(\vu_\cS^*,p_\cS^*)=Q(\vzeta^*_\cS)$.

The estimate \eqref{e:Q_opt} follows recalling that $(\vu,p)\in K^s_\vmeas$ if and only if $(\vu,p)=(\vu_\vf,p_\vf)+(\vu_\cS,p_\cS)$, where $(\vu_\vf,p_\vf) \in H^1_0(\Omega)^d\times L^2_0(\Omega)$ solves \eqref{def:pb1} and $(\vu_\cS,p_\cS) \in \cKs_{\vmeas_{\cS}}$. Therefore, for $(\vv,q)\in K_{\vmeas}^s$ we have
$$
Q(\vv,q)-Q(\vu^*,p^*) = Q((\vv,q)-(\vu_{\vf},p_{\vf}))-Q(\vzeta^*_\cS)
$$
so that \eqref{e:opt_Q_S} yields
\begin{equation*}
    \begin{split}
        \sup_{(\vv,q) \in K^s_{\vmeas}} |Q(\vv,q)-Q(\vu^*,p^*)| \leq \inf_{l \in \mathbb R} \sup_{(\vv,q) \in \cKs_{\vmeas_{\cS}}} |Q(\vv,q)-l| = \inf_{l \in \mathbb R} \sup_{(\vv,q) \in K^s_{\vmeas}} |Q(\vv,q)-l|. 
        \end{split}
\end{equation*}

The proof of \eqref{e:Q_near_opt} directly follows from \eqref{e:Q_opt} and the estimate for the approximation of the Chebyshev center \eqref{e:approx_center} given in Theorem~\ref{thm:OR}:
$$
\sup_{(\vv,q) \in K^s_\vmeas} |Q(\vv,q)-Q(\widehat \vu,\widehat p)| \leq R(K^s_\vmeas;Q)+ 
|Q(\vu^*,p^*)-Q(\widehat \vu,\widehat p)| \leq R(K^s_\vmeas;Q) + \varepsilon \|Q\|_{X^{\#}}.
$$
\end{proof}

\begin{remark}
It is possible to relate $R(K^s_\vmeas;Q)=R(\cKs_{\vmeas_{\cS}};Q)$ with $R(K^s_\vmeas)=R({\cKs_\vmeas})$. Indeed, it transpires from the argument provided in the proof of Theorem~\ref{thm:OR_Q} that
    $
    |Q(\vpsi)|^2 = R(\cKs_{\vmeas_{\cS}};Q)^2$, 
    where we recall that $\vpsi$ is an eigenfunction corresponding to the largest eigenvalue of $Q^*_{\mathcal N}Q_{\mathcal N}$, say $\mu$. Therefore,
$$
R(K^s_\vmeas;Q)^2 = \mu \| \vpsi \|_{\cXs}^2 = \mu(C_\textrm{data}^2-\| \vu^*_\cS,p^*_\cS \|_{\cXs}^2) = \mu \inf_{(\vu,p)\in X} \sup_{(\vv,q)\in \cKs_{\vmeas_{\cS}}}\|(\vv,q)-(\vu,p)\|_X = \mu R(K^s_\vmeas)^2.
$$
\end{remark}

\section{Computation of the Riesz representers} \label{sec:Riesz}

The key ingredient in the near-optimal recovery algorithm presented in Section~\ref{sec:algo} is the computation of the Riesz representers $\vphi_j$ of the measurement functionals $\lambda_j$, $j=1,2,\ldots,m$, when viewed as functionals on $\cXs$. In view of the definition of the inner product on $\cXs$, see \eqref{eqn:IP_Xs}, we have that for $j=1,2,\ldots,m$, $\vphi_j=(\vw_j,r_j)\in\cXs$ solves
\begin{equation} \label{eqn:Riesz_j}
  \langle \vw_j,\vv\rangle_{H^s(\Gamma)} + \mean(r_j)\mean(q)=\lambda_j(\vv,q) \quad \forall\,(\vv,q)\in\cXs.  
\end{equation}

In this section, we discuss a practical algorithm that can be used to approximate $\vphi_j$, $j=1,2,\ldots,m$. We start by describing a simple technique to handle the nonlocal mean value terms in \eqref{eqn:Riesz_j} without extra expense.  We then propose a practical finite element method to approximate the Riesz representers.

For simplicity, we describe the procedure considering a generic measurement functional $\lambda$ by omitting the index $j \in \{1,...,m\}$. 

\subsection{Mean value constraint} \label{sec:splitting_Riesz}

The mean value of the pressure requires a special treatment that we discuss here. The idea is to decompose the pressure component $r$ of $\vphi$ into a mean-free part plus a constant and observe that the latter can be explicitly determined by \eqref{eqn:Riesz_j}. Subtracting this constant from $r$ at the outset, all subsequent calculations are done for pressures with vanishing average, that is, looking for velocity-pressure couples in  
\begin{equation} \label{def:space_Xs0}
  \cXsz:=\left\{(\vv,q)\in \cXs: \,\, q\in L_0^2(\Omega)\right\} \subset \cXs.
\end{equation}
Notice that $\| \vv,q \|_{\cXs} = \| \vv \|_{H^s(\Gamma)}$ for $(\vv,q)\in \cXsz$. 

Let $\vphi=(\vw,r)\in\cXs$ be the Riesz representer of $\lambda$ in $\cXs$. Letting $(\vv,q)=(\mathbf{0},1) \in \cXs$ in \eqref{eqn:Riesz_j}, we find that the mean of $r$ is explicitly given by
$$\mean(r)=\lambda(\mathbf{0},1)=:\Lambda \in \mathbb R.$$
We then introduce $r_{0}:=r-\Lambda \in L^2_0(\Omega)$ so that 
\begin{equation} \label{eqn:split_Riesz_j}
  (\vw,r) = (\vw,r_{0}) + (\mathbf{0},\Lambda), \qquad \textrm{or} \qquad \vphi = \vphi_0+ (\mathbf{0},\Lambda),
\end{equation}
where $\vphi_{0}:=(\vw,r_{0})\in\cXsz$ solves
\begin{equation} \label{eqn:Riesz_j0}
  \langle \vw,\vv\rangle_{H^s(\Gamma)}=\lambda(\vv,q) \quad \forall\,(\vv,q)\in\cXsz;
\end{equation}
compare with \eqref{eqn:Riesz_j}.
We refer to $\vphi_{0}$ as the Riesz representer in $\cXsz$ of $\lambda$ (restricted to pressures with vanishing mean value). 

\subsection{Saddle-point formulation} \label{subsec:saddle}

The Stokes and compatibility constraits in $\cXsz$ makes the finite element discretization of \eqref{eqn:Riesz_j0} challenging. To address this difficulty, we adopt a standard approach based on Lagrange multipliers, which allows the constraints to be enforced weakly rather than built directly into the functional space.

We define the functional space
$$X_0^s = \left\{(\vv,q)\in H^1(\Omega)^d\times L_0^2(\Omega)\ :   \ \vv|_\Gamma \in H^s(\Gamma)^d \right\}$$
relaxing the Stokes and compatibility constraints in $\cXs_0$ and equip it with the norm
\begin{equation} \label{def:X0s_norm}
  \|(\vv,q)\|_{X_0^s}:=\sqrt{\|\nabla\vv\|_{L^2(\Omega)}^2+\|\vv\|_{H^s(\Bd)}^2+\|q\|_{L^2(\Omega)}^2}.  
\end{equation}
The saddle-point formulation of \eqref{eqn:Riesz_j0} using Lagrange multipliers read: find $(\vw,r_0)\in X_0^s$ and $(\vpi,\rho)\in H_0^1(\Omega)^d\times L^2(\Omega)$ such that
\begin{equation} \label{def:saddle}
	\left\{\begin{array}{rcll}
		a_s(\vw,\vv)+\mathcal{A}((\vv,q),(\vpi,\rho)) & = & \lambda(\vv,q), & \forall\, (\vv,q)\in X_0^s, \\[2ex]
        \mathcal{A}((\vw,r_0),(\vz,\tau)) & = & 0, & \forall\, (\vz,\tau)\in H_0^1(\Omega)^d\times L^2(\Omega),
	\end{array}\right.
\end{equation}
where $a_s(\vz,\vv) :=\langle \vz,\vv\rangle_{H^s(\Bd)}$ and $\mathcal{A}((\vv,q),(\vz,\tau)) := k(\vv,\vz)+b(\vv,\tau)+b(\vz,q)$ with $k(\vv,\vz):=\int_{\Omega}2\veps(\vv):\veps(\vz)$ and $b(\vv,q) := - \int_{\Omega}q\,{\rm div}(\vv)$.

The proof of existence and uniqueness for the saddle-point problem is technical but rather standard. It is therefore given in Appendix~\ref{sec:WP_saddle}. However, we point out that the solution $(\vw,r_0)$ of the above system is also the unique solution to \eqref{eqn:Riesz_j0}. Indeed, taking $(\vz,\tau) =(\mathbf{0},1)$ in the second equation in \eqref{def:saddle} yields $b(\vw,1)=0$, and thus $\int_{\Bd}\vw\cdot\vn=0$ thanks to the divergence theorem. Furthermore, the second equation of \eqref{def:saddle} restricted to $(\vz,\tau)\in H_0^1(\Omega)^d\times L_0^2(\Omega)$ shows that $(\vw,r_0)\in X_0^s$ satisfies $\mathcal S(\vw,r_0)={\bf 0}$ and thus $(\vw,r_0)\in \cXsz$. 
Lastly, restricting to  test functions $(\vv,q)\in\cXsz$ in the first equation in \eqref{def:saddle} so that $\mathcal{A}((\vv,q),(\vpi,\rho))=0$ yields 
$$
a_s(\vw,\vv) =\lambda(\vv,q) \quad \forall\,(\vv,q)\in\cXsz
$$
and $(\vw,r_0)$ satisfies \eqref{eqn:Riesz_j0}.

From the previous argumentation, we realize that if $\rho=\rho_0+\bar \rho$ with $\rho_0\in L_0^2(\Omega)$ and $\bar \rho:=\mean{(\rho)}\in\mathbb{R}$, $(\vpi,\rho_0)$ are the Lagrange multipliers enforcing the Stokes constraint, while $\bar \rho$ is the Lagrange multiplier enforcing the compatibility condition $\int_{\Bd}\vw\cdot\vn = 0$. The latter is critical not only for the equivalence between \eqref{def:saddle} and \eqref{eqn:Riesz_j0} but also for the existence and uniqueness proof presented in Appendix~\ref{sec:WP_saddle}.

\subsection{Finite element approximation}

Let $\{\mathcal{T}_h\}_{h>0}$ be a family of quasi-uniform and shape-regular subdivisions of $\overline{\Omega}$ made either of simplices or quadrilaterals/hexahedrons, possibly curved to match $\Gamma$. For each mesh $\mathcal{T}_h$, and let $\mathbb{V}_h$ be a Lagrange (nodal) finite element space for $H^1(\Omega)^d$, let $\mathbb{W}_h:=\mathbb{V}_h\cap H_0^1(\Omega)^d$ be the subspace of $\mathbb{V}_h$ with vanishing trace.

It will be convenient to distinguish between nodal basis functions associated with interior degrees of freedom and boundary ones. We let $N:=\textrm{dim}(\mathbb W_h)$ and $N_b:=\textrm{dim}(\mathbb V_h)-N$ and denote by $\vvarphi_i$, $i=1,...,N+N_b$, the nodal basis functions associated to the degrees of freedom parameterizing $\mathbb V_h$ numbered so as to start with the nodal basis associated with the interior degrees of freedom.
With this notation we have
\begin{align*}
    \mathbb{W}_h &={\rm span}\{\vvarphi_1,\ldots,\vvarphi_N\}\subset H_0^1(\Omega)^d, \qquad \mathbb{V}_h^b :={\rm span}\{\vvarphi_{N+1},\ldots,\vvarphi_{N+N_b}\}\subset H^1(\Omega)^d, \\
  \mathbb{V}_h &=\mathbb{W}_h\oplus \mathbb{V}_h^b\subset H^1(\Omega)^d.
\end{align*}

Regarding the pressure space, let $\mathbb{M}_h$ be a conforming finite element space for $L^2(\Omega)$ and let $\mathbb{Q}_h:=\mathbb{M}_h\cap L_0^2(\Omega)$ be the subspace with zero average. We suppose that $\mathbb{W}_h\times \mathbb{Q}_h$ satisfies the (Stokes) inf-sup condition, namely that there exists a constant $\beta>0$ (independent of $h$) such that
$$
\inf_{q_h\in \mathbb{Q}_h}\,\sup_{\vv_h\in\mathbb{W}_h}\frac{\int_{\Omega}q_h{\rm div}(\vv_h)}{\|\nabla\vv_h\|_{L^2(\Omega)}\|q_h\|_{L^2(\Omega)}}\ge \beta.
$$
We let $\widetilde N:=\textrm{dim}(\mathbb Q_h)$ and denote by $\phi_i$, $i=1,...,{\widetilde N}$, a basis of $\mathbb Q_h$.

Using the decomposition $L^2(\Omega)=L_0^2(\Omega)\oplus \mathbb{R}$, the discrete counterpart of the saddle-point system \eqref{def:saddle} reads: find $(\vw_h,r_{0,h})\in \mathbb{V}_h\times \mathbb{Q}_h$ and $(\vpi_h,\rho_h,\gamma)\in \mathbb{W}_h\times \mathbb{Q}_h\times \mathbb R$ such that
\begin{equation} \label{def:saddle_FE}
	\left\{\begin{array}{rcll}
		a_s(\vw_h,\vv_h)+\mathcal{A}((\vv_h,q_h),(\vpi_h,\rho_h))+b(\vv_h,\gamma) & = & \lambda(\vv_h,q_h), & \forall\, (\vv_h,q_h)\in \mathbb{V}_h\times \mathbb{Q}_h, \\
        \mathcal{A}((\vw_h,r_{0,h}),(\vz_h,\tau_h)) + b(\vw_h,\delta) & = & 0, & \forall\, (\vz_h,\tau_h,\delta)\in \mathbb{W}_h\times \mathbb{Q}_h \times \mathbb R.
    \end{array}\right.
\end{equation}
In Section~\ref{subsec:iter_solve} we describe an efficient way to solve  \eqref{def:saddle_FE} based on the partition of interior
and boundary velocity degrees of freedom. The practical aspects are then discussed in Sections~\ref{subsec:lin_sys} and \ref{subsec:fractional_s}.
But before proceeding, we note that based on the analysis performed in \cite{BG2024} for the Poisson problem, we expect $\varepsilon$ in Theorems~\ref{thm:OR} and~\ref{thm:OR_Q} to behave like $\varepsilon \in O(h^{\tilde s})$ for some $\tilde s>0$ depending on $s$, $\Omega$ and the polynomial degree used in $\mathbb V_h \times \mathbb Q_h$.

\subsubsection{Decoupled algorithm}
\label{subsec:iter_solve}

The use of Lagrangian finite elements for $\mathbb{V}_h$ gives the saddle point system \eqref{def:saddle_FE} a particular structure that allows for an efficient solver. Indeed, writing
$$\vw_h=\vw_h^0+\vw_h^b, \qquad \text{where} \quad \vw_h^0\in \mathbb{W}_h \quad \text{and} \quad \vw_h^b\in \mathbb{V}_h^b,$$
and splitting the velocity-pressure couple as $(\vw_h,r_{0,h})=(\vw_h^0,r_{0,h})+(\vw_h^b,0)$, the following iterative procedure can be used to solve the system \eqref{def:saddle_FE}:
\begin{enumerate}[Step \arabic*.]
    \item Restricting test functions to $(\vv_h^0,q_h)\in\mathbb{W}_h\times \mathbb{Q}_h$ in the first equation of \eqref{def:saddle_FE} and using the fact that $\vv_h^0|_{\Gamma}=\mathbf{0}$ and $b(\vv_h^0,\gamma)=0$ for all $\gamma \in \mathbb R$, we arrive at the following system for the Lagrange multiplier $(\vpi_h,\rho_h)\in\mathbb{W}_h\times \mathbb{Q}_h$:
\begin{equation} \label{sys:LagMult_FE}
  \mathcal{A}((\vv_h^0,q_h),(\vpi_h,\rho_h)) = \lambda(\vv_h^0,q_h), \qquad \forall\, (\vv_h^0,q_h)\in\mathbb{W}_h\times \mathbb{Q}_h.
\end{equation}
Notice that \eqref{sys:LagMult_FE} is Stokes system with forcing $\lambda$ and vanishing velocity on $\Gamma$ (and thus the pressure is with vanishing mean value).
    \item \label{enum:step2} Restricting to test functions $(\vv_h^b,0) \in \mathbb{V}_h^b \times \mathbb Q_h$, $\vz_h=\bf{0}$, $\tau_h=0$ and using that $\vw_h^0|_\Gamma=\mathbf{0}$ yield a coercive fractional diffusion system on $\Gamma$ with constraint
    \begin{equation} \label{sys:w_bdy_FE}
	\left\{\begin{array}{rcll}
		a_s(\vw_h^b,\vv_h^b)+b(\vv_h^b,\gamma) & = & \lambda(\vv_h^b,0)-k(\vv_h^b,\vpi_h)-b(\vv_h^b,\rho_h), & \forall\, \vv_h^b\in\mathbb{V}_h^b, \\
        b(\vw_h^b,\delta) & = & 0, & \forall\, \delta\in \mathbb R,
        % b(\vw_h^b,d) & = & 0 & \forall\, d\in\mathbb{R}
	\end{array}\right.
\end{equation}
which uniquely determine $\vw_h^b\in\mathbb{V}_h^b$, and the constant $\gamma \in \mathbb R$ necessary to enforce the compatibility condition $\int_\Gamma \vw_h^b \cdot \vn = 0$. 
\item The remaining unknown $(\vw_h^0,r_{0,h})\in\mathbb{W}_h\times \mathbb{Q}_h$ is uniquely determined by another Stokes system corresponding to the second equation of \eqref{def:saddle_FE} with $\delta=0$
\begin{equation} \label{sys:Riesz_FE}
\mathcal{A}((\vw_h^0,r_{0,h}),(\vz_h,\tau_h)) = -\mathcal{A}((\vw_h^b,0),(\vz_h,\tau_h)) \qquad \forall\, (\vz_h,\tau_h)\in\mathbb{W}_h\times \mathbb{Q}_h.
\end{equation}
Notice that \eqref{sys:Riesz_FE} is Stokes system for $(\vw_h,r_{0h})$ without forcing term but with boundary condition $\vw_h|_\Gamma=\vw_h^b$ (and thus the pressure is with vanishing mean value). It has a unique solution because, thanks to the previous step, the boundary data satisfies the compatibility condition.
\end{enumerate}

\subsubsection{Linear systems}
\label{subsec:lin_sys}

In this section, we explicit the linear systems associated to steps 1 to 3. We suppose here that the computation of $H^s(\Gamma)$ is exact and postpone the discussion of a practical algorithm for non-integer $s$ to Section~\ref{subsec:fractional_s}. 

We write each unknown function in their respective basis, namely
$$\vw_h = \sum_{j=1}^{N+N_b}\mathrm{w}_j\vvarphi_j, \quad r_{0,h}=\sum_{j=1}^{\widetilde N}r_{0,j}\phi_j, \quad \vpi_h= \sum_{j=1}^N\pi_j\vvarphi_j, \quad \text{and} \quad  \rho_h=\sum_{j=1}^{\widetilde N}\rho_j\phi_j$$
and collect the unknown coefficients in the vectors $W_0:= (w_i)_{i=1}^N \in \mathbb R^N$, $W_b:= (w_{N+i})_{i=1}^{N_b} \in \mathbb R^{N_n}$, $R_0:= (r_{0,i})_{i=1}^{\widetilde N}$, 
$\Pi:= (\pi_{i})_{i=0}^N$, $P:= (\rho_i)_{i=1}^{\widetilde N}$. Notice that the coefficients for $\vw_h$ are decomposed according to $\mathbb V_h = \mathbb W_h \oplus \mathbb V_h^b$.

We now make explicit the linear systems corresponding to the three steps of the decoupled algorithm described in Section~\ref{subsec:iter_solve}. 

\begin{enumerate}[label={Step \arabic*.}]
\item The discrete system \eqref{sys:LagMult_FE} for the Lagrange multipliers reads
\begin{equation} \label{def:lin_sys_mult}
    \left(\begin{array}{cc}
       K_0 & B_0^T \\
       B_0 & 0
       \end{array}\right)
    \left(\begin{array}{c}
       \Pi \\ P 
    \end{array}\right) = 
    \left(\begin{array}{c}
       F_0 \\ G
    \end{array}\right).
\end{equation}
Here, $K_0 := (k(\vvarphi_j,\vvarphi_i))_{i,j=1}^{N,N} \in \mathbb{R}^{N\times N}$ and $B_0 := (b(\vvarphi_j,\phi_i))_{i,j=1}^{{\widetilde N},N} \in \mathbb{R}^{{\widetilde N}\times N}$ are the finite element matrices associated to the bilinear forms $k(\cdot,\cdot)$ restricted to $\mathbb W_h \times \mathbb W_h$ and $b(\cdot,\cdot)$ restricted to $\mathbb W_h \times  \mathbb Q_h$, while $F_0:= (\lambda(\vvarphi_i,0))_{i=1}^N\in\mathbb{R}^N$ and $G:=(\lambda({\bf 0},\phi_i))_{i=1}^{\widetilde N}\in\mathbb{R}^{\widetilde N}$.
Since $K_0$ is invertible, we have
\begin{equation}\label{e:algo_step1}
B_0K_0^{-1}B_0^TP = B_0K_0^{-1}F_0-G \qquad \textrm{and} \qquad K_0\Pi = F_0-B_0^TP.
\end{equation}
\item The vector $W_b$ and the Lagrange multiplier $\gamma$ for the compatibility condition satisfy
\begin{equation} \label{def:lin_sys_cstr}
    \left(\begin{array}{cc}
       A_s & D \\
       D^T & 0
       \end{array}\right)
    \left(\begin{array}{c}
       W_b \\ \gamma
    \end{array}\right) = 
    \left(\begin{array}{c}
       F_b-K_b^T\Pi-B_b^TP \\ 0
    \end{array}\right),
\end{equation}
where $A_s := (a_s(\vvarphi_{N+j},\vvarphi_{N+i}))_{i,j=1}^{N_b,N_b} \in \mathbb{R}^{N_b\times N_b}$ is the finite element matrices corresponding to $a_s(\cdot,\cdot)$ restricted to $\mathbb V_h^b \times \mathbb V_h^b$, $D:= (b(\vvarphi_{N+i},1))_{i=1}^{N_b}\in \mathbb{R}^{N_b}$ is the finite element vector
corresponding to $b(\cdot,\cdot)$ restricted to $\mathbb V_h^b \times \mathbb R$, and $F_b:= (\lambda(\vvarphi_{N+i},0))_{i=1}^{N_b}\in\mathbb{R}^{N_b}$.
The above linear system is decoupled:
\begin{equation}\label{e:algo_step2}
\gamma = \frac{1}{D^TA_s^{-1}D} D^TA_s^{-1}(F_b-K_b^T\Pi-B_b^TP) \qquad \textrm{and} \qquad A_sW_b = F_b-K_b^T\Pi-B_b^TP- \gamma D.
\end{equation}
The computation of $\gamma$ and $W_b$ requires the approximation of $A_s^{-1}D$ (which is independent of the measurements and can be reused for all Riesz representers) and of $A_s^{-1}(F_b-K_b^T\Pi-B_b^TP)$.

\item The remaining unknown vectors $W_N$ and $R_0$ are solution to the linear system
\begin{equation} \label{def:lin_sys_riesz}
    \left(\begin{array}{cc}
       K_0 & B_0^T \\
       B_0 & 0
       \end{array}\right)
    \left(\begin{array}{c}
       W_0 \\ R_0
    \end{array}\right) = 
    \left(\begin{array}{c}
       -K_bW_b \\ -B_bW_b
    \end{array}\right)
\end{equation}
and so 
\begin{equation}\label{e:algo_step3}
B_0K_0^{-1}B_0^TR_0 = (B_b-B_0K_0^{-1}K_b)W_b \qquad \textrm{and}\qquad K_0W_0 = -K_bW_b-B_0^TR_0. 
\end{equation}
\end{enumerate}

We note that \eqref{e:algo_step1}-\eqref{e:algo_step2}-\eqref{e:algo_step3} require the approximation of linear system involving $A_s$ and $K_0$. For $K_0$ this can be either performed by its $LU$ decomposition once for all or by using an iterative solver. Since these matrices are the same for all Riesz representers, we used the former approach for the numerical illustrations provided in Section~\ref{sec:numeric}. Furthermore the system for $P$ in \eqref{e:algo_step1}-left and for $R_0$ in \eqref{e:algo_step3}-left are solved using a conjugate gradient algorithm. The situation is more complicated for $A_s$. When $s=1$, we can proceed similarly than for $K_0$. However, when $s$ is an integer larger than $1$ the restrictions to $\Gamma$ of functions in $\mathbb W_h$ are not in $H^s(\Gamma)$. Furthermore, when $s$ is not integral, the matrix $A_s$ is full since it corresponds to a fractional diffusion, and thus nonlocal, system. In the next section, we present an algorithm to circumvent these issues.

\subsubsection{Approximation of $A_s^{-1}$}
\label{subsec:fractional_s}

In this section, we discuss the resolution of \eqref{e:algo_step2} when $s$ is non integral or/and $s>1$.
For the latter, it suffices to observe that $A_s^{-1} = A_{s-1}^{-1}A_1^{-1}$ so that this inverse reduces to $\lfloor s \rfloor$ application of $A_1^{-1}$ and one application of $A_t^{-1}$ with $t:=s-\lfloor s \rfloor$, which is between $0$ and $1$. Therefore, it remains to discuss the case $0<s<1$.

Observe that the matrix $A_s$ is full. Rather than assembling it, we provide an approximation of $A_s^{-1}$ following the technology proposed in \cite{BG2024}. This approach takes advantage of the spectral definition of the inner product of $H^s(\Gamma)$, see \eqref{def:Hs}, and relies on one of the well-established numerical methods for fractional diffusion problems. Refer to \cite{bonito2015numerical,bonito2017sinc} for elliptic operators acting on functions defined on Euclidean domains and to \cite{bonito2021approximation} for an extension to functions defined on hyper-surfaces as needed in the present work. 

We introduce the mass matrix $M:=( \langle \vvarphi_{N+j},\vvarphi_{N+i}\rangle_{L^2(\Gamma)})_{i,j=1}^{N_b,N_b}\in\mathbb{R}^{N_b\times N_b}$ and stiffness matrix $L:=( \langle \nabla_{\Gamma} \vvarphi_{N+j},\nabla_{\Gamma} \vvarphi_{N+i}\rangle_{L^2(\Gamma)})_{i,j=1}^{N_b,N_b}\in\mathbb{R}^{N_b\times N_b}$ and recall the Balakrishnan formula
\begin{equation} \label{eqn:Balak_Twh}
  A_s^{-1} := \frac{\sin(\pi s)}{\pi}\int_{-\infty}^{\infty}e^{(1-s)y}\left( (e^y+1)M+L\right)^{-1} \text{d}y.
\end{equation}
A sinc quadrature is then used to approximate the improper integrals. Given a spacing parameter $k>0$, we set $\texttt{M}:=\left\lceil\frac{\pi^2}{2(1-s)k^2}\right\rceil$ and $\texttt{N}:=\left\lceil\frac{\pi^2}{2sk^2}\right\rceil$, 
and advocate the approximation of $A_s^{-1}$ by
\begin{equation} \label{eqn:Balak_sinc_Twh}
  A_s^{-1} \approx \mathcal{Q}_k^{-s}:=\frac{k\sin(\pi s)}{\pi}\sum_{l=-\texttt{M}}^{\texttt{N}}e^{(1-s)y_l}\left( (e^{y_l}+1)M+L\right)^{-1}, \qquad y_l:=kl.
\end{equation}
We refer to \cite{bonito2017sinc,bonito2021approximation} for the exponential (in $k$) convergence  properties of this approximation and to \cite{BG2024} for its influence on an optimal recovery problem for the Poisson problem.

\section{Numerical experiments}\label{sec:numeric}

In this section, we illustrate the performance of Algorithm~\ref{alg:OR} when the Riesz representers are approximated using a finite element method as described in Section~\ref{sec:Riesz}. The measurements are Gaussian functionals serving as proxy for point evaluations of the component of the velocity and the pressure. More precisely, given a point (center) $\vz\in\Omega$ we define
\begin{equation}\label{def:lambda_z}
\lambda_{\vz,i}(\vpsi) =  \frac{1}{\sqrt{2\pi r^2}} \int_\Omega \exp{\left(-\frac{|\vx-\vz|^2}{2r^2}\right)} \psi_i(\vx)d\vx, \qquad r=0.1,
\end{equation}
where $i\in\{1,2,\ldots,d+1\}$ and $\vpsi:=((\psi_1,...,\psi_d),\psi_{d+1}) \in H^1(\Omega)^d\times L^2(\Omega)$.
The number and the distribution of the points $\vz$ will be indicated for each test case.

We recall that the approximation of each Riesz representer is performed using the decoupled algorithm presented in Section~\ref{subsec:lin_sys}. We store an $LU$ decomposition of $K_0$ and $A_1$ and use a conjugate gradient algorithm for the resolution of \eqref{e:algo_step1}-left and \eqref{e:algo_step3}-left. The conjugate gradient algorithms are stopped when the $l_2$-norm of the system residual is smaller that $10^{-9}$ times the $l_2$-norm of the right-hand side of the system. The same process is used for the finite element approximation of $(\vu_{\vf},p_{\vf}) \in H^1_0(\Omega)^d\times L^2_0(\Omega)$ satisfying \eqref{def:pb1}.
To limit numerical errors due to the poor conditioning of $\widehat G$ in \eqref{eqn:LS_Gram_approx}, which becomes more pronounced as the number of measurements increases, we employ a diagonal preconditioner  as well as a thresholding strategy with parameter $\epsilon=10^{-10}$; refer to Section~\ref{sec:ill_cond} for details. 

In Section~\ref{sec:square}, we consider a square computational domain with two manufactured solutions and investigate the impact of the number of measurements on the recovery error. In Section~\ref{sec:square_hole}, we extend this study to a square domain with a circular hole, where only the boundary conditions on the hole boundary are unknown and the pressure to be recovered does not have vanishing mean value. Building on these results, Section~\ref{sec:airfoil} considers the flow around an airfoil, for which  both the outflow boundary condition and the boundary condition on the airfoil are unknown. In absence of an exact solution, we use a numerical solution with prescribed boundary conditions to draw the measurements and assess the recovery error. We also report recovery errors for the drag and lift coefficients. Section~\ref{sec:ill_cond} is devoted to the numerical study of different strategies to cope with the conditioning of $\widehat G$ \ref{sec:ill_cond}. We end the numerical section by presenting the recovery of functions defined on a three-dimensional domain in Section~\ref{sec:3D}.

All numerical studies are conducted with regularity parameter $s=1$ and with Taylor--Hood $\mathbb{Q}_2^d\times\mathbb{Q}_1$ element for the finite element space; the influence of the regularity parameter $s$ is examined at the end of Section~\ref{sec:square}. 

\subsection{Square domain} \label{sec:square}

In this section, we  provide a numerical study on the influence of the number of measurements on the recovery error.
We set $\Omega=(0,1)^2$ and consider two different solutions to be recovered, namely
\begin{equation} \label{num:case1}
    u_1(\vx) = e^{x_1}\cos(x_2), \quad u_2(\vx)=-e^{x_1}\sin(x_2)+2x_1^2, \quad p(\vx)=2(2x_2-1)
\end{equation}
and
\begin{equation} \label{num:case2}
    u_1(\vx) = -\cos(\pi x_1)\sin(\pi x_2), \quad u_2(\vx)=\sin(\pi x_1)\cos(\pi x_2), \quad p(\vx)=-\cos(2\pi x_1)-\cos(2\pi x_2),
\end{equation}
where $\vx=(x_1,x_2)\in\Omega$.  Note that in the first case \eqref{num:case1}, $\vf = {\bf 0}$ as well as  $\|\vu\|_{H^1(\Omega)}=3.274$ and  $\|p\|_{L^2(\Omega)}=1.155$.
We have $\vf \not\equiv \mathbf{0}$ in the second case \eqref{num:case2} and $\|\vu\|_{H^1(\Omega)}=3.220$, $\|p\|_{L^2(\Omega)}=1$.

Given a square integer $m=l^2$, the points controlling the centers of the Gaussian measurements defined in \eqref{def:lambda_z} are uniformly distributed in $\Omega$:
\begin{equation} \label{eqn:z_Gauss}
    \vz_{i,j} = \left(\frac{i}{l+1},\frac{j}{l+1}\right), \quad i,j=1,2,\ldots,l.
\end{equation}

Finally, given an integer $n$ (refinement level), the domain $\Omega$ is discretized into $2^{2n}$ squares of side-length $2^{-n}$, yielding a uniform mesh with mesh size $h=\sqrt{2}2^{-n}$.

We report results for two refinements corresponding to $n=6$ (33282 degrees of freedom for the velocity and 4225 for the pressure) and $n=8$ (526338 degrees of freedom for the velocity and 66049 for the pressure). 

We start with the case \eqref{num:case1}. 
Figure~\ref{fig:case1_mu_mp} contains the exact and recovery solutions obtained for different numbers of measurements and with a subdivision corresponding to $n=8$ refinements. Note that the solution to \eqref{def:pb1} in $H^1_0(\Omega)^2\times L^2_0(\Omega)$ vanishes in the case \eqref{num:case1} since $\vf=\mathbf{0}$.
The recovery errors
$$
\text{err}_u:=\| \vu - \widehat \vu_h \|_{H^1(\Omega)}, \quad  \text{err}_p:=\| p - \widehat p_h \|_{H^1(\Omega)} \quad \textrm{and}\quad  \text{err}:=\sqrt{\text{err}_u^2+\text{err}_p^2}
$$
are reported in Table~\ref{tab:case1_mu_mp}. Here $(\widehat \vu_h,\widehat p_h)$ is the output of the recovery algorithm. 

\begin{figure}[htbp]
\begin{center}
\begin{tabular}{ccccc}
\includegraphics[trim={11cm 5cm 10cm 4cm},clip,width=0.16\textwidth]{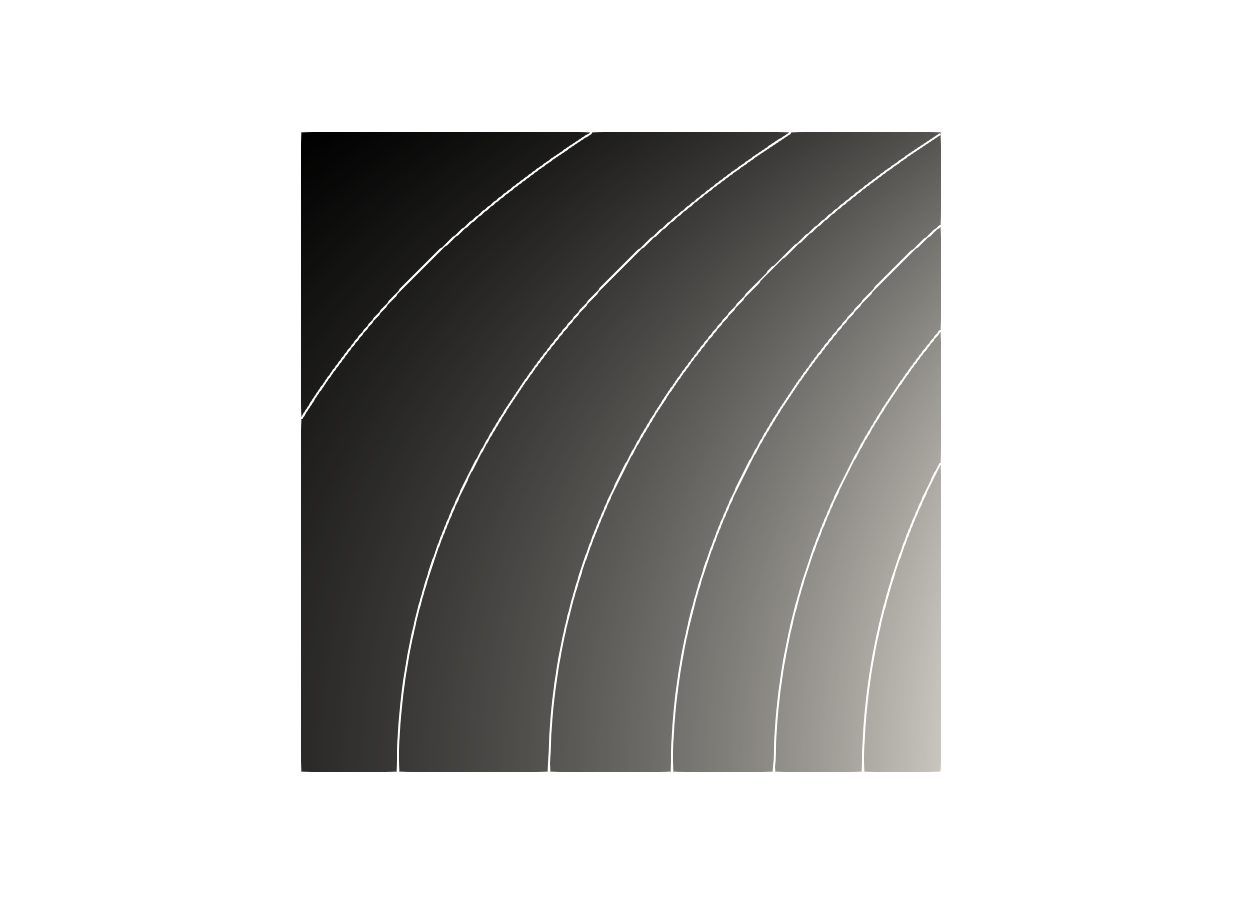} & 
\includegraphics[trim={11cm 5cm 10cm 4cm},clip,width=0.16\textwidth]{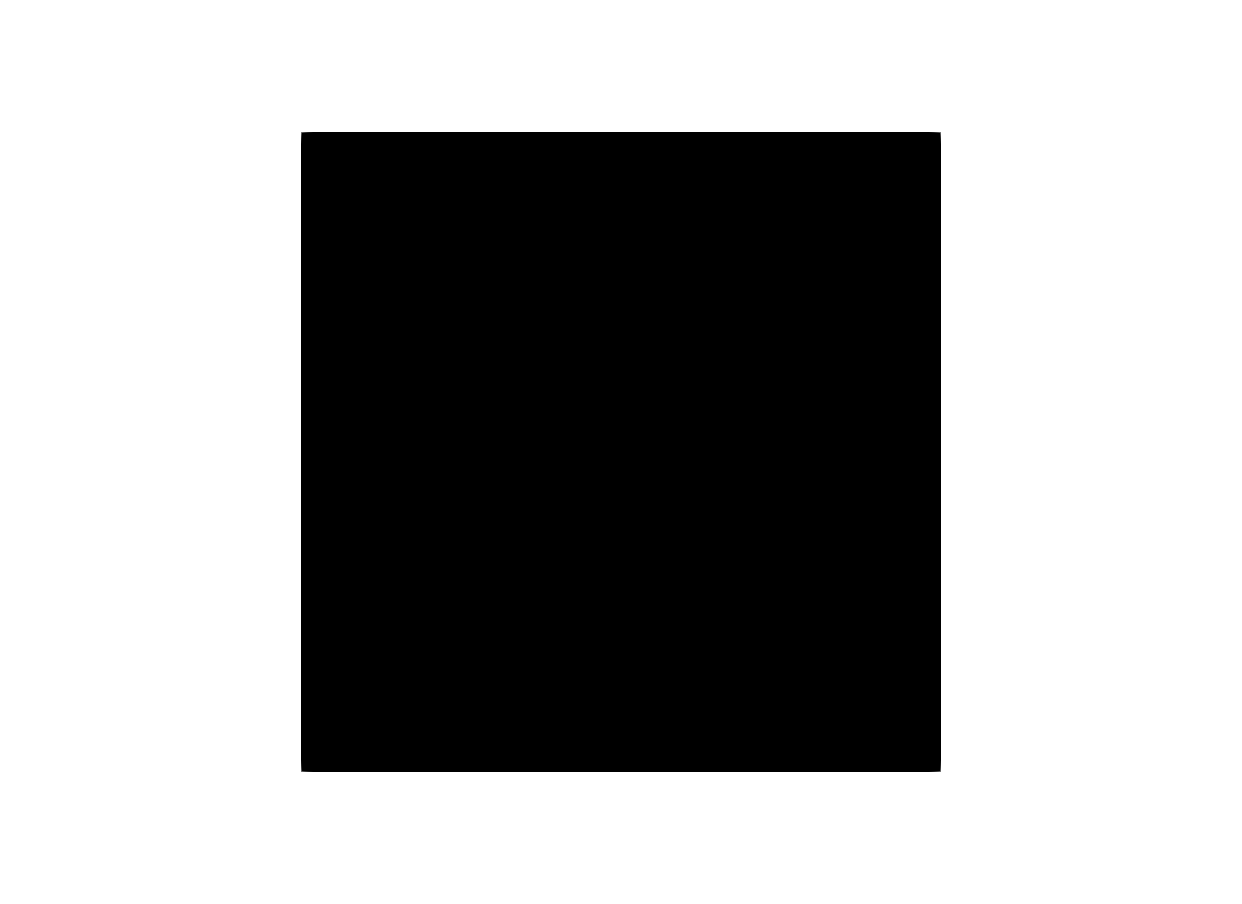} &
\includegraphics[trim={11cm 5cm 10cm 4cm},clip,width=0.16\textwidth]{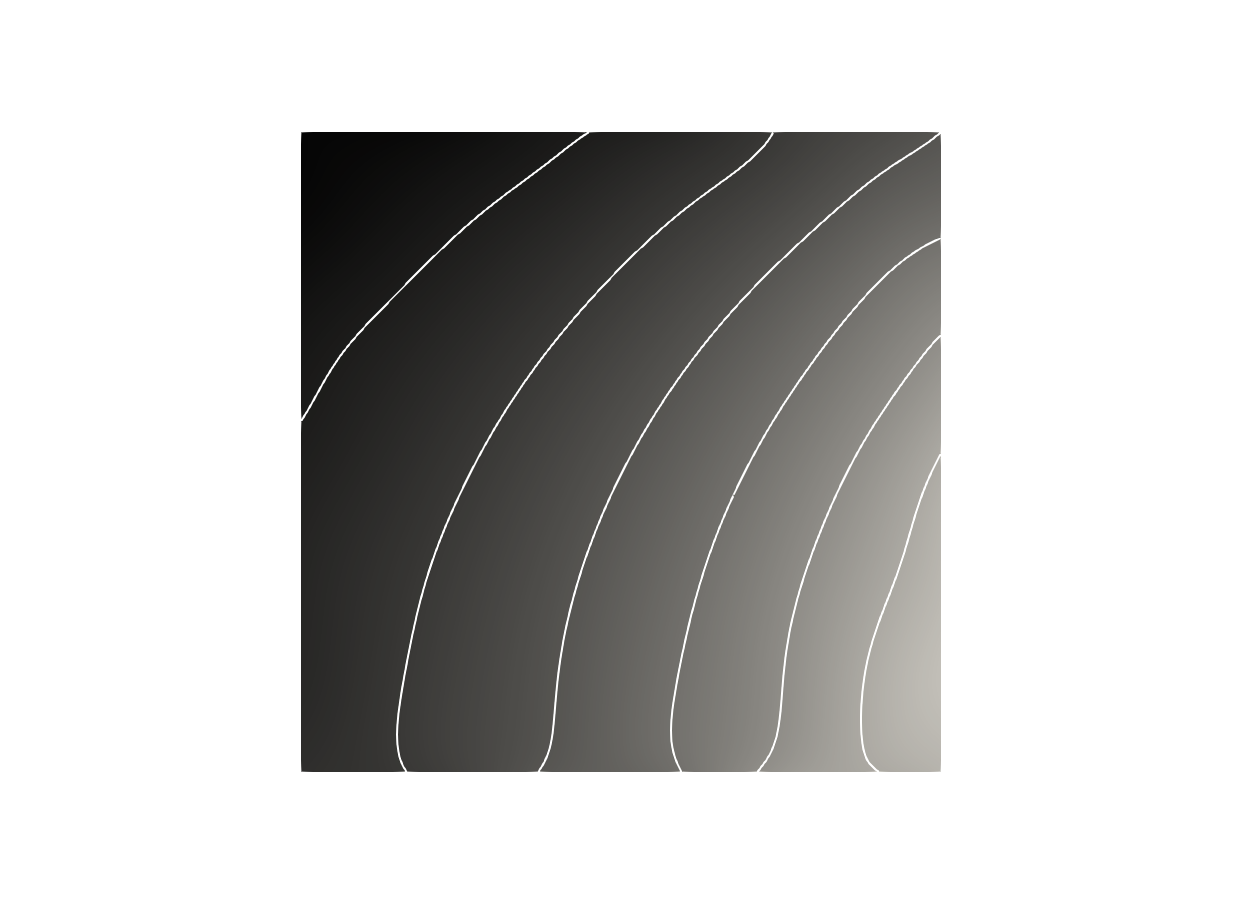}&
\includegraphics[trim={11cm 5cm 10cm 4cm},clip,width=0.16\textwidth]{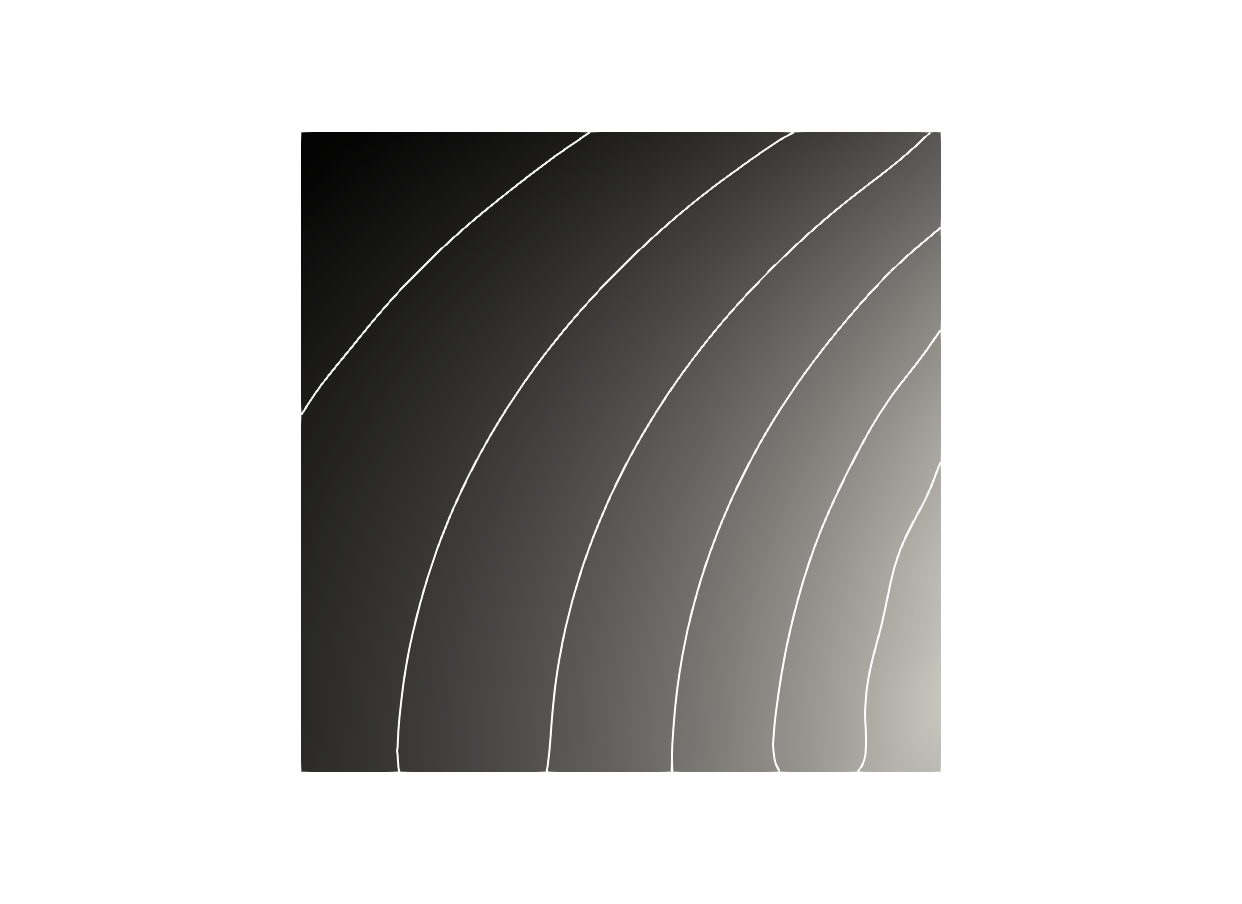}&
\includegraphics[trim={11cm 5cm 10cm 4cm},clip,width=0.16\textwidth]{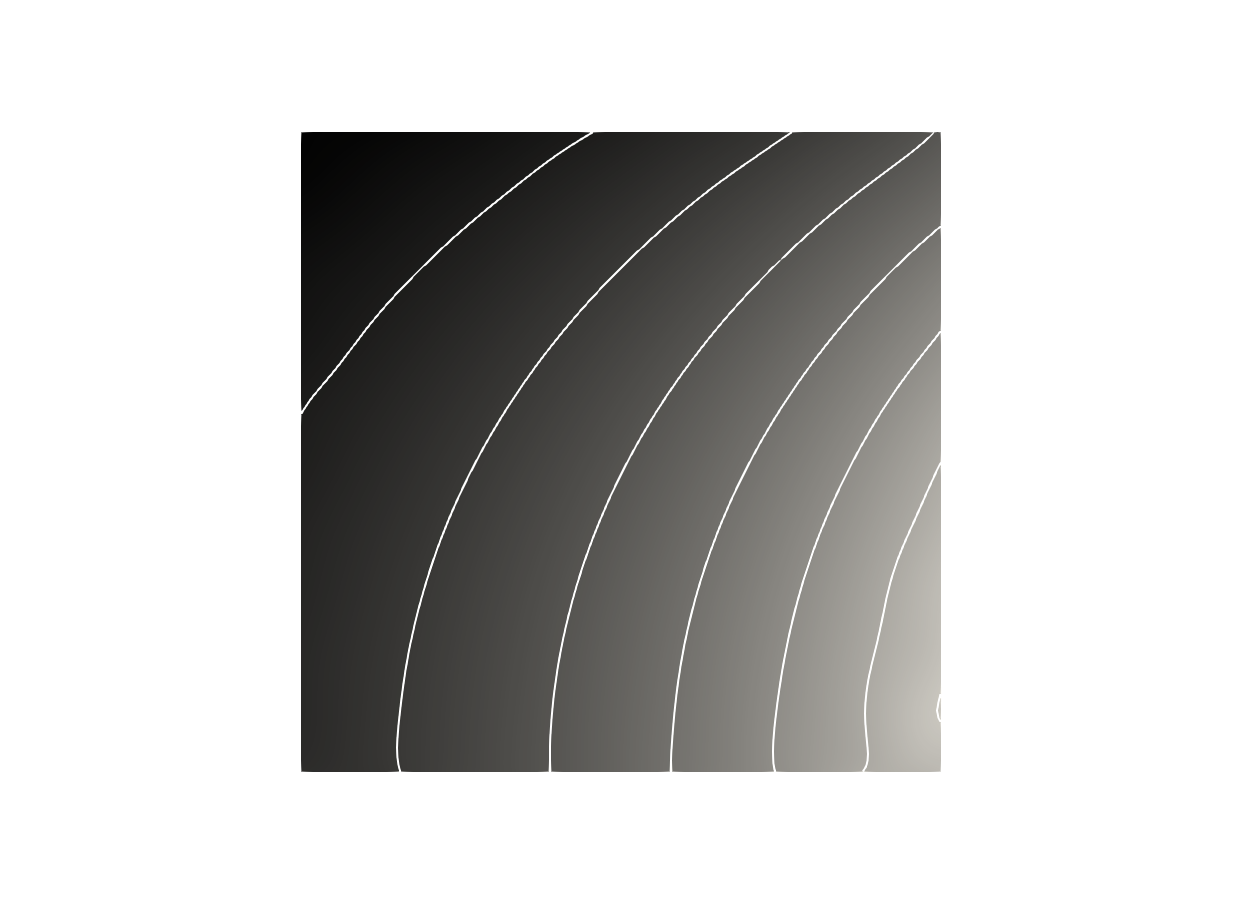}\\
$u_1\in[0.54,2.72]$ & $\widehat u_1\in[-0.15,0.15]$ & $\widehat u_1\in[0.60,2.64]$ & $\widehat u_1\in[0.57,2.71]$  & $\widehat u_1\in[0.58,2.73]$ \\[1.5ex]
\includegraphics[trim={11cm 5cm 10cm 4cm},clip,width=0.16\textwidth]{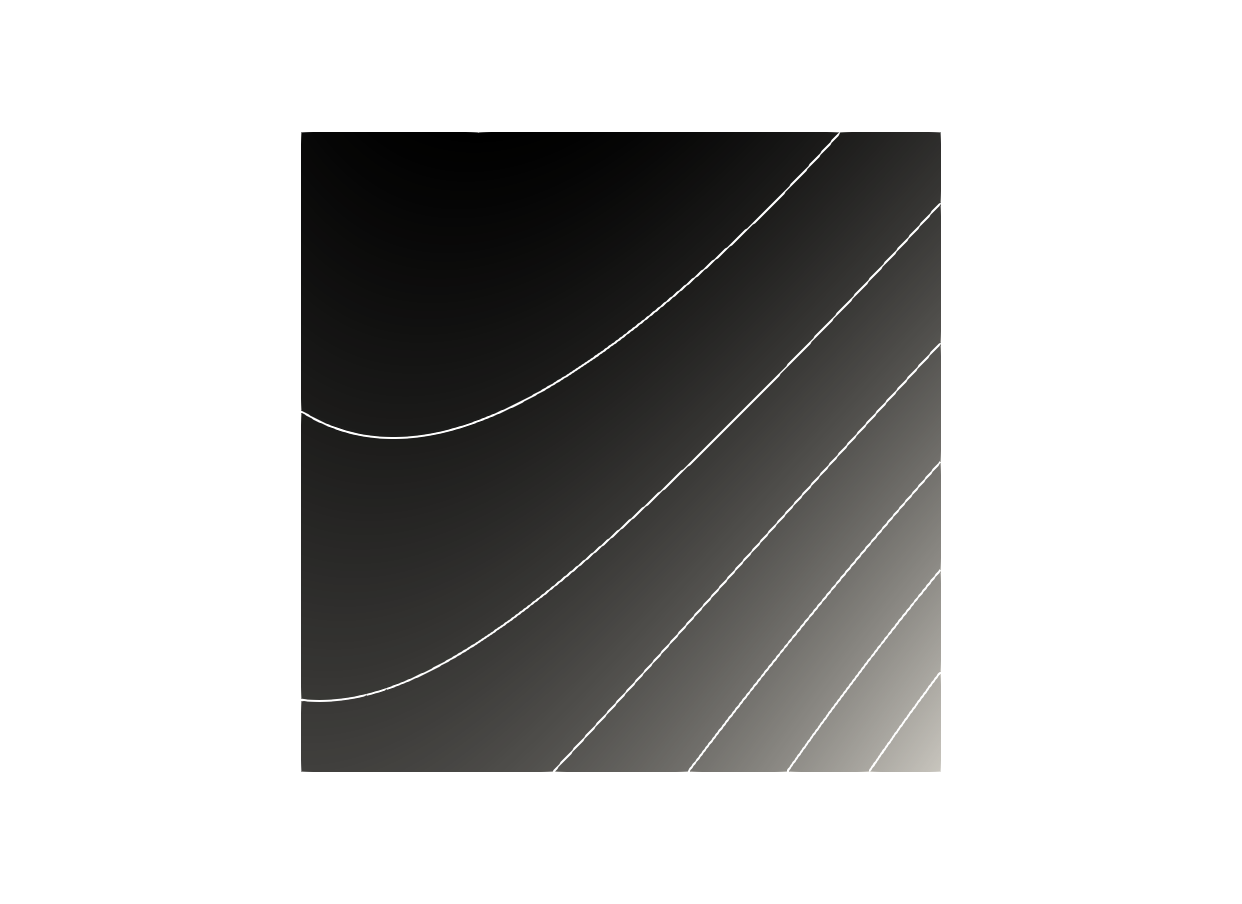} & 
\includegraphics[trim={11cm 5cm 10cm 4cm},clip,width=0.16\textwidth]{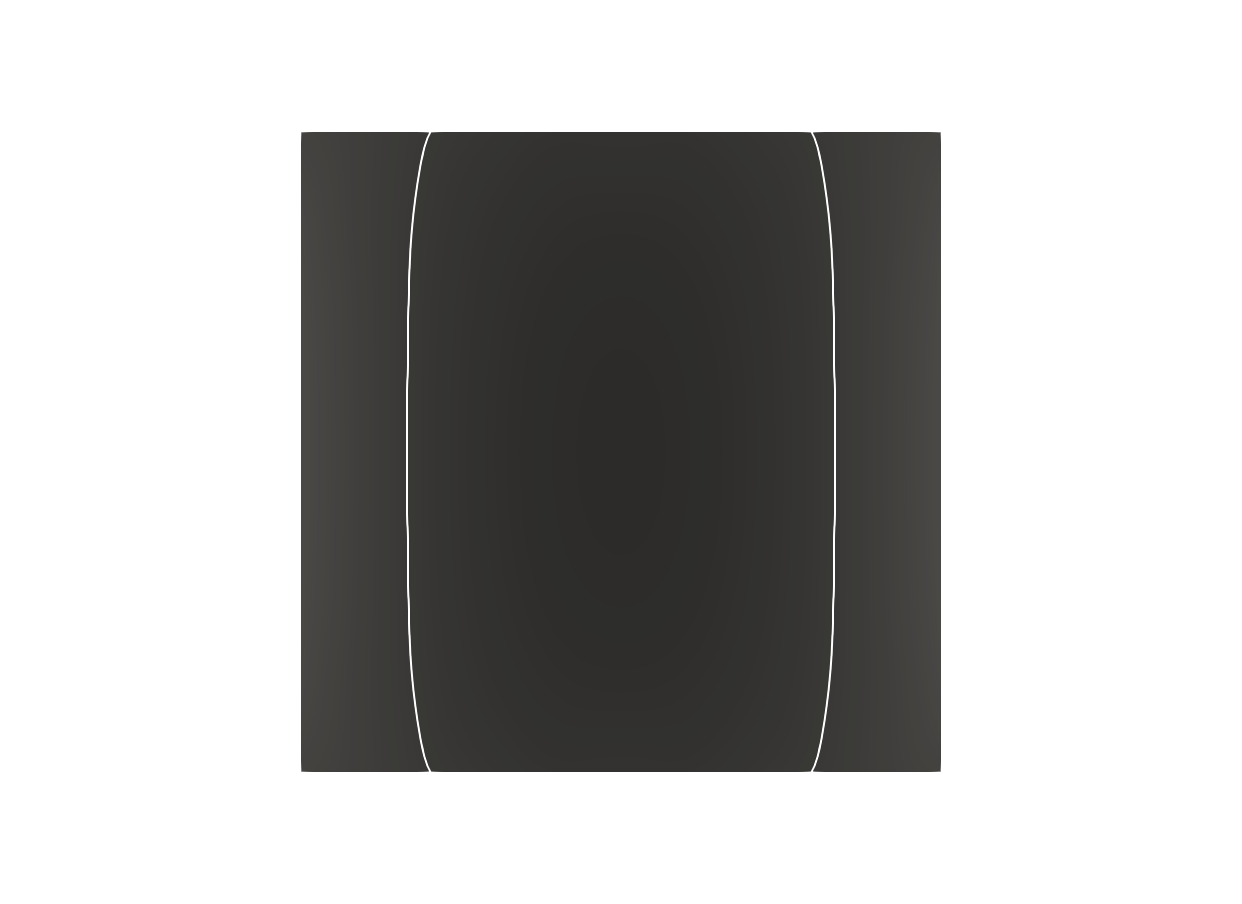} &
\includegraphics[trim={11cm 5cm 10cm 4cm},clip,width=0.16\textwidth]{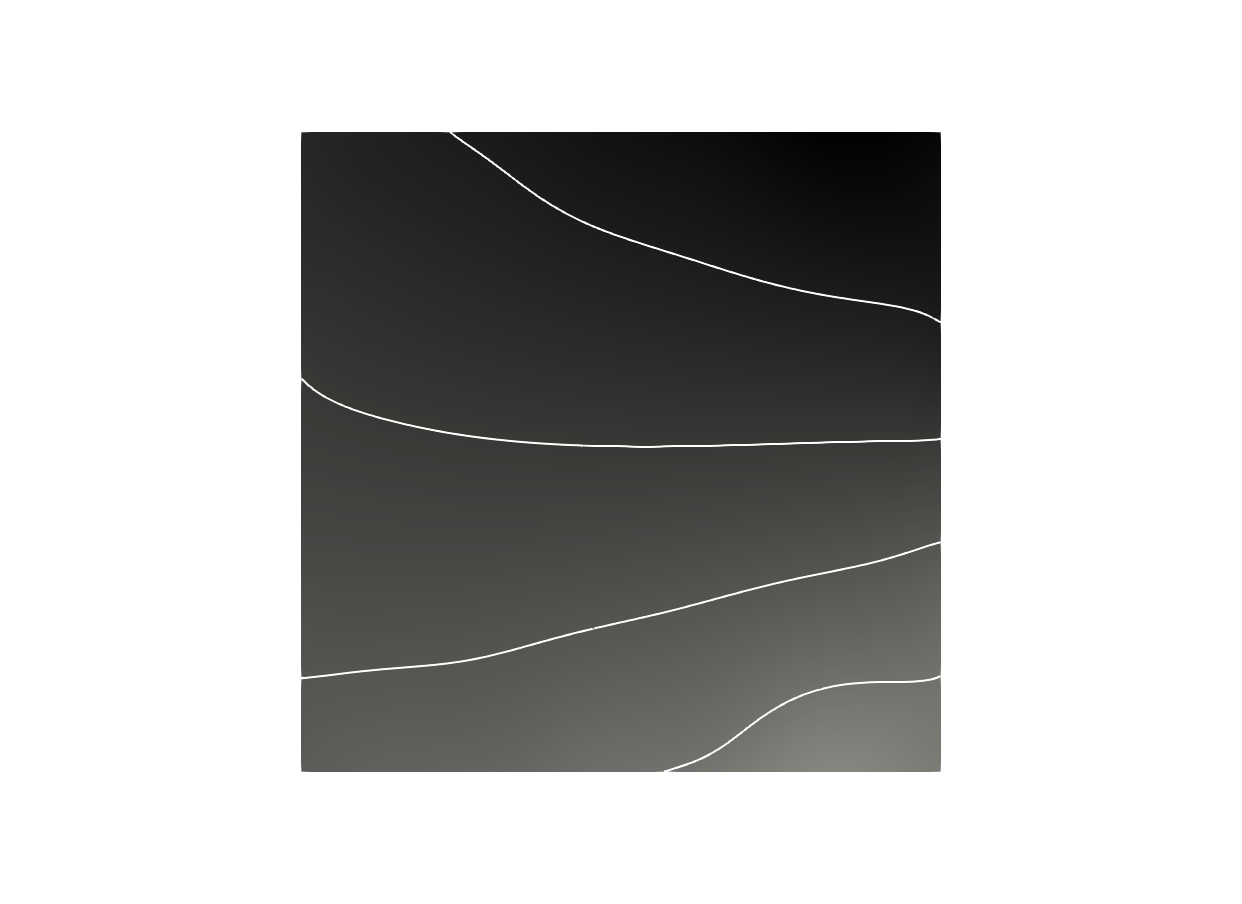}&
\includegraphics[trim={11cm 5cm 10cm 4cm},clip,width=0.16\textwidth]{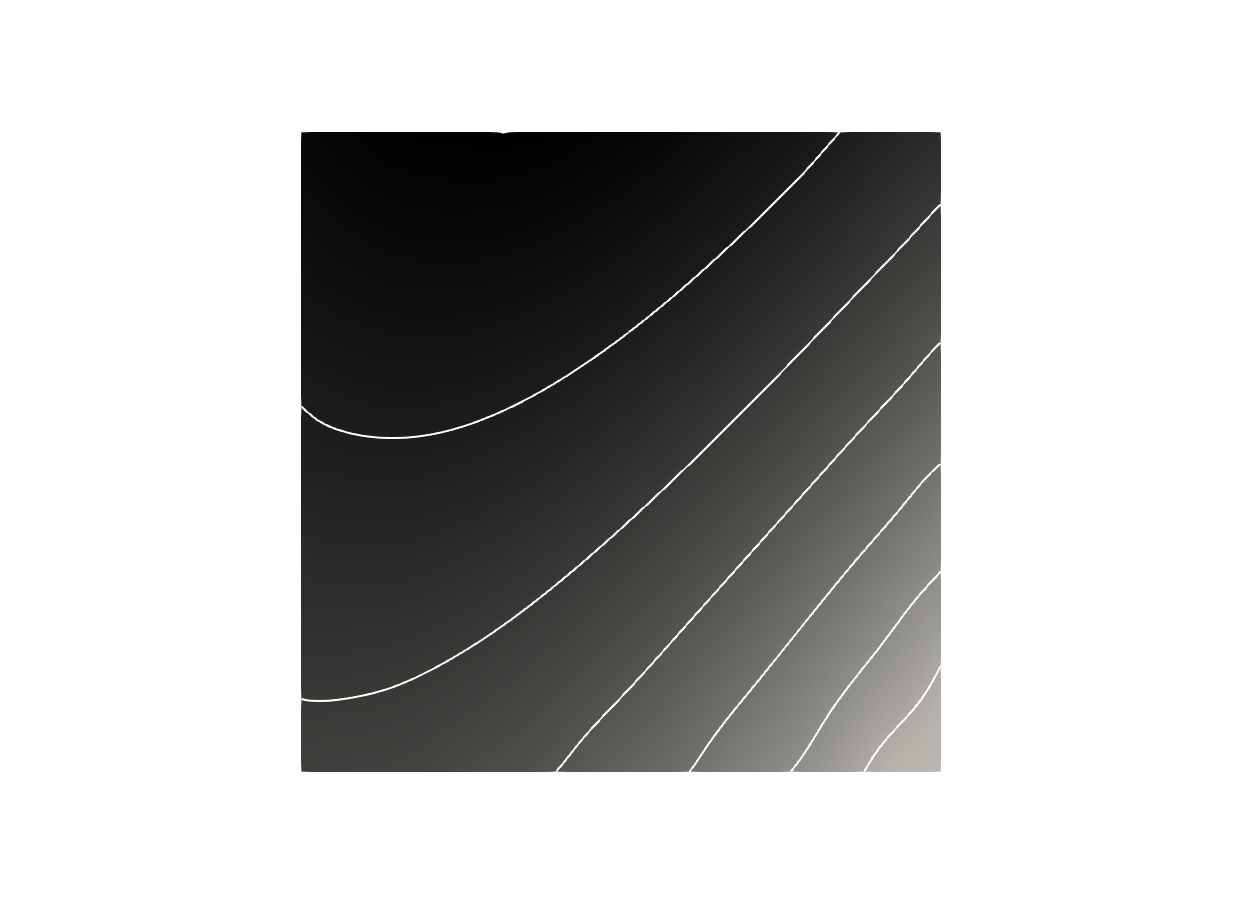}&
\includegraphics[trim={11cm 5cm 10cm 4cm},clip,width=0.16\textwidth]{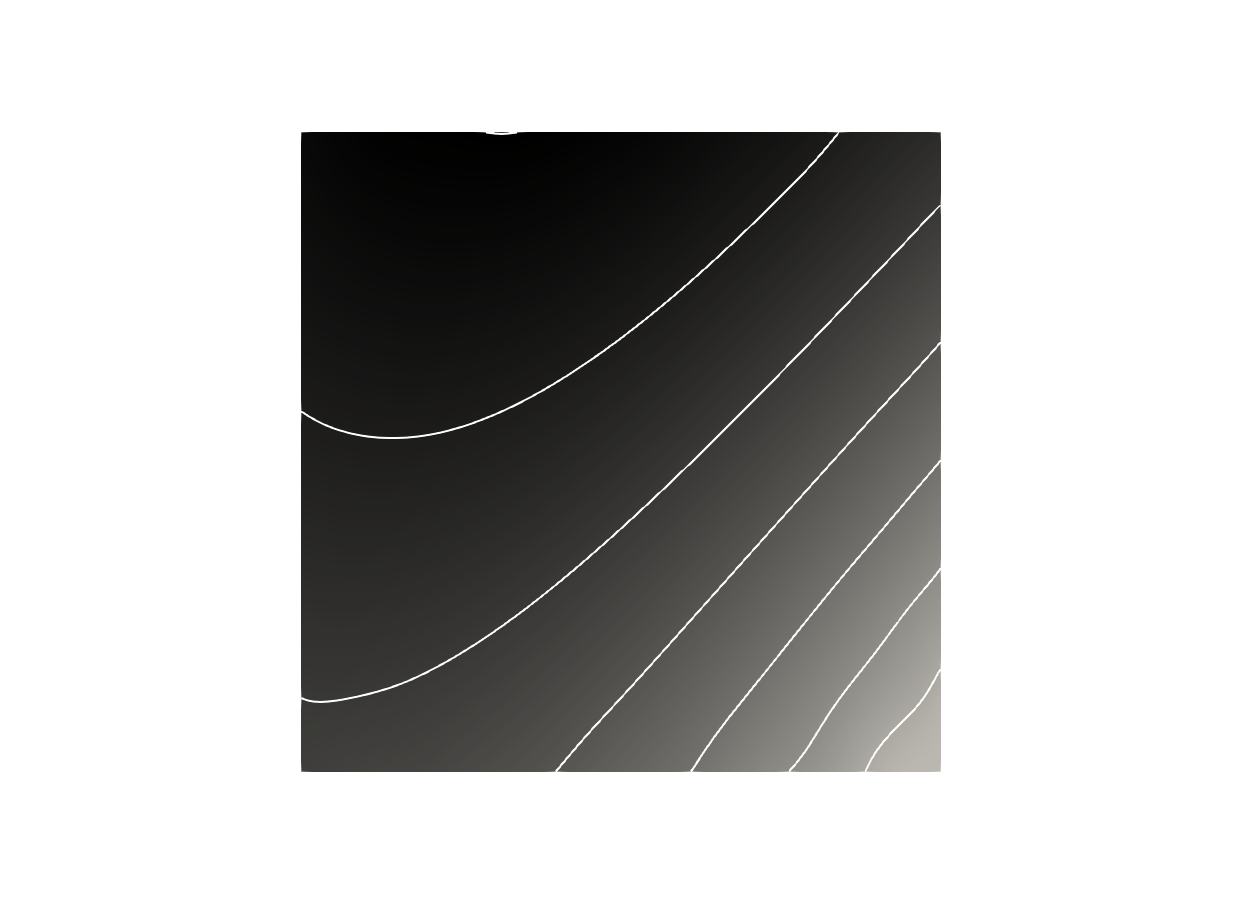}\\
$u_2\in[-0.96,2.00]$ & $\widehat u_2\in[-0.31,0.14]$ & $\widehat u_2\in[-0.93,1.04]$ & $\widehat u_2\in[-0.96,1.79]$  &$\widehat u_2\in[-0.96,1.81]$\\[1.5ex]
\includegraphics[trim={11cm 5cm 10cm 4cm},clip,width=0.16\textwidth]{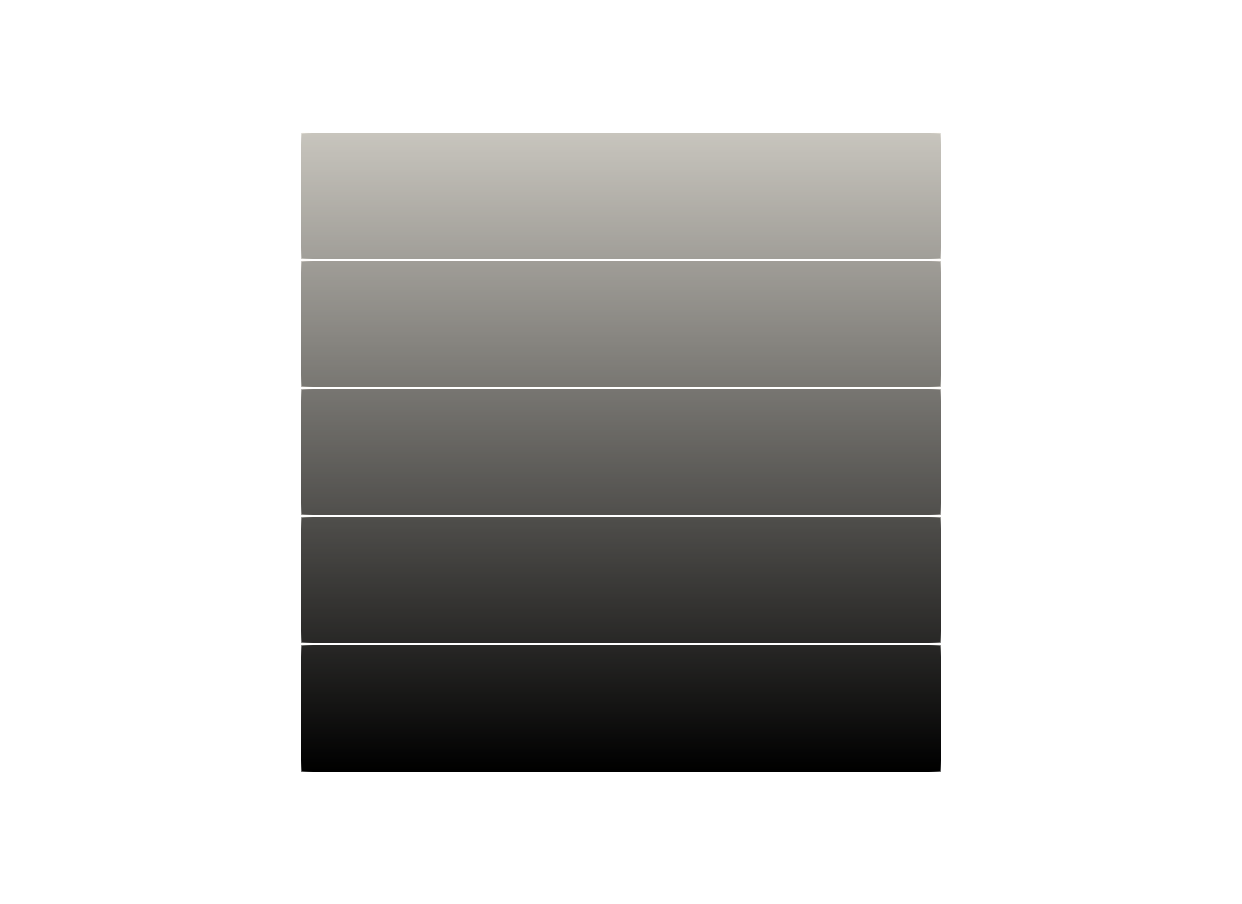} & 
\includegraphics[trim={11cm 5cm 10cm 4cm},clip,width=0.16\textwidth]{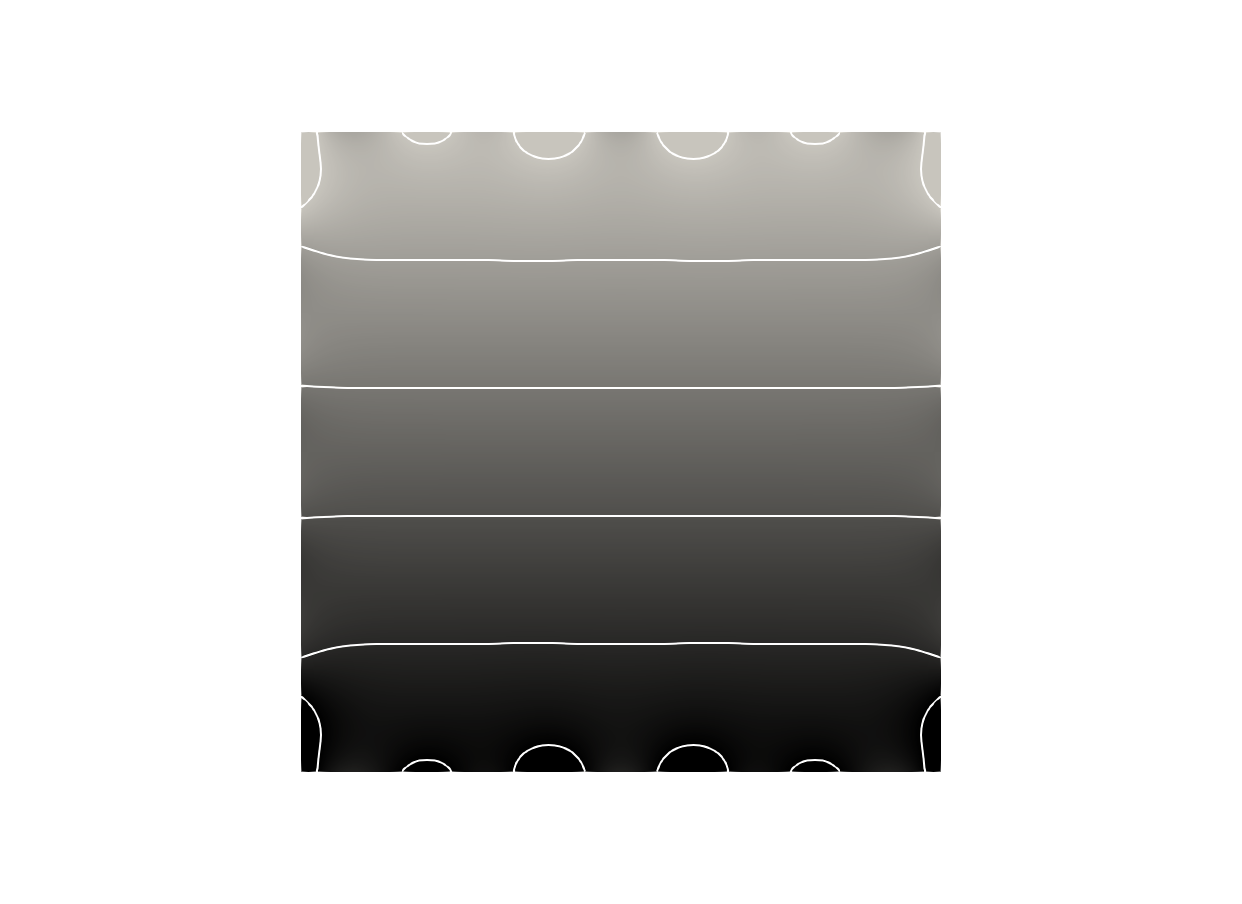} &
\includegraphics[trim={11cm 5cm 10cm 4cm},clip,width=0.16\textwidth]{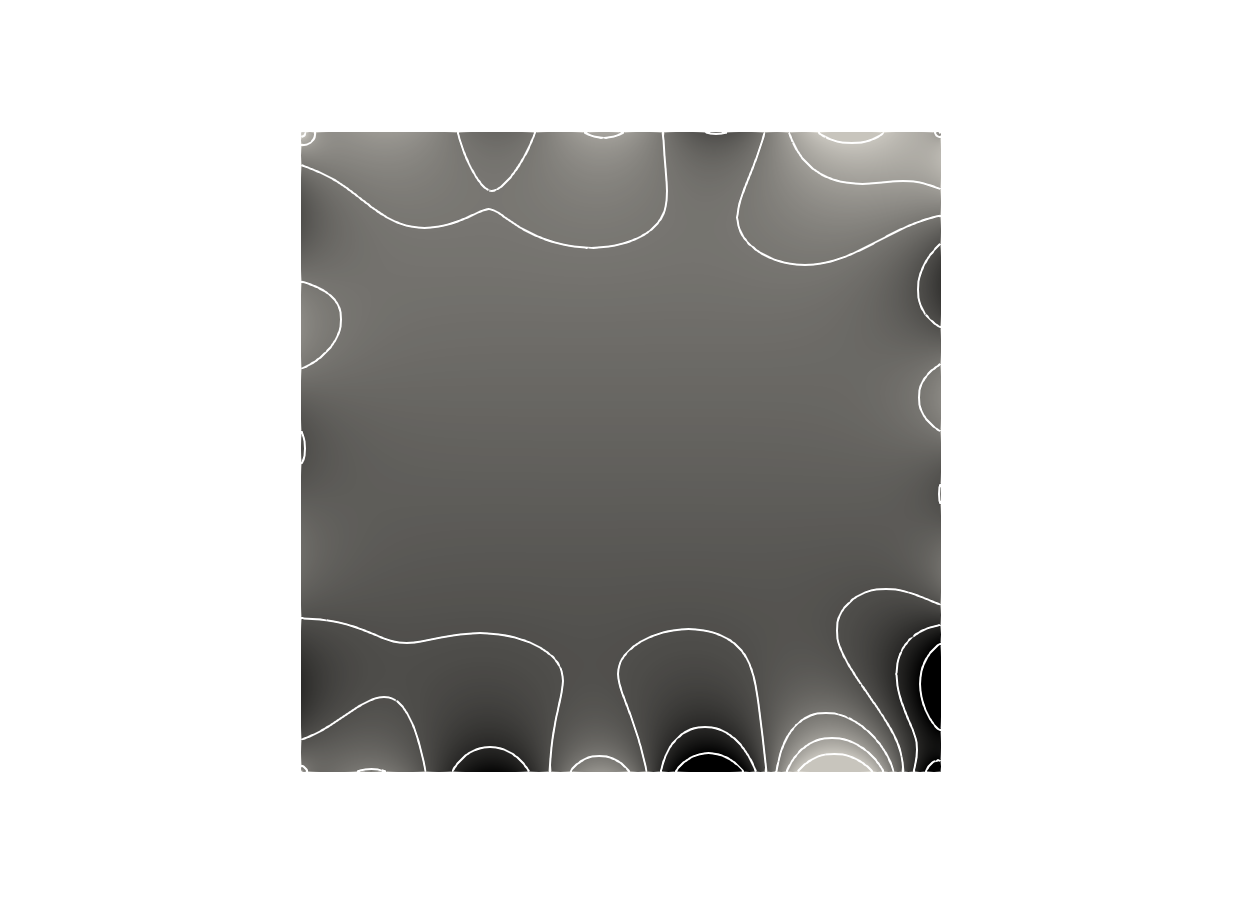}&
\includegraphics[trim={11cm 5cm 10cm 4cm},clip,width=0.16\textwidth]{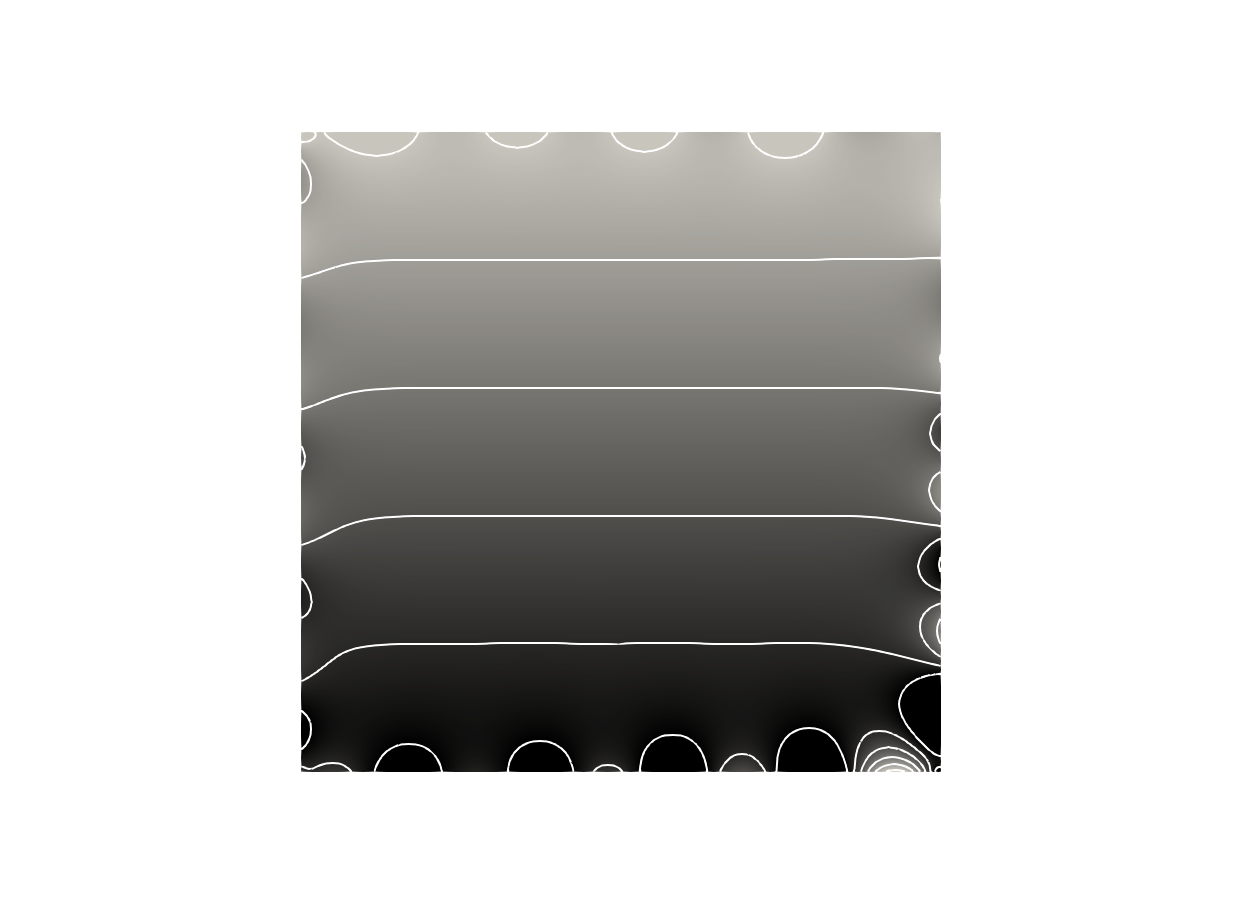}&
\includegraphics[trim={11cm 5cm 10cm 4cm},clip,width=0.16\textwidth]{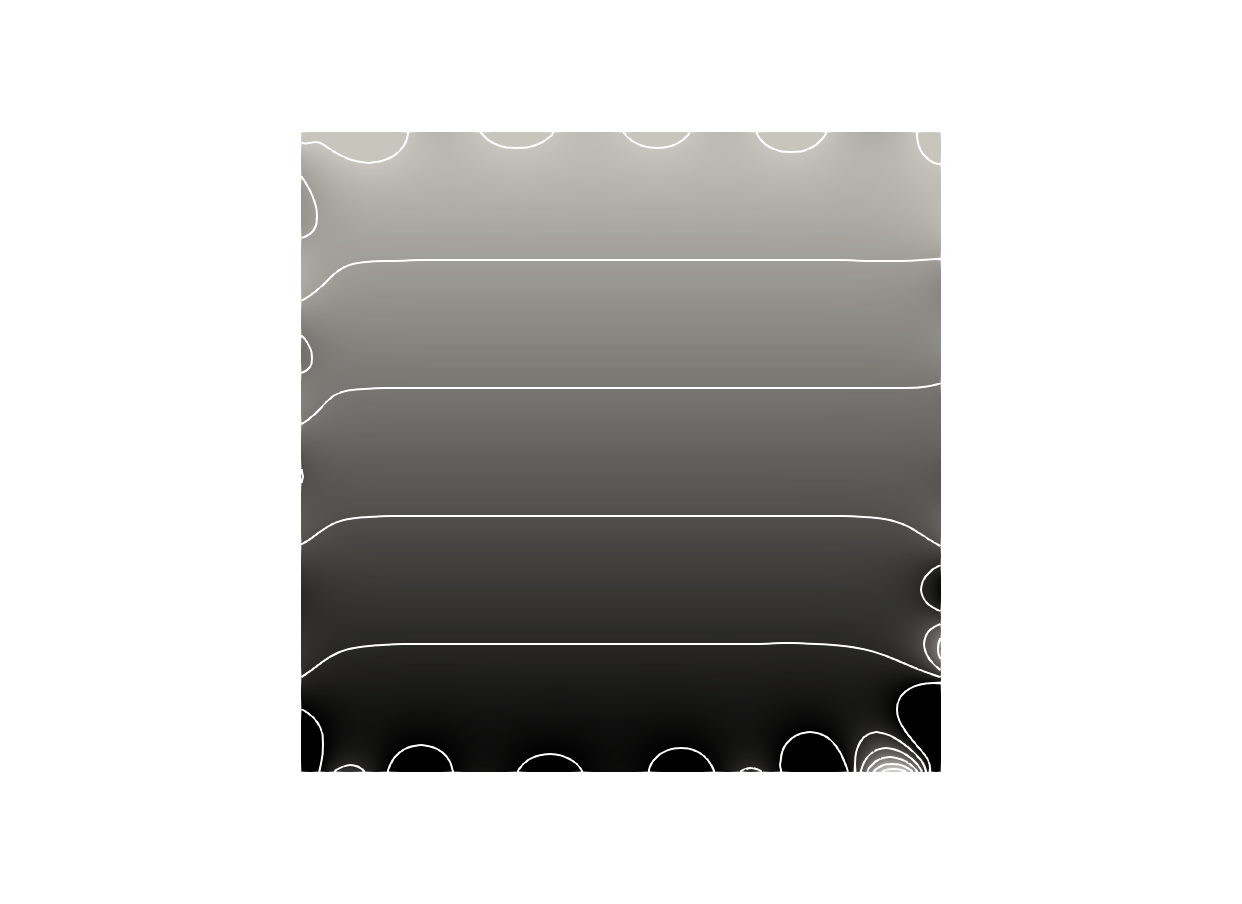}\\
$p\in[-2.00,2.00]$ & $\widehat p\in[-3.39,3.39]$ & $\widehat p\in[-4.50,4.11]$ & $\widehat p\in[-5.30,4.43]$ & $\widehat p\in[-7.28,4.01]$
\end{tabular}
\end{center}
\caption{Exact solution (col 1) and recovery solution (col 2 to 5) for the case \eqref{num:case1} when $(m_{u_1},m_{u_2},m_p)$ is equal to $(0,0,36)$, $(36,0,0)$, $(36,36,0)$ and $(36,36,36)$ using $n=8$ refinements for the mesh. From top to bottom, the rows correspond to the recovery of $u_1$, $u_2$ and $p$. For the color map, black corresponds to $\min_{\vx\in\Omega}v(\vx)$ and white to $\max_{\vx\in\Omega}v(\vx)$ for $v\in\{u_1,u_2,p\}$, and the constants for the level curves are the same in each row.} \label{fig:case1_mu_mp}
\end{figure}

\begin{table}[htbp]
    \centering
    \begin{tabular}{|c|c|c|c|c|c|c|c|c|c|}
    \hline
$(m_{u_1},m_{u_2},m_p)$ & (0,0,36) & (36,0,0) & (36,36,0) & (36,36,36) \\ 
\hline
$\text{err}_u$ & 3.12811 & 1.59357 & 0.28662 & 0.27723 \\
\hline
$\text{err}_p$ & 0.08480 & 0.87312 & 0.23818 & 0.22850 \\
\hline
$\text{err}$ & 3.12925 & 1.81708 & 0.37267 & 0.35926 \\
\hline
    \end{tabular}
    \caption{Recovery error by the discrete approximations depicted in Figure~\ref{fig:case1_mu_mp}.}
    \label{tab:case1_mu_mp}
\end{table}

The results for the recovery of the velocity are as expected. Indeed, Figure~\ref{fig:case1_mu_mp} indicates that the velocity is poorly recovered when there is no measurement of the velocity (col 2), only the first component $u_1$ is accurately recovered when we have measurements of the first component only (col 3), while having measurements of both components leads to an accurate recovery of the velocity field (cols 4 and 5). Regarding the recovery of the pressure, we observe that the recovered solution is accurate inside the domain but present oscillations near the boundary. Moreover, the quality of the recovered pressure deteriorates when adding velocity measurements, namely the oscillations are amplified. Note that this behaviour is observed in most numerical tests performed and is not ruled out by Theorem~\ref{thm:OR}. However, we expect from Theorem~\ref{thm:OR} that up to the space discretization error, the total recovery error $\textrm{err}$ does not increase as the number of measurements increases; which is indeed observed in Table~\ref{tab:case1_mu_mp} (last row).

We report in Table~\ref{tab:case1_errors} the recovery errors $\text{err}_u$ and $\text{err}_p$  for a mesh with $n=6$ refinements. We set $m_{u_1}=m_{u_2}=m_u$ (i.e., we use the same number of measurements of each component of the velocity field) and analyze the effect of increasing the number of velocity and pressure measurements. We observe that when $m_u=0$ (no measurements of the velocity), the pressure recovery error decreases as the number of pressure measurement increases while the velocity recovery error is more or less constant. On the other extreme, when $m_p=0$ (no measurements of the pressure), we see that increasing the number of velocity measurements has a positive impact on both the velocity and the pressure. Moreover, we observe that when $m_p>0$, the pressure recovery error first increases and then decreases when more velocity measurements are added. Again the total recovery errors decrease as the number of measurements increases.

\begin{table}[htbp]
    \centering
    \begin{tabular}{|c|c|c|c|c|c||c|c|c|c|c|c|}
    \cline{2-11}
\multicolumn{1}{c|}{} & \multicolumn{5}{c||}{$\text{err}_u$} & \multicolumn{5}{|c|}{$\text{err}_p$}\\
 \hline
 $m_u\setminus m_p$ & 0 & 4 & 16 & 36 & 64 & 0 & 4 & 16 & 36 & 64 \\
     \hline 
0 & --       & 3.2739  & 3.1324 & 3.1281 & 3.1287  & --    & 0.6234 & 0.2772 & 0.0849 & 0.0460\\
4 & 1.7727   & 1.6270  & 0.8222 & 0.7291 & 0.7365 & 1.4252 & 1.2337 & 0.5022 & 0.1430 & 0.0730 \\
16 & 0.6324  & 0.6203  & 0.4957 & 0.3165 & 0.2536 & 0.5073 & 0.4931 & 0.3985 & 0.2406 & 0.1517\\
36 & 0.2866  & 0.2874  & 0.2795 & 0.2772 & 0.2155 & 0.2383 & 0.2387 & 0.2333 & 0.2286 & 0.1654\\
64 & 0.2032  & 0.1977  & 0.1801 & 0.2201 & 0.1893 & 0.1657 & 0.1675 & 0.1418 & 0.1855 & 0.1443\\
\hline
    \end{tabular}
  \caption{Velocity recovery error $\text{err}_u$ and pressure recovery error $\text{err}_p$ for the case \eqref{num:case1}, for which $\|\vu\|_{H^1(\Omega)}=3.274$ and $\|p\|_{L^2(\Omega)}=1.155$, using $n=6$ refinements for the mesh.}
    \label{tab:case1_errors}
\end{table}

We now move to the case \eqref{num:case2}, for which $\vf\ne\mathbf{0}$ and thus the solution $(\vu_{\vf},p_{\vf}) \in H^1_0(\Omega)^2\times L^2_0(\Omega)$ to \eqref{def:pb1} is not trivial. Table~\ref{tab:case2_errors} reports the recovery errors $\text{err}_u$ and $\text{err}_p$ for various numbers of velocity and pressure measurements, again using a mesh with $n=6$ refinements. The results are similar to those obtained for the case \eqref{num:case1}, except that now the pressure recovery error decreases monotonically as $m_u$ increases when $m_p\le 16$. 

\begin{table}[htbp]
    \centering
    \begin{tabular}{|c|c|c|c|c|c||c|c|c|c|c|c|}
    \cline{2-11}
\multicolumn{1}{c|}{} & \multicolumn{5}{c||}{$\text{err}_u$} & \multicolumn{5}{|c|}{$\text{err}_p$}\\
 \hline
 $m_u\setminus m_p$ & 0 & 4 & 16 & 36 & 64 & 0 & 4 & 16 & 36 & 64 \\
     \hline 
0 & --     & 3.1996 & 1.5363 & 1.1077 & 1.0743 & --     & 2.0747 & 0.7921 & 0.2415 & 0.1253 \\
4 & 1.1007 & 1.1007 & 0.8068 & 0.5965 & 0.5967 & 0.7089 & 0.7089 & 0.5822 & 0.1992 & 0.1055\\
16 & 0.6807 & 0.6807 & 0.5893 & 0.3604 & 0.2915 & 0.5395 & 0.5395 & 0.4511 & 0.2721 & 0.1901  \\
36 & 0.3114 & 0.3114 & 0.3190 & 0.3344 & 0.2489 & 0.2618 & 0.2618 & 0.2675 & 0.2792 & 0.1888\\
64 & 0.2413 & 0.2413 & 0.2258 & 0.2611 & 0.2394 & 0.1875 & 0.1875 & 0.1705 & 0.2228 & 0.1876 \\
\hline
    \end{tabular}
    \caption{Velocity recovery error $\text{err}_u$ and pressure recovery error $\text{err}_p$ for the case \eqref{num:case2}, for which $\|\vu\|_{H^1(\Omega)}=3.220$ and $\|p\|_{L^2(\Omega)}=1$, using $n=6$ refinements for the mesh.}
    \label{tab:case2_errors}
\end{table}

\subsubsection{Influence of the regularity parameter $s$} \label{sec:vary_s}

We now study the effect of the parameter $s$, which dictates the regularity assumption for the trace of the velocity on $\Gamma$.
We use the same setting as in the previous section but focus on the case \eqref{num:case2}. 
However, for non integral $s$, we must resort to the algorithm described in Section~\ref{subsec:fractional_s} and we use \eqref{eqn:Balak_sinc_Twh} with $k=0.4$ for the approximation of $A_s^{-1}$.
Table~\ref{tab:case2_square_vary_s} reports the recovery errors for different numbers of measurements and different values of $s$. 
Note that the functions to be recovered are smooth inside $\Omega$. However, since $\Omega$ is a square domain, the value of $s$ is limited to $s<3/2$ for the definition of $H^s(\Gamma)$.
We observe in all cases that the larger the value of $s$, the smaller the recovery error. Moreover, increasing the value of $s$ has the effect of reducing the oscillations in the pressure that can be observed near the boundary of the domain, see Figure~\ref{fig:case2_pressure_vary_s} for an illustration.

\begin{table}[htbp]
\centering
\begin{tabular}{|c|c|c|c|c|c|c|}
    \cline{2-7}
\multicolumn{1}{c}{ } & \multicolumn{2}{|c|}{$m_u=0$, $m_p=64$} & \multicolumn{2}{|c|}{$m_u=64$, $m_p=0$} & \multicolumn{2}{|c|}{$m_u=16$, $m_p=16$} \\	
 \hline
$s$	& $\text{err}_u$ & $\text{err}_p$ & $\text{err}_u$ & $\text{err}_p$ & $\text{err}_u$ & $\text{err}_p$ \\
\hline
0.6 & 1.44987 & 0.26324 & 0.31373 & 0.27224 & 0.70380 & 0.55222 \\
0.8 & 1.28371 & 0.17485 & 0.29098 & 0.23551 & 0.66682 & 0.51844 \\
1.0 & 1.07426 & 0.12531 & 0.24127 & 0.18747 & 0.58931 & 0.45114 \\
1.2 & 0.81662 & 0.09599 & 0.22255 & 0.18016 & 0.50884 & 0.38185 \\
1.4 & 0.62243 & 0.07565 & 0.19807 & 0.15376 & 0.44041 & 0.32288 \\
\hline
\end{tabular}
\caption{Velocity recovery error $\text{err}_u$ and pressure recovery error $\text{err}_p$ for the case \eqref{num:case2} using $n=6$ refinements for the mesh and different values of $m_u$, $m_p$ and $s$.}
    \label{tab:case2_square_vary_s}
\end{table}

\begin{figure}[htbp]
    \centering
    \includegraphics[trim={11cm 5cm 10cm 4cm},clip,width=0.16\textwidth]{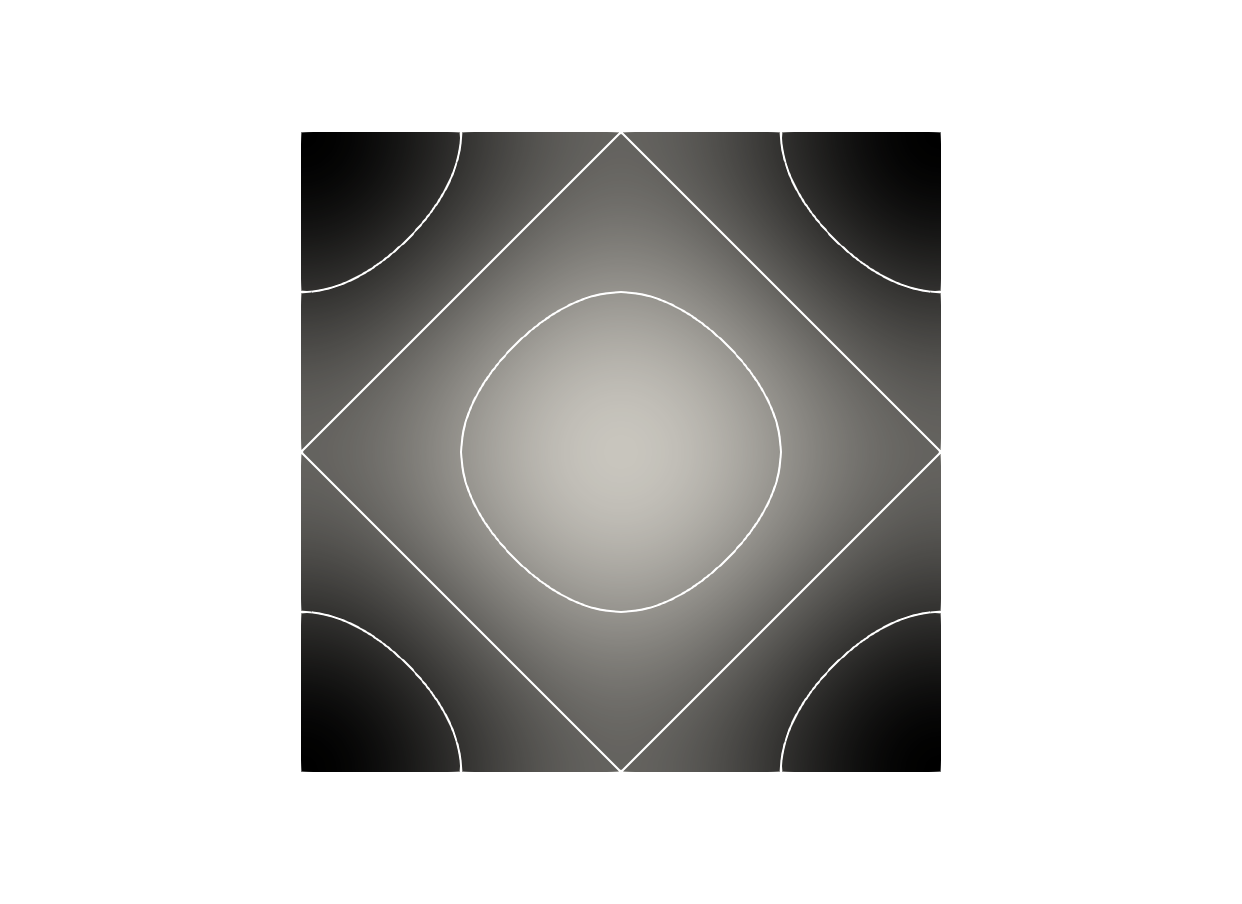}    
    \qquad \includegraphics[trim={11cm 5cm 10cm 4cm},clip,width=0.16\textwidth]{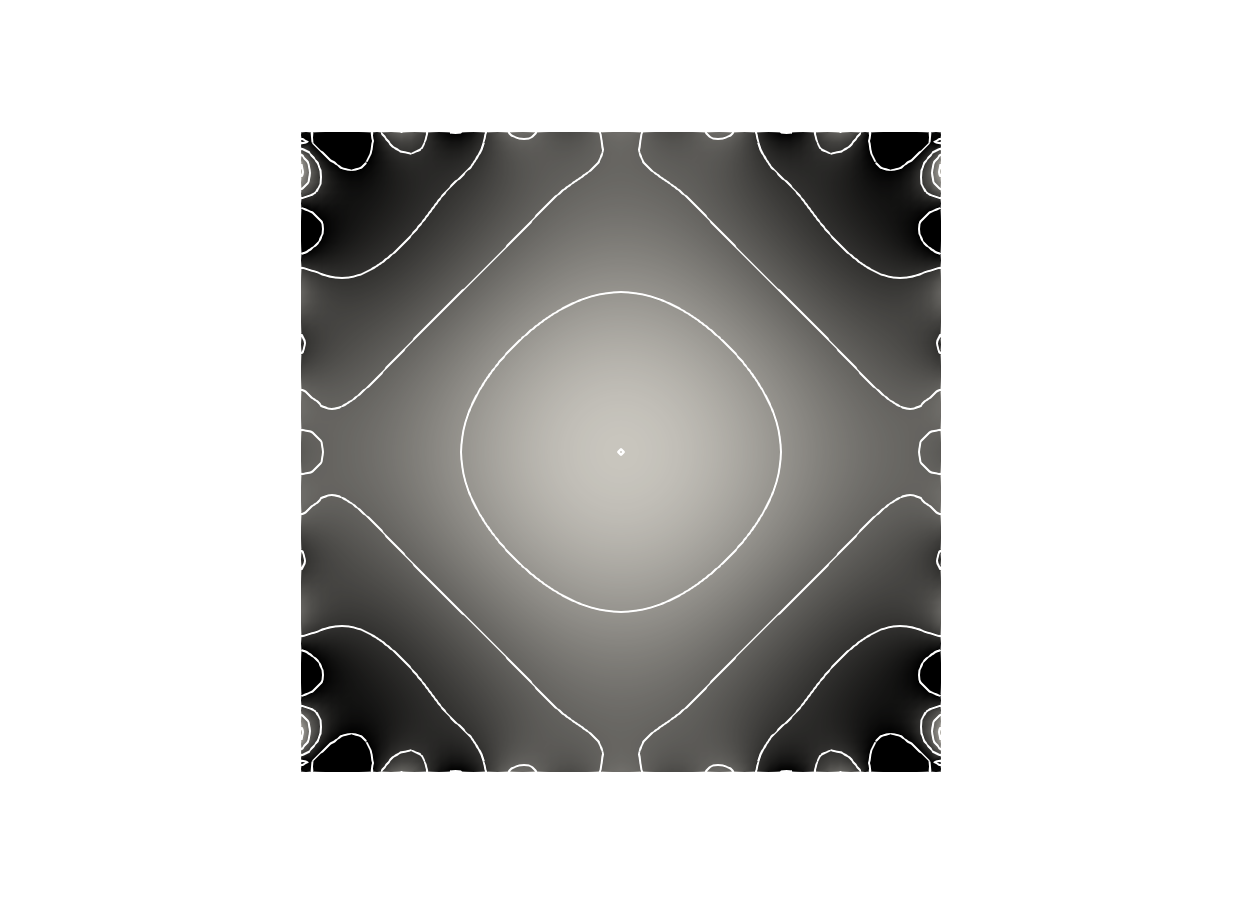}
    \qquad \includegraphics[trim={11cm 5cm 10cm 4cm},clip,width=0.16\textwidth]{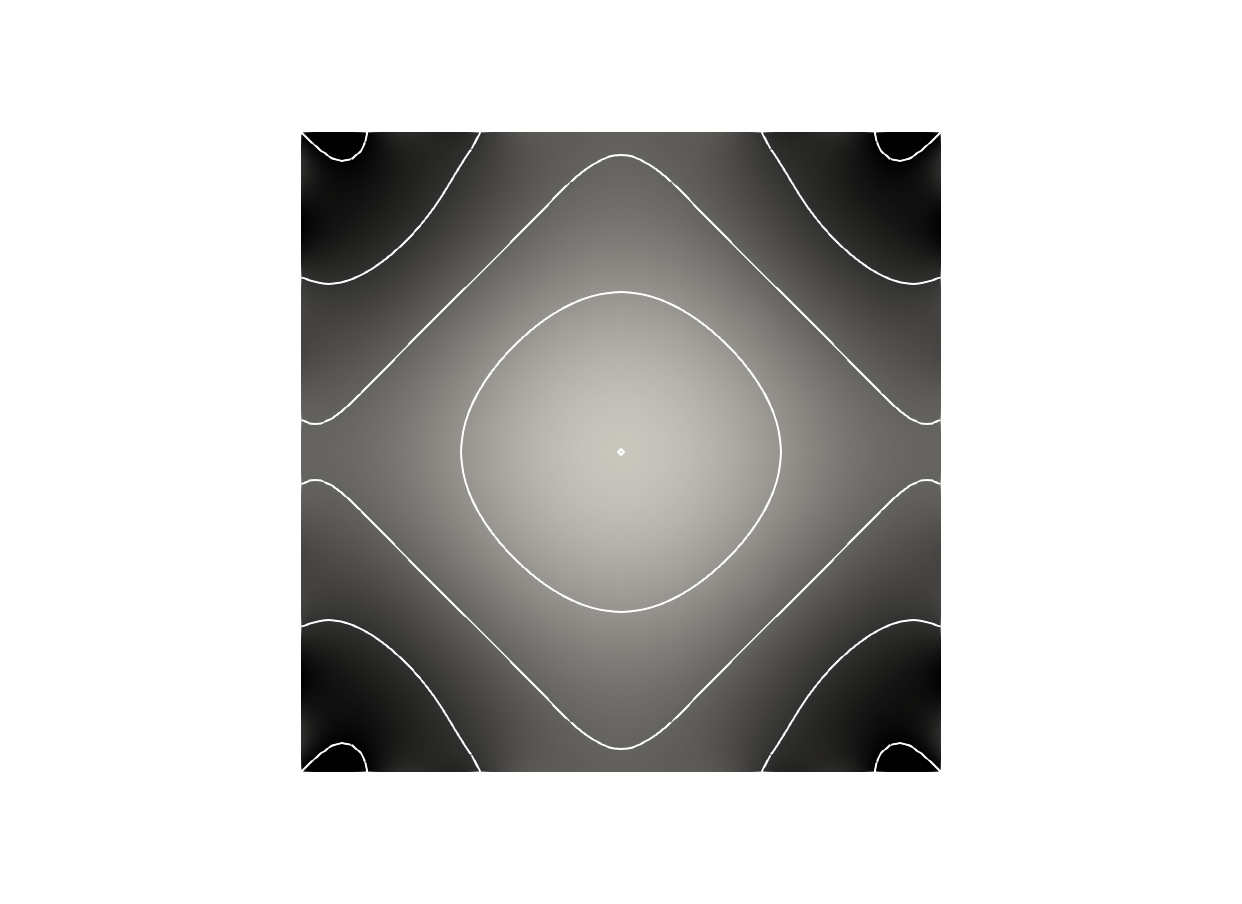}
  \caption{Recovery of the pressure for the case \eqref{num:case2} using $n=6$ refinements for the mesh: exact solution (left) and recovery solution with $m_u=0$ and $m_p=64$ using $s=0.6$ (middle) and $s=1.4$ (right). For the color map, black corresponds to $\min_{\vx\in\Omega}p(\vx)=-2$ and white to $\max_{\vx\in\Omega}p(\vx)=2$. Note that when $s=0.6$, $\widehat p\in[-5.14,2.00]$ while $\widehat p \in[-2.86,2.00]$ when $s=1.4$.}
    \label{fig:case2_pressure_vary_s}
\end{figure}

\subsection{Square domain with a hole} \label{sec:square_hole}

In this subsection, we consider an example in which partial knowledge of the boundary data is available. The domain is the unit square with a hole in the middle, namely $\Omega=(0,1)^2\setminus B$,  where $B$ is the closed ball of radius $0.1$ centered at $(0.5,0.5)$. We assume that the velocity is known on the outside boundary but unknown on the boundary of $B$. The functions to be recovered are the ones in \eqref{num:case2}, but, due to the hole, the pressure does not have a vanishing mean value. Moreover, we have $\|\vu\|_{H^1(\Omega)}=3.127$ and $\|p\|_{L^2(\Omega)}=0.941$.
For the measurements, we consider the Gaussian functionals \eqref{def:lambda_z} with centers as in \eqref{eqn:z_Gauss} but discarding the points that falls inside $B$.
We present results for two different uniform subdivisions: $n=4$ refinements corresponding to 10240 quadrilateral, 82944 degrees of freedom for $\vu$, and 10496 for $p$; $n=5$ refinements corresponding to 40960 quadrilaterals, 329728 degrees of freedom for the velocity, and 41472 for the pressure. Note that we use do not use curved elements but rather a piecewise linear approximation of the circle. 

The velocity and pressure recovery errors for different numbers of measurements when using $n=4$ and $n=5$ are provided in Table~\ref{tab:case2_hole_n4} and \ref{tab:case2_hole_n5}, respectively. We find that with $n=4$ the errors $\text{err}_u$ and $\text{err}_p$ reach a plateau at about $2.45\cdot 10^{-3}$ and $1.91\cdot 10^{-4}$, respectively, after which increasing the number of measurements does not decrease the error. Since smaller recover errors are observed with $n=5$, we conclude that these plateaus are due to the finite element error. We also observe that without measurements of the pressure ($m_p=0$), the mean value of the pressure cannot be recovered and  the algorithm constructs a pressure approximation with vanishing mean value, leading to a large recovery error $\text{err}_p$. We illustrate the behaviour of the recovery error for the pressure by comparing  two measurement configurations in Figure~\ref{fig:case2_hole_pressure}.  We observe large oscillations near the hole when using $m_u=m_p=4$ measurements. These oscillations are significantly reduced when using $m_u=m_p=16$ measurements.

\begin{table}[htbp]
    \centering
    \begin{tabular}{|c|c|c|c|c|c|}
    \cline{2-6}
    \multicolumn{1}{c|}{} & \multicolumn{5}{c|}{$\text{err}_u$}\\
    \hline
        $m_u\setminus m_p$ & 0 & 4 & 8 & 16 & 24 \\
    \hline
0 & -- & 1.1091$\cdot 10^{-0}$ & 4.1886$\cdot 10^{-3}$ & 4.2215$\cdot 10^{-3}$ & 2.4567$\cdot 10^{-3}$ \\
4 & 2.5497$\cdot 10^{-1}$ & 2.5497$\cdot 10^{-1}$ & 2.7100$\cdot 10^{-3}$ & 2.4634$\cdot 10^{-3}$ & 2.4544$\cdot 10^{-3}$ \\
8 & 3.1285$\cdot 10^{-3}$ & 3.1323$\cdot 10^{-3}$ & 2.4558$\cdot 10^{-3}$ & 2.4543$\cdot 10^{-3}$ & 2.4544$\cdot 10^{-3}$ \\
16 & 2.4549$\cdot 10^{-3}$ & 2.4549$\cdot 10^{-3}$ & 2.4543$\cdot 10^{-3}$ & 2.4548$\cdot 10^{-3}$ & 2.4542$\cdot 10^{-3}$ \\
24 & 2.4542$\cdot 10^{-3}$ & 2.4542$\cdot 10^{-3}$ & 2.4543$\cdot 10^{-3}$ & 2.4542$\cdot 10^{-3}$ & 2.4543$\cdot 10^{-3}$ \\
\hline
\end{tabular}

\vspace*{0.4cm}

\begin{tabular}{|c|c|c|c|c|c|}
    \cline{2-6}
    \multicolumn{1}{c|}{} & \multicolumn{5}{c|}{$\text{err}_p$}\\
    \hline
    $m_u\setminus m_p$ & 0 & 4 & 8 & 16 & 24 \\
    \hline
0 & -- & 1.4431$\cdot 10^{-0}$ & 1.4527$\cdot 10^{-3}$ & 2.4817$\cdot 10^{-4}$ & 1.9117$\cdot 10^{-4}$ \\
4 & 1.8012$\cdot 10^{-0}$ & 2.8270$\cdot 10^{-1}$ & 1.3397$\cdot 10^{-3}$ & 3.1139$\cdot 10^{-4}$ & 1.9102$\cdot 10^{-4}$ \\
8 & 1.7789$\cdot 10^{-0}$ & 2.2452$\cdot 10^{-3}$ & 1.9174$\cdot 10^{-4}$ & 1.9099$\cdot 10^{-4}$ & 1.9102$\cdot 10^{-4}$ \\
16 & 1.7789$\cdot 10^{-0}$ & 2.0284$\cdot 10^{-4}$ & 1.9240$\cdot 10^{-4}$ & 2.0055$\cdot 10^{-4}$ & 1.9104$\cdot 10^{-4}$ \\
24 & 1.7789$\cdot 10^{-0}$ & 1.9104$\cdot 10^{-4}$ & 1.9252$\cdot 10^{-4}$ & 1.9101$\cdot 10^{-4}$ & 1.9108$\cdot 10^{-4}$ \\
\hline
\end{tabular}
\caption{Velocity recovery error $\text{err}_u$ and pressure recovery error $\text{err}_p$ for the domain with a hole using $n=4$ refinements for the mesh. In this setting, we have $\|\vu\|_{H^1(\Omega)}=3.127$, $\|p\|_{L^2(\Omega)}=0.941$ and $\mean(p)\not=0$.}
    \label{tab:case2_hole_n4}
\end{table}

\begin{table}[htbp]
    \centering
    \begin{tabular}{|c|c|c|c|c|c|}
    \cline{2-6}
    \multicolumn{1}{c|}{} & \multicolumn{5}{c|}{$\text{err}_u$}\\
    \hline
$m_u\setminus m_p$ & 0 & 4 & 8 & 16 & 24 \\
     \hline  
0 & -- & 1.1092$\cdot 10^{-0}$ & 3.5309$\cdot 10^{-3}$ & 3.5522$\cdot 10^{-3}$ & 8.6152$\cdot 10^{-4}$ \\
4 & 2.5503$\cdot 10^{-1}$ & 2.5503$\cdot 10^{-1}$ & 1.4741$\cdot 10^{-3}$ & 8.7577$\cdot 10^{-4}$ & 8.6019$\cdot 10^{-4}$ \\
8 & 2.1678$\cdot 10^{-3}$ & 2.1733$\cdot 10^{-3}$ & 8.6355$\cdot 10^{-4}$ & 8.6019$\cdot 10^{-4}$ & 8.6020$\cdot 10^{-4}$ \\
16 & 8.6017$\cdot 10^{-4}$ & 8.6029$\cdot 10^{-4}$ & 8.6040$\cdot 10^{-4}$ & 8.6020$\cdot 10^{-4}$ & 8.6017$\cdot 10^{-4}$ \\
24 & 8.6018$\cdot 10^{-4}$ & 8.6017$\cdot 10^{-4}$ & 8.6042$\cdot 10^{-4}$ & 8.6018$\cdot 10^{-4}$ & 8.6018$\cdot 10^{-4}$ \\
\hline
\end{tabular}

\vspace*{0.4cm}

\begin{tabular}{|c|c|c|c|c|c|}
    \cline{2-6}
    \multicolumn{1}{c|}{} & \multicolumn{5}{c|}{$\text{err}_p$}\\
    \hline
$m_u\setminus m_p$ & 0 & 4 & 8 & 16 & 24 \\
     \hline   
0 & -- & 1.4433$\cdot 10^{-0}$ & 1.4968$\cdot 10^{-3}$ & 1.1313$\cdot 10^{-4}$ & 4.1624$\cdot 10^{-5}$ \\
4 & 1.8012$\cdot 10^{-0}$ & 2.8281$\cdot 10^{-1}$ & 1.3824$\cdot 10^{-3}$ & 1.9465$\cdot 10^{-4}$ & 4.1802$\cdot 10^{-5}$ \\
8 & 1.7788$\cdot 10^{-0}$ & 2.2949$\cdot 10^{-3}$ & 5.0334$\cdot 10^{-5}$ & 4.1712$\cdot 10^{-5}$ & 4.1761$\cdot 10^{-5}$ \\
16 & 1.7788$\cdot 10^{-0}$ & 4.4890$\cdot 10^{-5}$ & 4.7888$\cdot 10^{-5}$ & 4.2479$\cdot 10^{-5}$ & 4.1787$\cdot 10^{-5}$ \\
24 & 1.7788$\cdot 10^{-0}$ & 4.1778$\cdot 10^{-5}$ & 4.8227$\cdot 10^{-5}$ & 4.1846$\cdot 10^{-5}$ & 4.1977$\cdot 10^{-5}$ \\
\hline
\end{tabular}
\caption{Velocity recovery error $\text{err}_u$ (top) and pressure recovery error $\text{err}_p$ (bottom) for the domain with a hole, for which $\|\vu\|_{H^1(\Omega)}=3.1269$ and $\|p\|_{L^2(\Omega)}=0.9414$ with $p$ not mean free, using $n=5$ refinements for the mesh.}
    \label{tab:case2_hole_n5}
\end{table}

\begin{figure}[htbp]
    \centering
    \includegraphics[width=4.8cm]{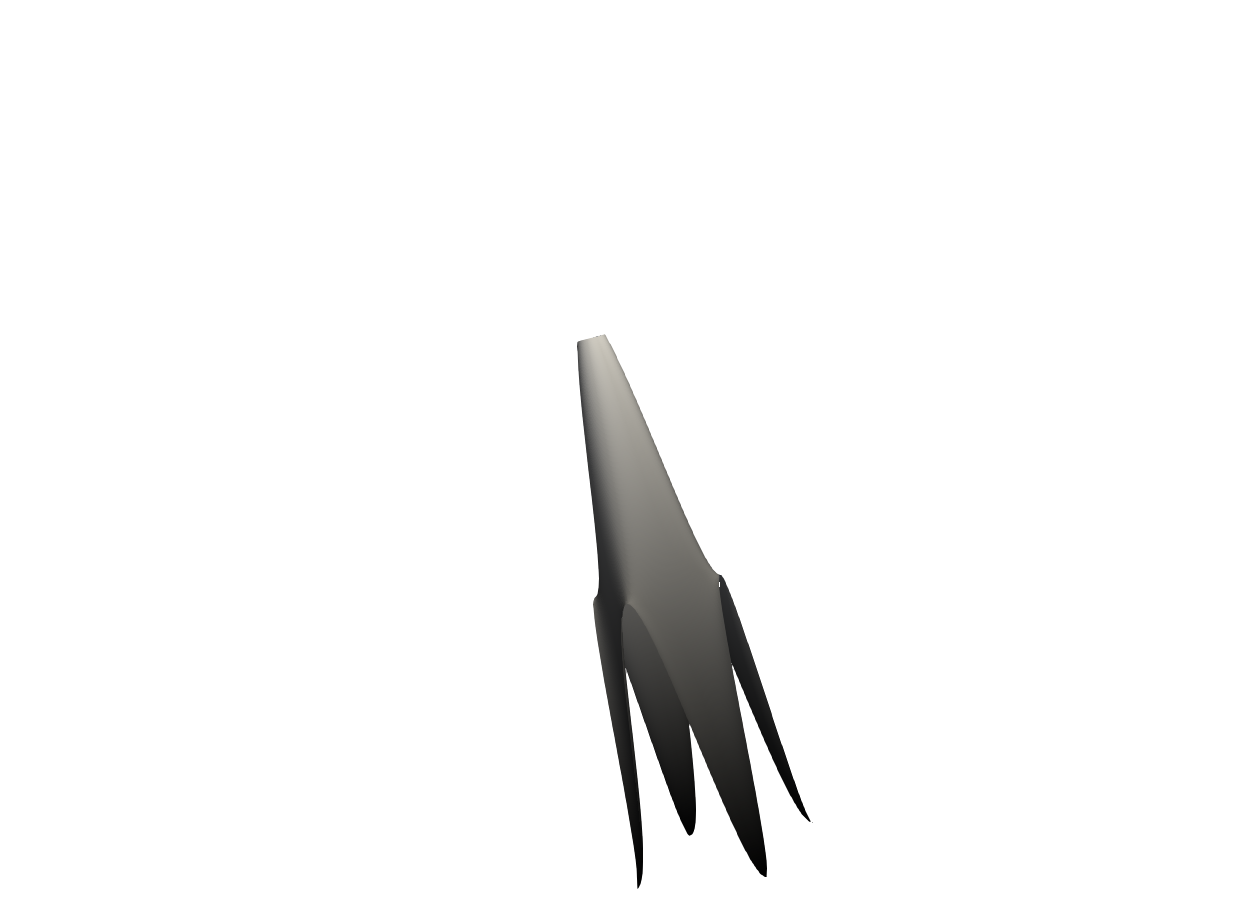}    
    \hspace{-2cm}\includegraphics[width=4.8cm]{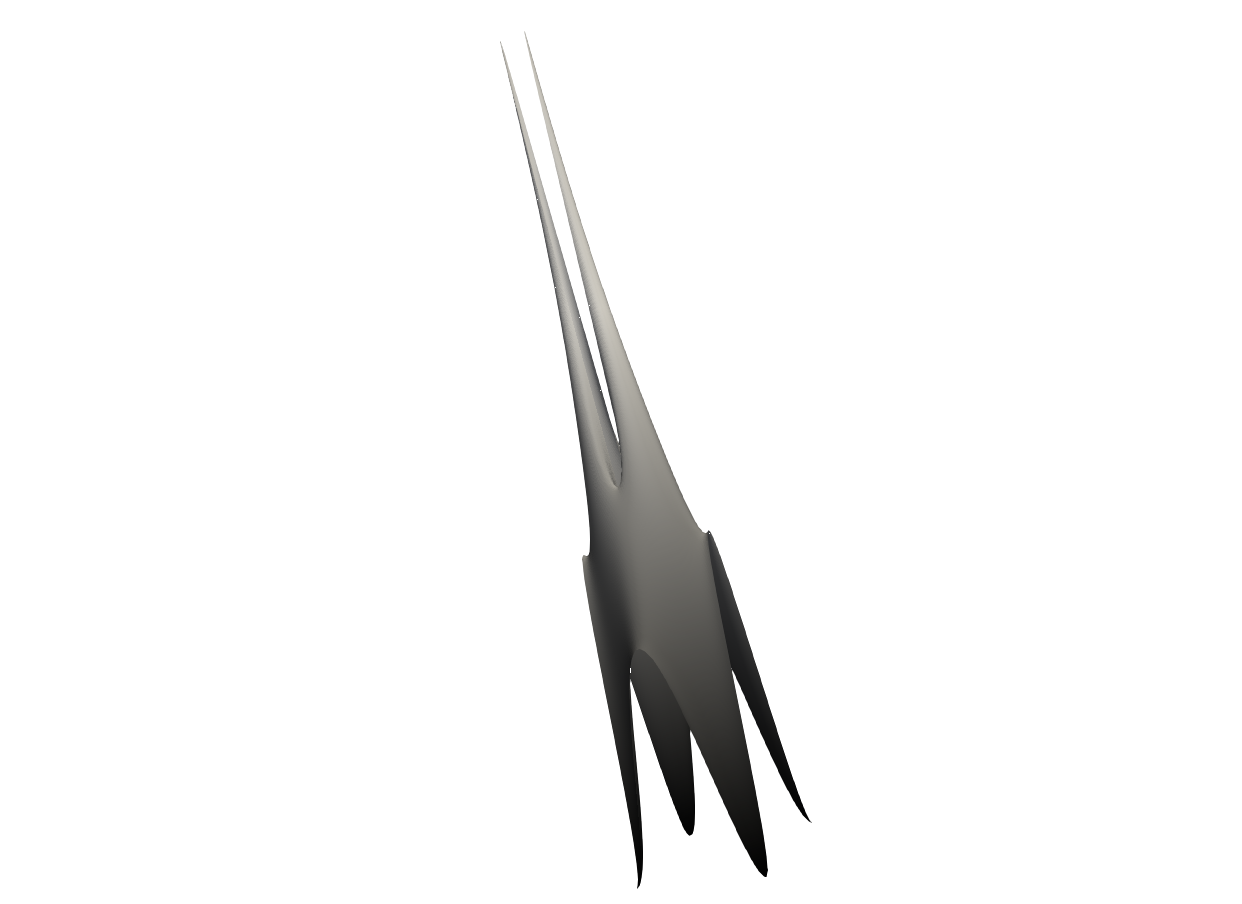}
    \hspace{-2cm}\includegraphics[width=4.8cm]{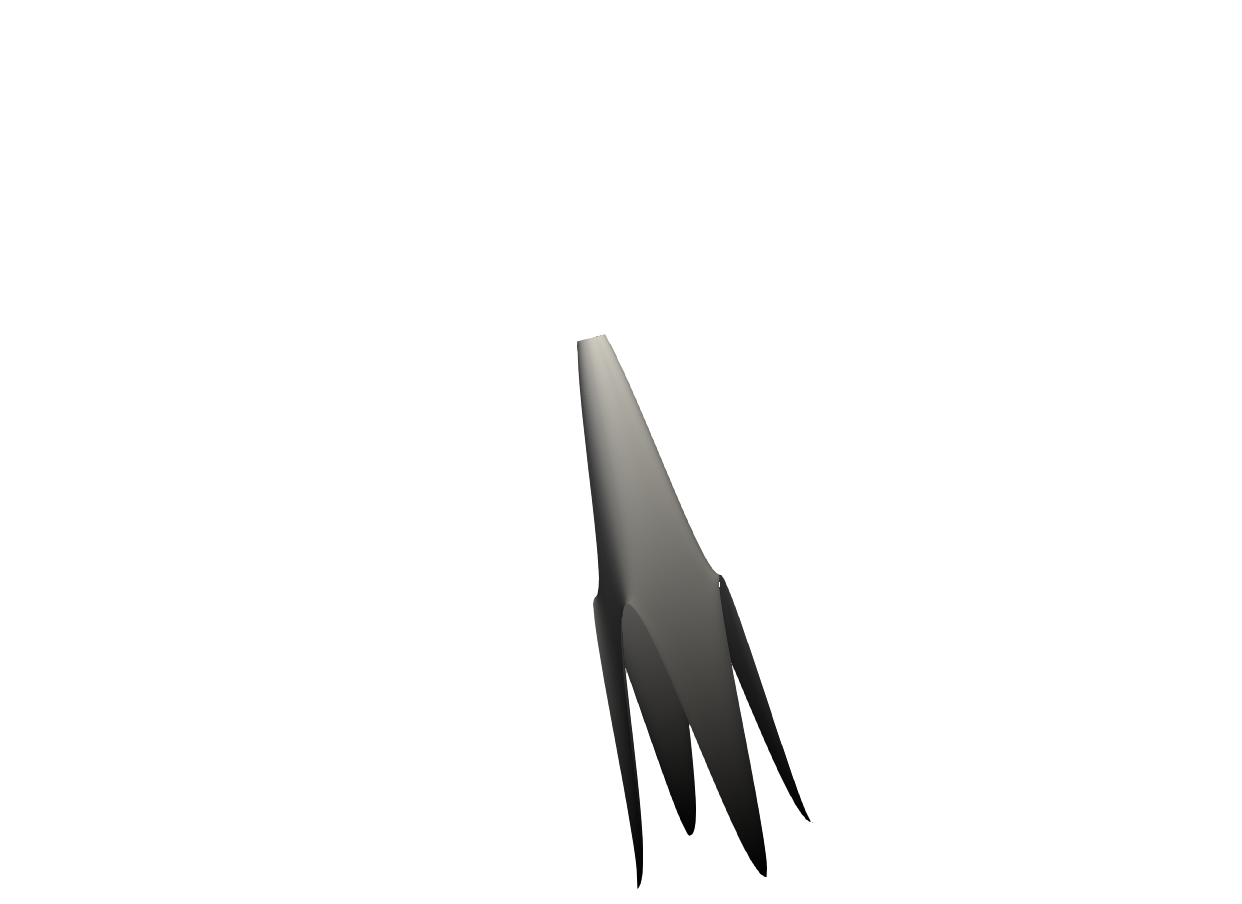}

  \caption{Recovery of the pressure for the domain with a hole using $n=5$ refinements for the mesh: exact solution (left), recovery using $m_u=m_p=4$ (middle) and $m_u=m_p=16$ (right). For the color map, black corresponds to $\min_{\vx\in\Omega}p(\vx)=-2$ and white to $\max_{\vx\in\Omega}p(\vx)=1.810$. Note that when $m_u=m_p=4$, $\widehat p\in[-2.00,3.96]$ while $\widehat p \in[-2.00,1.81]$ when $m_u=m_p=16$.}
    \label{fig:case2_hole_pressure}
\end{figure}

\subsection{Airfoil} \label{sec:airfoil}

We consider in this subsection the flow around an airfoil, and we are interested in the recovery of specific quantities of interest, namely the drag and lift coefficients. The geometry of the domain is described in Figure~\ref{fig:airfoil_mesh}-left. We assume that the boundary conditions are unknown around the airfoil and at the outflow, that $\vu=(1,0)^T$ for the remaining part of the boundary (the inflow, the top and the bottom), and that the forcing term is $\vf=\mathbf{0}$. Denoting by $\Gamma_A$ the boundary of the airfoil and following \cite[Equation (2.2)]{TT2000}, we compute the drag and lift coefficients using
\begin{equation} \label{def:cDcL}
    (c_D,c_L)^T = -\int_{\Gamma_A}\left( 2\veps(\vu)\vn-p\vn\right).
\end{equation}
Note that the drag and lift coefficients could be alternatively computed using a variational formulation, see \cite[Equation (2.9)]{TT2000}. The two formulations are equivalent at the continuous level, but the variational based formulation is more accurate after discretization, see for instance \cite{GLLS1997}. However, in our setting, the two approaches give comparable results and we only report those obtained using \eqref{def:cDcL}.

To compute the error in velocity, pressure, drag and lift coefficients, we use a numerically computed reference solution. The latter is obtained considering the same known boundary conditions as above, complemented by $\vu=\mathbf{0}$ on $\Gamma_A$ and $2\veps(\vu)\vn-p\vn=\mathbf{0}$ on the outflow. Note that the pressure is not mean-free in this case. Moreover, we use the same mesh for the reference and recovery solution, see Figure~\ref{fig:airfoil_mesh}-right, which consists of 15360 quadrilaterals (124032 degrees of freedom for the velocity and 15648 for the pressure).

For the measurements, we employ the Gaussian functionals \eqref{def:lambda_z} with the location of the centers, indicated with dots on Figure~\ref{fig:airfoil_mesh}-left, given by $\vz_1=(-0.3,0)$, $\vz_2=(2,0)$, $\vz_3=(0.5,0.15)$, $\vz_4=(0.5,-0.1)$, $\vz_{5,6}=(0.7,\pm 0.1)$, $\vz_{7,8}=(0.1,\pm 0.1)$, $\vz_9=(0.3,0.15)$, $\vz_{10}=(0.3,-0.1)$, and $\vz_{11,12}=(2,\pm 0.6)$.
The effect on the number of measurements is tested using $m=2$, $m=6$ and $m=12$ measurements of (each component of) the velocity and of the pressure using $\{\vz_i\}_{i=1}^m$ for the centers. 

\begin{figure}[htbp]
       \centering
\scalebox{0.9}{
\begin{tikzpicture}
\node[inner sep=0pt] (geometry) at (1.5,0)
    {\includegraphics[width=6.5cm]{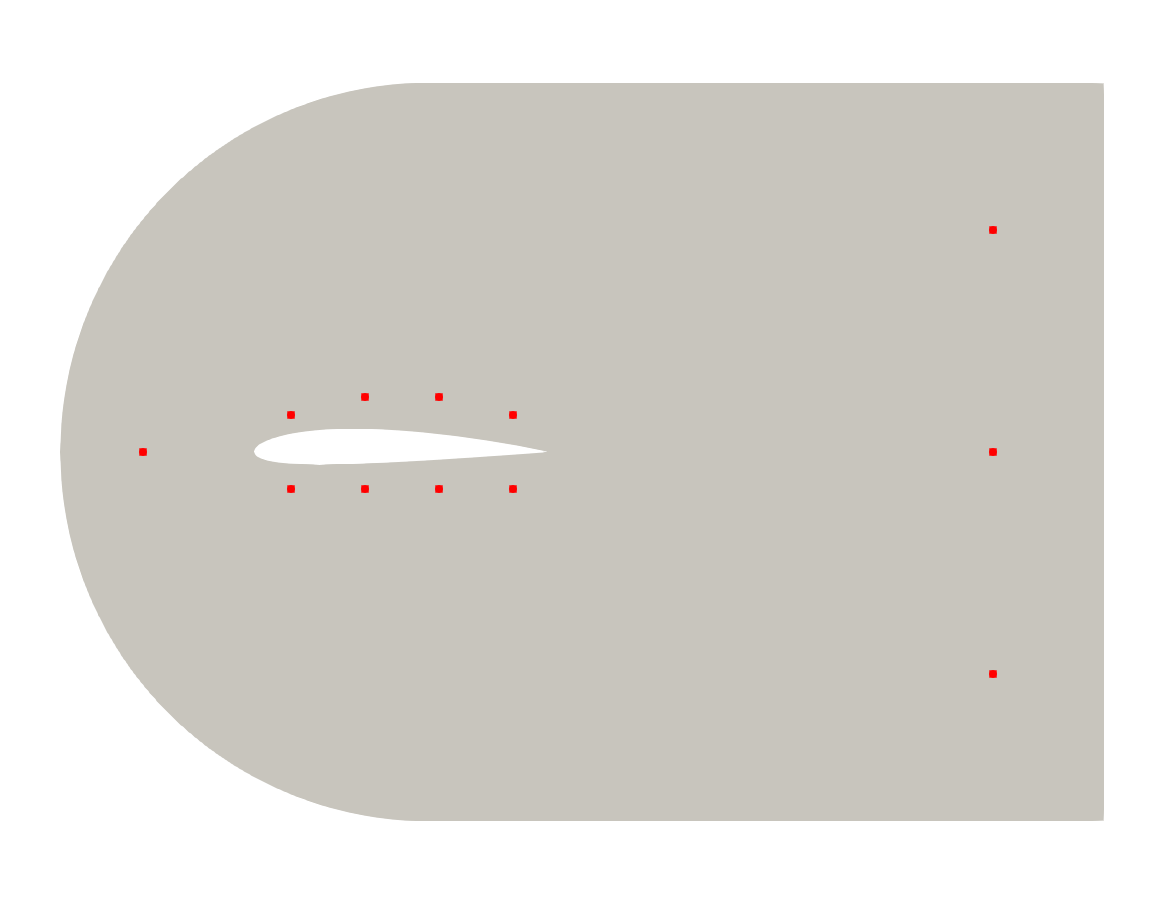}}; % AirfoilSolid.png

\filldraw (4.4,2.04) circle (1pt);
\draw (4.4,2.04) node[right]{{\small $(2.3,1)$}};

\filldraw (4.4,-2.04) circle (1pt);
\draw (4.4,-2.04) node[right]{{\small $(2.3,-1)$}};

\filldraw (-0.36,0) circle (1pt);
\draw (-0.36,0) node[left]{{\small $\mathbf{0}$}};

\draw (4.4,0) node[right]{{\small outflow}};

\end{tikzpicture} \hspace{0.2cm}
\includegraphics[width=6.5cm]{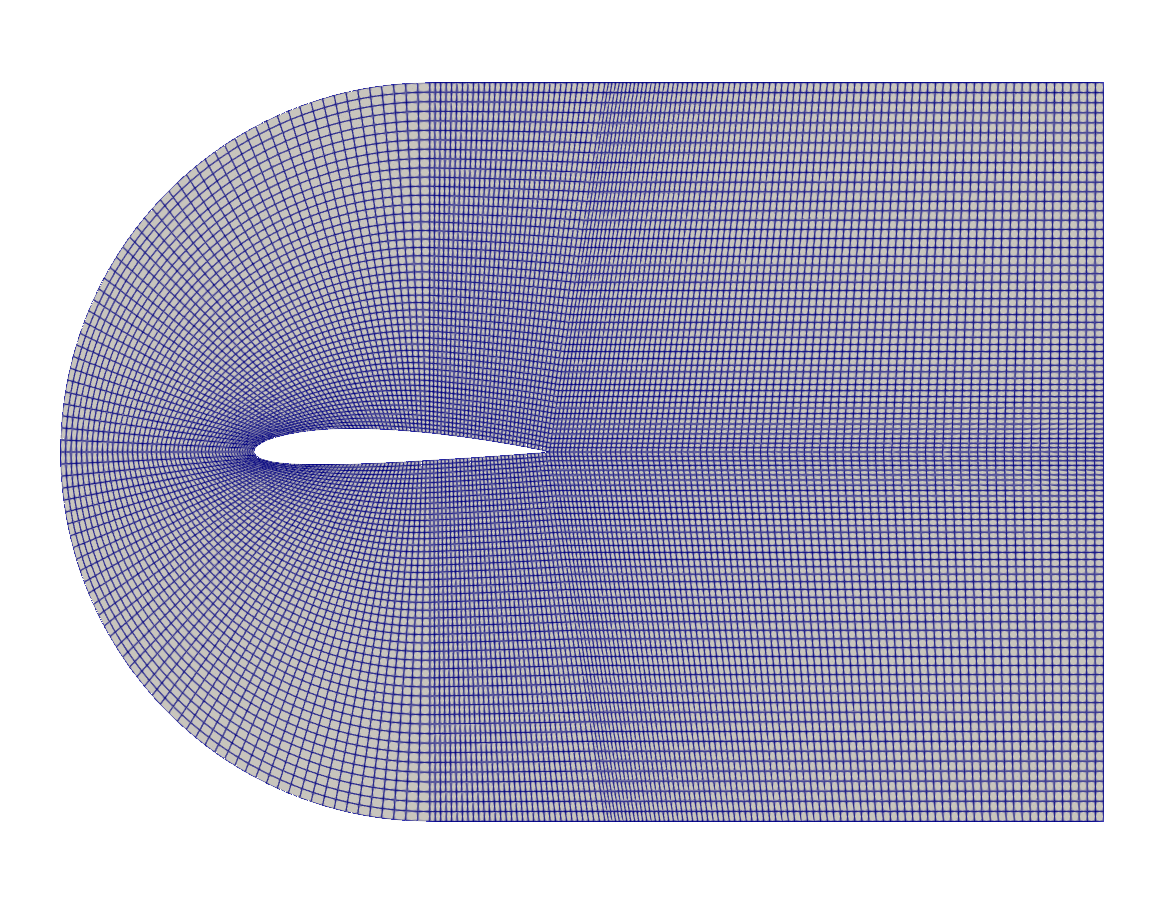}
        }
        \caption{Geometry for the airfoil test case with the location of the measurements (left) and the mesh (right).}
        \label{fig:airfoil_mesh}
        
\end{figure}

The velocity and pressure recovery errors $\text{err}_u$ and $\text{err}_p$ are reported in Table~\ref{tab:airfoil_errors}, while Table~\ref{tab:airfoil_QoI} contains the recovery errors in the drag and lift coefficients $c_D$ and $c_L$. Overall, we observe that adding measurements improves the recovery of the pressure and the velocity as well as the specific quantities of interest $c_D$ and $c_L$. 

\begin{table}[htbp]
\centering
\begin{tabular}{|c|c|c|c|c||c|c|c|c|}
    \cline{2-9}
\multicolumn{1}{c}{}    & \multicolumn{4}{|c||}{$\text{err}_u$} &\multicolumn{4}{|c|}{$\text{err}_p$}\\
    \hline
$m_u\setminus m_p$ & 0 & 2 & 6 & 12 & 0 & 2 & 6 & 12 \\
     \hline       
0 & -- & 1.1650 & 0.8489 & 0.1267 & -- & 3.0705 & 1.8428 & 0.1663 \\
2 & 0.5355 & 0.6736 & 0.6090 & 0.1217 &5.7651 & 1.2652 & 1.1309 & 0.1620\\
6 & 0.0696 & 0.3732 & 0.1628 & 0.0976& 5.6704 & 0.6024 & 0.2425 & 0.1543 \\
12 & 0.0042 & 0.0816 & 0.0726 & 0.0215& 5.6691 & 0.1177 & 0.1055 & 0.0301\\
\hline
\end{tabular}

\caption{Velocity recovery error $\text{err}_u$ and pressure recovery error $\text{err}_p$ for the airfoil test case.}
    \label{tab:airfoil_errors}
\end{table}

\begin{table}[htbp]
\centering
\begin{tabular}{|c|c|c|c|c||c|c|c|c|}
    \cline{2-9}
 \multicolumn{1}{c}{} &  \multicolumn{4}{|c||}{error in the drag coefficient $c_D$} & \multicolumn{4}{|c|}{error in the lift coefficient $c_L$}\\
    \hline
$m_u\setminus m_p$ & 0 & 2 & 6 & 12 & 0 & 2 & 6 & 12 \\
     \hline  
0 & -- & 0.5698 & 0.4462 & 0.0378 & -- & 4.7070$\cdot 10^{-1}$ & 1.0058$\cdot 10^{-1}$ & 3.3610$\cdot 10^{-3}$\\
2 & 1.3036 & 0.3659 & 0.3019 & 0.0389 & 5.8686$\cdot 10^{-2}$ & 5.5617$\cdot 10^{-2}$ & 3.1370$\cdot 10^{-2}$ & 1.3332$\cdot 10^{-3}$\\
6 & 0.1066 & 0.2584 & 0.0453 & 0.0045 & 1.5672$\cdot 10^{-2}$ & 4.4071$\cdot 10^{-2}$ & 8.2003$\cdot 10^{-3}$ & 8.4000$\cdot 10^{-5}$ \\
12 & 0.0004 & 0.0031 & 0.0009 & 0.0004&  7.9000$\cdot 10^{-6}$ & 6.2620$\cdot 10^{-4}$ & 2.0500$\cdot 10^{-4}$ & 1.2530$\cdot 10^{-4}$ \\    
\hline
\end{tabular}
\caption{Recovery error for the drag and lift coefficients $c_D$ and $c_L$. Note that for the reference solution, $c_D=12.4042$ and $c_L=-0.0738222$.}
    \label{tab:airfoil_QoI}
\end{table}

\subsection{Ill-conditioning of the Gram matrix} \label{sec:ill_cond}

As already mentioned above, the condition number of the Gram matrix $\widehat G$ deteriorates as the number of measurements increases.
The impact of this poor conditioning on the recovery error when solving \eqref{eqn:LS_Gram} is studied numerically in this section.
We study the effect of replacing $\widehat G$ by $\widehat G^\epsilon$ obtained by setting to zero the singular values of $\widehat G$ smaller than $\epsilon\cdot\sigma_{\max}$, where $\sigma_{\max}$ is the largest singular value of $\widehat G$.

\subsubsection{Effect of thresholding on the Schur complement tolerance} \label{sec:tol_Schur}

We start by examining the effect of the stopping criterion used for the conjugate gradient algorithm used to solve   \eqref{e:algo_step1} and \eqref{e:algo_step3} as well as compute the finite element approximation of $(\vu_{\vf},p_{\vf})\in H_0^1(\Omega)^2\times L_0^2(\Omega)$ solving \eqref{def:pb1}. We let $s_{tol}^1$ be the stopping criteria for \eqref{def:pb1} and $s_{tol}^2$ the one for \eqref{e:algo_step1} and \eqref{e:algo_step3}.
% and the discrete Stokes finite element discretization of 
We report in Table~\ref{tab:influence_schur_tol} the recovery errors obtained when using $\epsilon=0$ (no thresholding) and $\epsilon=10^{-10}$ for different values of $s_{tol}$. We observe that it is counter-productive to use a small tolerance of the conjugate gradients without thresholding. 
In contrast, when $\epsilon=10^{-10}$, the recovery error for both the velocity and the pressure are robust with respect to the tolerance parameters.

\begin{table}[htbp]
    \centering
    \begin{tabular}{|c|l|c|c||c|c|}
    \cline{3-6}
    \multicolumn{2}{c|}{ } & \multicolumn{2}{|c||}{$\epsilon=0$} & \multicolumn{2}{|c|}{$\epsilon=10^{-10}$} \\
    \hline
    $s_{tol}^1$ & \multicolumn{1}{|c|}{$s_{tol}^2$} & $\text{err}_u$ & $\text{err}_p$ & $\text{err}_u$ & $\text{err}_p$ \\
    \hline
$10^{-6}$ & $10^{-12}$ & 1.09432 & 0.241753 & 1.07426 & 0.125310 \\
$10^{-6}$ & $10^{-9}$ & 1.07428 & 0.125512 & 1.07426 & 0.125309 \\ $10^{-9}$ & $10^{-12}$ & 1.07086 & 0.113884 & 1.07426 & 0.125309 \\
$10^{-9}$ & $10^{-9}$ & 1.07426 & 0.125350 & 1.07426 & 0.125308 \\
\hline
    \end{tabular}   
    \caption{Velocity recovery error $\text{err}_u$ and pressure recovery error $\text{err}_p$ for the case \eqref{num:case2} using $m_u=0$, $m_p=64$ and $n=6$ refinements for the mesh. The tolerances $s_{tol}^1$ and $s_{tol}^2$ are used as stopping criteria for the conjugate gradient algorithm used for the resolution of \eqref{def:pb1} and \eqref{e:algo_step1}-\eqref{e:algo_step3}.}
    \label{tab:influence_schur_tol}
\end{table}

\subsubsection{Effect of thresholding and preconditioning} \label{sec:eps_SVD}

We explore numerically two preconditioning strategies for the resolution of system \eqref{eqn:LS_Gram}. We follow \cite[Section 11.5]{GL2013} and rewrite \eqref{eqn:LS_Gram} as
\begin{equation} \label{eqn:LS_Gram_precond}
    \widehat P^{-\frac{1}{2}}\widehat G \widehat P^{-\frac{1}{2}}\widetilde{\valpha} = \widehat P^{-\frac{1}{2}}\vmeas, \qquad \widehat \valpha = \widehat P^{-\frac{1}{2}}\widetilde\valpha,
\end{equation}
where $\widehat P:=\text{diag}(\widehat G)$ and $\widehat P^{-\frac{1}{2}}$ is the principal square root of $\widehat P^{-1}$.
To solve the preconditioned linear system \eqref{eqn:LS_Gram_precond}, we use the Moore--Penrose inverse obtained via a singular value decomposition of $\widehat G_{\widehat P}:=\widehat P^{-\frac{1}{2}}\widehat G \widehat P^{-\frac{1}{2}}$ with thresholding parameter $\epsilon=10^{-10}$.
This means that all singular values of magnitude smaller than $\epsilon \cdot \sigma_{\max}$ are set to zero, where $\sigma_{\max}$ is the largest singular value of $\widehat P^{-\frac{1}{2}}\widehat G\widehat P^{-\frac{1}{2}}$. The resulting  matrix is denoted $\widehat G_{\widehat P}^\epsilon$. For comparison, observe that the original system without preconditioning corresponds to $\widehat G_I$, where $I$ is the identity matrix, and thus $\widehat G^\epsilon:=\widehat G_I^\epsilon$ will refer to the original matrix with thresholding.  

We start with a numerical study of the influence of the thresholding parameter $\epsilon$ as well as the effect of preconditioner $\widehat P$. Tables~\ref{tab:condG_n2} and \ref{tab:condG_n6} report the recovery errors obtained using $\widehat G$, $\widehat G_{\widehat P}$, or $\widehat G_{\widehat P}^\epsilon$ for two different subdivisions of $\Omega$. The condition number of the various matrices is also provided. We observe that the condition number of the matrices increases as the number of measurements increases. Moreover, there is not a clear difference between the condition number of $\widehat G$ and its preconditioned version $G_P$. However, the condition number of $\widehat G_{\widehat P}^{\epsilon}$ is significantly smaller. In terms of recovery errors, we observe that for a small number of measurements, namely $m_u,m_p\in\{0,4\}$, the recovery error is similar regardless of the matrix system used or the mesh refinement level. In the case $n=2$, the effects of poor conditioning become more pronounced as additional measurements are incorporated, resulting in large recovery errors that are partially alleviated through thresholding. Moreover, for a smaller mesh size ($n=6$), all three linear systems produce similar recovery errors for moderate number of measurements, while the performance is marginally  better without thresholding ($\epsilon=0$) for larger values of $m_u$ and $m_p$ because the error due to the thresholding becomes relevant. 

\begin{table}[htbp]
\centering
\begin{tabular}{|c|c|c|c|c|c|c|c|c|c|c|}
    \cline{3-11}
\multicolumn{2}{c|}{ } & \multicolumn{3}{|c|}{$\widehat G$} & \multicolumn{3}{|c|}{$\widehat G_{\widehat P}$} & \multicolumn{3}{|c|}{$\widehat G_{\widehat P}^{\epsilon}$} \\
    \hline
$m_u$ & $m_p$ & $\text{err}_u$ & $\text{err}_p$ & cond & $\text{err}_u$ & $\text{err}_p$ & cond & $\text{err}_u$ & $\text{err}_p$ & cond \\
\hline
4 & 0 & 1.108 & 0.724 & 1.02$\cdot 10^{03}$ & 1.108 & 0.724 & 1.02$\cdot 10^{03}$ & 1.108 & 0.724 & 1.02$\cdot 10^{03}$ \\
16 & 0 & 0.928 & 0.648 & 2.57$\cdot 10^{08}$ & 0.928 & 0.648 & 2.66$\cdot 10^{08}$ & 0.928 & 0.648 & 2.66$\cdot 10^{08}$ \\
64 & 0 & 0.524 & 0.257 & 1.65$\cdot 10^{16}$ & 0.570 & 0.281 & 1.33$\cdot 10^{16}$ & 0.805 & 0.516 & 7.48$\cdot 10^{09}$ \\
\hline
0 & 4 & 3.200 & 2.075 & 2.54$\cdot 10^{00}$ & 3.200 & 2.075 & 2.54$\cdot 10^{00}$ & 3.200 & 2.075 & 2.54$\cdot 10^{00}$ \\
4 & 4 & 1.108 & 0.725 & 3.98$\cdot 10^{03}$ & 1.108 & 0.725 & 1.19$\cdot 10^{03}$ & 1.108 & 0.725 & 1.19$\cdot 10^{03}$ \\
16 & 4 & 0.928 & 0.650 & 3.14$\cdot 10^{08}$ & 0.928 & 0.650 & 2.70$\cdot 10^{08}$ & 0.928 & 0.650 & 2.70$\cdot 10^{08}$ \\
64 & 4 & 1.354 & 0.327 & 2.28$\cdot 10^{16}$ & 1.310 & 0.302 & 3.29$\cdot 10^{16}$ & 0.805 & 0.528 & 7.50$\cdot 10^{09}$ \\
\hline
0 & 16 & 1.925 & 2.393 & 8.32$\cdot 10^{04}$ & 1.925 & 2.393 & 3.95$\cdot 10^{04}$ & 1.925 & 2.393 & 3.95$\cdot 10^{04}$ \\
4 & 16 & 1.446 & 2.332 & 9.64$\cdot 10^{04}$ & 1.446 & 2.332 & 3.95$\cdot 10^{04}$ & 1.446 & 2.332 & 3.95$\cdot 10^{04}$ \\
16 & 16 & 2.216 & 1.510 & 1.24$\cdot 10^{10}$ & 2.216 & 1.510 & 2.60$\cdot 10^{09}$ & 2.216 & 1.510 & 2.60$\cdot 10^{09}$ \\
64 & 16 & 6.478 & 0.165 & 2.03$\cdot 10^{16}$ & 5.794 & 0.206 & 3.41$\cdot 10^{16}$ & 6.679 & 0.231 & 3.26$\cdot 10^{09}$ \\
\hline
0 & 64 & 9668 & 0.138 & 1.16$\cdot 10^{16}$ & 3138 & 0.152 & 2.17$\cdot 10^{16}$ & 5.045 & 0.152 & 1.47$\cdot 10^{07}$ \\
4 & 64 & 2661 & 0.136 & 1.03$\cdot 10^{16}$ & 8698 & 0.146 & 1.22$\cdot 10^{16}$ & 5.358 & 0.152 & 2.05$\cdot 10^{07}$ \\
16 & 64 & 196132 & 0.138 & 8.43$\cdot 10^{16}$ & 204076 & 0.142 & 1.40$\cdot 10^{16}$ & 10.749 & 0.152 & 9.85$\cdot 10^{09}$ \\
64 & 64 & 29.182 & 0.134 & 3.75$\cdot 10^{16}$ & 7.613 & 0.134 & 5.08$\cdot 10^{16}$ & 2.529 & 0.145 & 3.27$\cdot 10^{09}$ \\
\hline
\end{tabular}
\caption{Condition number and recovery errors for the case \eqref{num:case2} on the square domain $\Omega=(0,1)^2$ using $n=2$ refinements for the mesh, when considering the Gram matrix $\widehat G$ and its preconditioned versions $\widehat G_{\widehat P}$, or $\widehat G_{\widehat P}^{\epsilon}$ (with truncation $\epsilon=10^{-10}$).}
    \label{tab:condG_n2}
\end{table}

\begin{table}[htbp]
\centering
\begin{tabular}{|c|c|c|c|c|c|c|c|c|c|c|}
    \cline{3-11}
\multicolumn{2}{c|}{ } & \multicolumn{3}{|c|}{$\widehat G$} & \multicolumn{3}{|c|}{$\widehat G_{\widehat P}$} & \multicolumn{3}{|c|}{$\widehat G_{\widehat P}^{\epsilon}$} \\
    \hline
$m_u$ & $m_p$ & $\text{err}_u$ & $\text{err}_p$ & cond & $\text{err}_u$ & $\text{err}_p$ & cond & $\text{err}_u$ & $\text{err}_p$ & cond \\
\hline
4 & 0 & 1.101 & 0.709 & 9.08$\cdot 10^{02}$ & 1.101 & 0.709 & 9.08$\cdot 10^{02}$ & 1.101 & 0.709 & 9.08$\cdot 10^{02}$ \\
16 & 0 & 0.681 & 0.540 & 1.59$\cdot 10^{07}$ & 0.681 & 0.540 & 1.64$\cdot 10^{07}$ & 0.681 & 0.540 & 1.64$\cdot 10^{07}$ \\
64 & 0 & 0.163 & 0.136 & 2.64$\cdot 10^{16}$ & 0.163 & 0.137 & 2.84$\cdot 10^{16}$ & 0.241 & 0.187 & 8.44$\cdot 10^{09}$ \\
\hline
0 & 4 & 3.200 & 2.075 & 2.78$\cdot 10^{00}$ & 3.200 & 2.075 & 2.78$\cdot 10^{00}$ & 3.200 & 2.075 & 2.78$\cdot 10^{00}$ \\
4 & 4 & 1.101 & 0.709 & 3.66$\cdot 10^{03}$ & 1.101 & 0.709 & 1.06$\cdot 10^{03}$ & 1.101 & 0.709 & 1.06$\cdot 10^{03}$ \\
16 & 4 & 0.681 & 0.540 & 4.33$\cdot 10^{08}$ & 0.681 & 0.540 & 3.80$\cdot 10^{08}$ & 0.681 & 0.540 & 3.80$\cdot 10^{08}$ \\
64 & 4 & 0.170 & 0.150 & 1.79$\cdot 10^{16}$ & 0.172 & 0.153 & 2.28$\cdot 10^{16}$ & 0.241 & 0.187 & 9.85$\cdot 10^{09}$ \\
\hline
0 & 16 & 1.536 & 0.792 & 4.17$\cdot 10^{05}$ & 1.536 & 0.792 & 1.85$\cdot 10^{05}$ & 1.536 & 0.792 & 1.85$\cdot 10^{05}$ \\
4 & 16 & 0.807 & 0.582 & 4.22$\cdot 10^{05}$ & 0.807 & 0.582 & 1.86$\cdot 10^{05}$ & 0.807 & 0.582 & 1.86$\cdot 10^{05}$ \\
16 & 16 & 0.589 & 0.451 & 9.56$\cdot 10^{09}$ & 0.589 & 0.451 & 2.21$\cdot 10^{09}$ & 0.589 & 0.451 & 2.21$\cdot 10^{09}$ \\
64 & 16 & 0.143 & 0.113 & 2.96$\cdot 10^{16}$ & 0.142 & 0.111 & 5.36$\cdot 10^{16}$ & 0.226 & 0.171 & 8.93$\cdot 10^{09}$ \\
\hline
0 & 64 & 1.074 & 0.125 & 1.43$\cdot 10^{15}$ & 1.074 & 0.125 & 4.03$\cdot 10^{14}$ & 1.074 & 0.125 & 1.16$\cdot 10^{09}$ \\
4 & 64 & 0.597 & 0.106 & 1.44$\cdot 10^{15}$ & 0.597 & 0.106 & 4.02$\cdot 10^{14}$ & 0.597 & 0.106 & 1.16$\cdot 10^{09}$ \\
16 & 64 & 0.252 & 0.152 & 1.44$\cdot 10^{15}$ & 0.252 & 0.152 & 4.11$\cdot 10^{14}$ & 0.292 & 0.190 & 5.22$\cdot 10^{09}$ \\
64 & 64 & 0.184 & 0.174 & 7.98$\cdot 10^{16}$ & 0.166 & 0.153 & 4.07$\cdot 10^{16}$ & 0.239 & 0.188 & 6.80$\cdot 10^{09}$ \\
\hline
\end{tabular}
\caption{Condition number and recovery errors for the case \eqref{num:case2} on the square domain $\Omega=(0,1)^2$ using $n=6$ refinements for the mesh, when considering the Gram matrix $\widehat G$ and its preconditioned versions $\widehat G_{\widehat P}$, or $\widehat G_{\widehat P}^{\epsilon}$ (with truncation $\epsilon=10^{-10}$).}
    \label{tab:condG_n6}
\end{table}

We note that the positive effect of the preconditioner $\widehat P$ could appear minimal. However, this depends on the situation. For example, we have observed (not reported for the sake of brevity) more robust recovery errors when the computational domain is a square domain with a hole (described in Section~\ref{sec:square_hole}).
In summary, the best option appears to be $\widehat G_{\widehat P}^\epsilon$, but further studies are warranted.

\subsection{Three-dimensional problem} \label{sec:3D}

We now illustrate the proposed algorithm for the recovery of functions defined in $\Omega=(0,1)^3 $.
The velocity and pressure to be recovered are given by
\begin{equation} \label{num:case3}
\vu(\vx) = \left(\begin{array}{c}
   \frac{1}{2}\sin(2\pi x_1)\cos(2\pi x_2)\cos(2\pi x_3) \\[0.5ex]
   \frac{1}{2}\cos(2\pi x_1)\sin(2\pi x_2)\cos(2\pi x_3) \\[0.5ex]
   -\cos(2\pi x_1)\cos(2\pi x_2)\sin(2\pi x_3)
\end{array}\right), \quad p(\vx) = \frac{\cos(2\pi x_1)+\cos(2\pi x_2)}{4}\left(\cos(2\pi x_3)+2\right),
\end{equation}
where $\vx=(x_1,x_2,x_3)\in\Omega$. Note that $\|\vu\|_{H^1(\Omega)}=4.732$ and $\|p\|_{L^2(\Omega)}=0.530$. Moreover, the corresponding force $\vf$ is not equal to zero, and thus the solution $(\vu_{\vf},p_{\vf})\in H^1_0(\Omega)^3\times L^2_0(\Omega)$ of \eqref{def:pb1} in the splitting \eqref{def:splitting} is not identically zero. The pressure is mean-free.

We suppose that the boundary data is unknown on the whole $\Gamma$. We set $s=1$ and use $\epsilon=10^{-10}$ for the thresholding parameter; see Section~\ref{sec:ill_cond}. For the measurements, we consider again the Gaussian functional as in \eqref{def:lambda_z} for $v\in\{u_1,u_2,u_3,p\}$ with centers uniformly distributed in $\Omega$, i.e., for an integer $m={\tilde l}^3$ we use
$$\vz_{(i,j,l)}=\left(\frac{i}{\tilde l+1},\frac{j}{\tilde l+1},\frac{l}{\tilde l+1}\right), \quad i,j,l=1,2,\ldots,\tilde l.$$
We take $m_u$ measurements on each component velocity and $m_p$ measurements of the pressure. The computational domain $\Omega$ is subdivided into $2^{12}$ squares of side-length $2^{-4}$, yielding a uniform mesh with mesh size $h=\sqrt{3}2^{-4}$, 107811 degrees of freedom for the velocity and 4913 for the pressure. We report in Table~\ref{tab:case3_errors} the velocity and pressure recovery error for different values of $m_u$ and $m_p$.

\begin{table}[htbp]
    \centering
    \begin{tabular}{|c|c|c|c|c||c|c|c|c|c|c|c|c|}
    \cline{2-9}
   \multicolumn{1}{c|}{}&  \multicolumn{4}{c||}{$\textrm{err}_u$} &  \multicolumn{4}{c|}{$\textrm{err}_p$} \\
   \hline
$m_u\setminus m_p$ & 0 & 27  & 125  & 343& 0 & 27  & 125  & 343 \\
     \hline 		
0 & -- & 3.2387& 3.2332  & 3.2272 & --  & 0.4696  & 0.3545  & 0.2205\\
8 & 3.4836 & 3.2139  & 3.1990 & 3.1943 & 1.4946 & 0.5350 & 0.3429  & 0.2189 \\
27 & 3.1626 & 2.8387 & 2.0882  & 1.7689 & 2.2798 & 2.4323  & 1.0895  & 0.2208\\
64 & 1.3862  & 1.3622 & 1.1954 & 1.1574   & 2.2798 & 2.4323  & 1.0895  & 0.2208\\
125 & 0.8409 & 0.8386  & 0.8165  & 0.7124 & 0.3869 & 0.4087 & 0.4121 & 0.2420\\
\hline
    \end{tabular}
\caption{Velocity recovery error $\text{err}_u$ and pressure recovery error $\text{err}_p$  for the case three-dimensional case \eqref{num:case3}. We have $\|\vu\|_{H^1(\Omega)}=4.7322$ and $\|p\|_{L^2(\Omega)}=0.5303$.}
\label{tab:case3_errors}
\end{table}

\appendix

\section{Well-posedness of System \eqref{def:saddle}}
\label{sec:WP_saddle}

Let $Y:=H_0^1(\Omega)^d\times L_0^2(\Omega)\times\mathbb{R}$
equipped with the norm $\|(\vz,\tau,\ell)\|_Y:=\sqrt{\| \nabla \vz\|_{L^2(\Omega)}^2+\|\tau\|_{L^2(\Omega)}^2+|\ell|^2}$.
Using the decomposition $L^2(\Omega)=L_0^2(\Omega)\oplus\mathbb{R}$, the saddle point system \eqref{def:saddle} can be rewritten as: find $(\vw,r_0)\in X_0^s$ and $(\vpi,\rho_0,\bar \rho)\in Y$ such that
\begin{equation} \label{def:saddle_v2}
	\left\{\begin{array}{rcll}
		a(\vw,\vv)+\mathcal{B}((\vv,q),(\vpi,\rho_0,\bar \rho)) & = & \lambda(\vv,q), & \forall\, (\vv,q)\in X_0^s, \\[2ex]
        \mathcal{B}((\vw,r_0),(\vz,\tau,\ell)) & = & 0, & \forall\, (\vz,\tau,\ell)\in Y,
	\end{array}\right.
\end{equation}
where for $(\vv,q)\in X_0^s$ and $(\vz,\tau,\ell)\in Y$
$$\mathcal{B}((\vv,q),(\vz,\tau,\ell)) := \mathcal{A}((\vv,q),(\vz,\tau+\ell)) = k(\vv,\vz)+b(\vv,\tau)+b(\vz,q)+b(\vv,\ell).$$
To prove the existence and uniqueness, we show that $\mathcal{B}$ satisfies an inf-sup condition and that $a$ is coercive on the kernel of $\mathcal{B}$.  

Let $(\vv,q)\in X_0^s$ be such that
$$\mathcal{B}((\vv,q),(\vz,\tau,\ell))=0 \quad \forall\,(\vz,\tau,\ell)\in Y.$$
Restricting \eqref{def:saddle} to $(\vz,\tau)\in H_0^1(\Omega)^d\times L_0^2(\Omega)$ shows that $(\vv,q)\in X_0^s$ satisfies $\mathcal S(\vv,q)={\bf 0}$. Moreover, taking $(\vz,\tau) =(\mathbf{0},1)$ in the second equation in \eqref{def:saddle} yields $b(\vv,1)=0$, and thus $\int_{\Bd}\vv\cdot\vn=0$ thanks to the divergence theorem. 
We thus find that the kernel of $\mathcal B$ is contained in
$$
\mathcal K:=\{ (\vv,q)\in H^1(\Omega)^d \times L^2_0(\Omega) \ : \ \int_\Gamma \vv \cdot \vn = 0 \qquad \textrm{and} \qquad \mathcal S(\vv,q)={\bf 0}\}.
$$
In particular, for $(\vv,q)\in \mathcal K$, \eqref{e:a-priori} yields
$$
\|\nabla\vv\|_{L^2(\Omega)}^2+\|q\|_{L^2(\Omega)}^2 \le C^2 \|\vv\|^2_{H^s(\Gamma)}.
$$
In view of the definition \eqref{def:X0s_norm} of the $X_0^s$-norm defined, we directly deduce 
$a(\vv,\vv)=\|\vv\|^2_{H^s(\Gamma)} \ge \frac{1}{C^2+1}\|\vv,q\|_{X_0^s}^2$,
which proves the coercivity of $a$ on $\mathcal K$ and thus on the kernel of $a$.

For the the inf-sup condition, we show that
$$\inf_{(\vz,\tau,\ell)\in Y}\,\sup_{(\vv,q)\in X_0^s}\frac{\mathcal{B}((\vv,q),(\vz,\tau,\ell))}{\|(\vv,q)\|_{X_0^s}\|(\vz,\tau,\ell)\|_{Y}}>0.$$
To do so, for any $(\vz,\tau,\ell)\in Y$, we construct $(\vv,q)\in X_0^s$ such that
\begin{equation} \label{eqn:C1C2}
    \mathcal{B}((\vv,q),(\vz,\tau,\ell))\ge C_1\|(\vz,\tau,\ell)\|_Y^2 \qquad \text{and} \qquad \|(\vv,q)\|_{X_0^s}\le C_2\|(\vz,\tau,\ell)\|_Y
\end{equation}
for some positive constants $C_1$ and $C_2$. 

The bilinear form $b$ satisfies the inf-sup condition (see for instance \cite{GR86}) on $H_0^1(\Omega)^d\times L_0^2(\Omega)$ and in particular, there is a constant $\beta>0$ such that for all $\tau\in L^2_0(\Omega)$, there is $\vv_\tau \in H^1_0(\Omega)$ such that
$$
b(\vv_\tau,\tau) \ge \beta \|\nabla\vv_\tau\|_{L^2(\Omega)} \|\tau\|_{L^2(\Omega)}.
$$
Upon rescaling $\vv_\tau$, without loss of generality, we can assume that $\| \nabla \vv_\tau \|_{L^2(\Omega)} = \eta \| \tau \|_{L^2(\Omega)}$ for some $\eta>0$ to be determined. Hence, we have
$$
b(\vv_\tau,\tau) \geq \beta  \eta \| \tau \|_{L^2(\Omega)}^2.
$$
The last ingredient is $\vv_\ell(\vx):=-\frac{\ell}{d|\Omega|}\vx$ for $\vx\in\overline{\Omega}$ so that $b(\vv,\ell)=\ell^2$ with $\vv_\ell$ satisfying
$$\|\vv_\ell \|_{H^s(\Gamma)}=C(\Omega,s)|\ell| \quad \text{and} \quad \|\nabla\vv_\ell \|_{L^2(\Omega)}=(d|\Omega|)^{\frac{1}{2}}|\ell|$$
for some positive constant $C(\Omega,s)$ depending only on $\Omega$ and $s$.

We now set $\vv:=\vz+\vv_\tau+\vv_\ell$ and $q:=-\tau$ to get
$$b(\vv,\tau)+b(\vz,q)+b(\vv,\ell) \ge \beta\eta\|\tau\|_{L^2(\Omega)}^2+\ell^2.$$
Using Cauchy-Schwarz and Young's inequalities, we infer that for any $\delta,\alpha>0$ we have
\begin{equation*}
    \begin{split}
k(\vv,\vz)&\ge \left(2-\frac{1}{\delta}-\frac{1}{\alpha}\right)\|\veps(\vz)\|_{L^2(\Omega)}^2-\alpha \|\veps(\vv_\tau)\|_{L^2(\Omega)}^2-\delta d^2=\left(2-\frac{1}{\delta}-\frac{1}{\alpha}\right)\|\veps(\vz)\|_{L^2(\Omega)}^2-\alpha \|\tau\|_{L^2(\Omega)}^2-\delta d^2.
    \end{split}
\end{equation*}
Combining the last two relations yields
$$\mathcal{B}((\vv,q),(\vz,\tau,\ell))\ge \left(2-\frac{1}{\delta}-\frac{1}{\alpha}\right)\|\veps(\vz)\|_{L^2(\Omega)}^2+(\beta\eta-\alpha\eta^2)\|\tau\|_{L^2(\Omega)}^2+(1-\delta)\ell^2.$$
Choosing for instance, $\delta=3/4$, $\alpha=2$ and $\eta=\beta/4$, and invoking the Korn's inequality
$$
\| \nabla \vv \|_{L^2(\Omega)} \leq C_K \| \veps(\vv)\|\qquad \forall\, \vv \in H^1_0(\Omega)^d,
$$
where $C_K$ only depends on $\Omega$, proves the first relation in \eqref{eqn:C1C2}.
It remains to show the second relation in \eqref{eqn:C1C2}. We estimate
\begin{align*}
    \|(\vv,q)\|_{X_0^s}^2 &=\|\nabla(\vz+\vv_\tau+\vv_\ell)\|_{L^2(\Omega)}^2+\|\tau\|_{L^2(\Omega)}^2+\|\vv_\ell)\|_{H^s(\Omega)}^2 \\
    & \le 3\left(\|\nabla\vz\|_{L^2(\Omega)}^2+\|\nabla\vv_\tau\|_{L^2(\Omega)}^2+\|\nabla \vv_\ell\|_{L^2_0(\Omega)}^2\right)+\|\tau\|_{L^2(\Omega)}^2+C(\Omega,s)^2\ell^2
\end{align*}
and the second estimate in \eqref{eqn:C1C2} follows from the scaling $\| \nabla \vv_\tau \|_{L^2(\Omega)}=\eta \| \tau \|_{L^2(\Omega)}$ and $\|\nabla \vv_\ell\|_{L^2_0(\Omega)} = (d|\Omega|)^{\frac 1 2}\ell$.

\bibliography{biblio}
\bibliographystyle{abbrv} % {amsplain} %{amsalpha}

\end{document}